\documentclass[a4paper,10pt,reqno]{amsart} %
\parskip1mm

\usepackage{latexsym,amssymb}
\usepackage{hyperref}
\usepackage{amsmath,amsthm,amsxtra}
\usepackage{amsfonts}
\usepackage[american]{babel}
\usepackage{mathrsfs}
\usepackage{psfrag}
\usepackage{epsfig,inputenc}
\usepackage{graphpap,latexsym,epsf}
\usepackage{color}
\usepackage{amssymb,eucal,paralist,color,enumerate}
\usepackage{graphicx}


\setlength{\voffset}{-.7truein} \setlength{\textheight}{8.8truein}
\setlength{\textwidth}{6.05truein} \setlength{\hoffset}{-.7truein}

\addtolength{\hoffset}{-0,5cm} \addtolength{\textwidth}{1,5cm}


\newcommand{\ep}{\varepsilon}
\newcommand{\R}{\mathbb{R}}
\newcommand{\N}{\mathbb{N}}

\newcommand{\pa}{\partial}

\mathchardef\emptyset="001F

\newtheorem{maintheorem}{Theorem}
\newtheorem{maindefinition}{Definition}
\newtheorem{theorem}{Theorem}[section]
\newtheorem{prop}[theorem]{Proposition}

\newtheorem{lemma}[theorem]{Lemma}
\newtheorem{remark}[theorem]{Remark}

\newtheorem{definition}[theorem]{Definition}
\newtheorem{proposition}[theorem]{Proposition}

\newtheorem{notation}[theorem]{Notation}

\numberwithin{equation}{section}


\newcommand{\eps}{\varepsilon}




\newcommand{\down}{\downarrow}
\newcommand{\weaksto}{\rightharpoonup^*}

\newcommand{\AC}{\mathrm{AC}}

\newcommand{\Hilbert}{\xfin}
\newcommand{\dualoperator}

  \def\calC{{\mathcal C}}
 \def\calE{{\mathcal E}} 
\def\calG{{\mathcal G}}


  \def\rmC{{\mathrm C}}
\def\rmD{{\mathrm D}}

\def\dd{\;\!\mathrm{d}} 

\newcommand{\pairing}[4]{ \sideset{_{ #1 }}{_{ #2 }}  {\mathop{\langle #3 , #4
\rangle}}}

\newcommand{\teta}{\vartheta}

\newcommand{\nchi}{{\raise.2ex\hbox{$\chi$}}}

\definecolor{ddcyan}{rgb}{0,0.1,0.9}
\definecolor{ddmagenta}{rgb}{0.8,0,0.8}
\definecolor{orange}{rgb}{0.6,0.2,0}


\newcommand{\piecewiseConstant}[2]{\overline{#1}_{\kern-1pt#2}}


\newcommand{\foraa}{\text{for a.a. }}

\newcommand{\cE}{\mathcal{E}}
\newcommand{\cg}[1]{\mathcal{G}(#1)}
\newcommand{\ene}[2]{\mathcal{E}_{#1}(#2)}
\newcommand{\subl}[1]{\mathcal{S}_{#1}}
\newcommand{\pt}[2]{\mathcal{P}_{#1}(#2)}
\newcommand{\Ptname}{\mathcal{P}}
\newcommand{\admis}[3]{\mathscr{A}_{#1,#2}^{#3}}
\newcommand{\domainenergy}{\xfin}
\newcommand{\la}{\langle}
\newcommand{\ra}{\rangle}
\newcommand{\cost}[3]{\mathsf{c}_{#1}(#2;#3)}
\newcommand{\costname}[1]{\mathsf{c}_{#1}}

\newcommand{\limen}{\mathscr{E}}
\newcommand{\limp}{\mathscr{P}}
\newcommand{\BV}{\mathrm{BV}}

 \def\trait #1 #2 #3 {\vrule width #1pt height #2pt depth #3pt}

 \def\fin{\hfill
         \trait .3 5 0
         \trait 5 .3 0
         \kern-5pt
         \trait 5 5 -4.7
         \trait 0.3 5 0
 \medskip}
 
\newcommand{\QED}{\mbox{}\hfill\rule{5pt}{5pt}\medskip\par}

\newcommand{\mdt}[3]{\|{#1}'_{#2}(#3)\|}
\newcommand{\mdtq}[4]{\|{#1}'_{#2}(#3)\|^{#4}}

\newcommand{\minpartial}[3]{\|\rmD {#1}_{#2}(#3)\|}

\newcommand{\minpartialq}[3]{\|\rmD {#1}_{#2}(#3)\|^2}
\newcommand{\argminpartial}[3]{\rmD{#1}_{#2}(#3)}

\newcommand{\pij}[2]{\pi_{#2}(#1)}

\newcommand{\gder}[4]{\mathrm{D}^{#1} {#2}_{#3}(#4)}

\newcommand{\NN}{\mathrm{N}}


\newcommand{\xfin}{X}




\newenvironment{rcomm}{\color{red}}{\color{black}}

\newenvironment{rnew}{\color{ddmagenta}}{\color{black}}

\newcommand{\berin}{\begin{rnew}}
\newcommand{\erin}{\end{rnew}}

\newcommand{\beroc}{\begin{rcomm}}
\newcommand{\eroc}{\end{rcomm}}

\newenvironment{newricky}{\color{ddcyan}}{\color{black}}
\newcommand{\bnr}{\begin{newricky}}
\newcommand{\enr}{\end{newricky}}


\definecolor{vgreen}{rgb}{0.1,0.5,0.2}


\title[Singularly perturbed gradient flows]{Singular vanishing-viscosity limits of gradient flows:\\ 
the finite-dimensional case}
\author{Virginia Agostiniani}
\address{V.\ Agostiniani, SISSA, via Bonomea 265, 34136 Trieste - Italy} 
\email{vagostin\,@\,sissa.it}

\author{Riccarda Rossi}
\address{R.\ Rossi, DIMI, Universit\`a degli studi di Brescia, via Branze 38, 25133 Brescia - Italy}
\email{riccarda.rossi\,@\,unibs.it}

\thanks{V.A.\ has been partially funded by the 
European Research Council / ERC Advanced Grant ${n^o}\ 340685$.
R.R.\ has been partially supported by a MIUR-PRIN'10-11 grant
for the project ``Calculus of Variations''. V.A.\ and  R.R.\  also acknowledge
support from the  Gruppo Nazionale per  l'Analisi Matematica, la
Probabilit\`a  e le loro Applicazioni (GNAMPA) of the Istituto Nazionale di Alta Matematica (INdAM)}

\date{November 23, 2016}
\begin{document}

\begin{abstract}
In this note
we study the singular vanishing-viscosity limit of a gradient flow set in a finite-dimensional Hilbert space and driven by a smooth, but possibly \emph{nonconvex},
time-dependent energy functional. We resort to  ideas and techniques from the variational approach to gradient flows and rate-independent evolution to show that, under suitable assumptions,
the solutions to the singularly perturbed problem converge to a curve of stationary points of the energy, whose   behavior at jump points is characterized in terms of the notion of \emph{Dissipative Viscosity} solution. We also provide sufficient conditions under which  Dissipative Viscosity solutions enjoy better properties, which  turn them  into   \emph{Balanced Viscosity} solutions. Finally, we discuss the \emph{generic character} of our assumptions. 
\end{abstract} 

\maketitle


\section{Introduction}

 We   address the singular  limit, 
as $\eps \downarrow 0$, of the gradient flow equation
\begin{equation}
\label{e:sing-perturb}
\ep u'(t)+\rmD\mathcal E_t(u(t))
= 0 
\qquad \text{in $\xfin$}  \quad \foraa\, 
t \in [0,T].
\end{equation}
Here, $(\xfin, \| \cdot\|)$ is a finite-dimensional Hilbert space,  
and the driving energy functional $\calE$ is \emph{smooth}, i.e. 
\begin{equation}
\label{Ezero}
\tag{$\mathrm{E}_0$}
\calE \in \rmC^1([0,T]\times \xfin)
\end{equation}
($\rmD\calE$ denoting the differential with respect to the variable $u$), but we allow for the  mapping  $u\mapsto \ene tu$ to be \emph{nonconvex}.
In this paper we aim to enucleate the basic ideas underlying a 
\emph{novel, variational} approach to this singular perturbation problem, 
partially inspired by the theory of \emph{Balanced Viscosity solutions} 
to rate-independent systems  \cite{EfMi06, MiRoSa09, MiRoSa, MRS13_prep}. 
Let us mention that this approach 
can be in fact adapted, and refined,  
to study the  singular limit \eqref{e:sing-perturb} 
in an  infinite-dimensional Hilbertian setting,  and with a possibly \emph{nonsmooth}, as well as \emph{nonconvex},  
driving energy functional  $\calE$, cf.\ the forthcoming \cite{lavorone}. 
The simpler setting considered in this paper enables us to illustrate the 
cornerstones of our analysis, unhampered by the technical issues related to nonsmoothness and infinite dimensionality. 
We will prove  the convergence as $\eps\down0$ of  (sequences of) solutions to (the Cauchy problem for) \eqref{e:sing-perturb},
to a curve $u:[0,T]\to \xfin$ of critical points for $\calE$, i.e.\ fulfilling the stationary problem
\begin{equation}
\label{e:lim-eq}
\rmD \ene t{u(t)} = 0 \quad \text{in } \xfin  \quad \foraa\, t \in[0,T].
\end{equation}
 The properties of $u$  will be codified by the two different notions of \emph{Dissipative Viscosity} and \emph{Balanced Viscosity} solution. 
 \par
 Before illustrating our results, let us  hint at the main analytical difficulties attached to the asymptotic analysis of \eqref{e:sing-perturb} as $\eps \down 0$, as well as  at the results available in the literature.  
In particular,  in the following lines we will focus on the case of    \emph{(uniformly) convex} energies, and of   
 energy functionals $\ene t{\cdot}$ complying with the \emph{transversality conditions}.
 Let us also briefly mention that new  results have emerged  in the recent \cite{Artina-et-al} for \emph{linearly constrained} evolution of critical points, based on a  constructive approach instead of the vanishing-viscosity analysis of \eqref{e:sing-perturb}. 
  
 \paragraph{\bf Preliminary considerations.}
Under suitable conditions,   for every fixed $\eps>0$ and
   for every $u_0 \in \xfin$  there exists at least a solution
  $u_\eps\in H^1 (0,T;\xfin)$ to the gradient flow \eqref{e:sing-perturb}, fulfilling
  the Cauchy condition $u_\eps(0)=u_0$.
   Testing \eqref{e:sing-perturb} by $u_\eps'$, integrating in time, and exploiting the chain rule for
  $\mathcal{E}$, 
  it is immediate to check that
  $u_\eps$ complies with the \emph{energy identity}
\begin{equation}
\label{enid-intro}
\int_s^t \eps \|u_\eps'(r)\|^2 \dd r  + \mathcal{E}_t (u_\eps(t)) = \mathcal{E}_s(u_\eps(s))+
\int_s^t \partial_t
\mathcal{E}_r(u_\eps(r))   \dd  r \quad \text{for all $ 0 \leq s \leq t \leq T$},
\end{equation}
balancing the dissipated energy $\int_s^t \eps \|u_\eps'(r)\|^2 \dd r $ with the stored energy and with  the work of the external forces $\int_s^t \partial_t
\mathcal{E}_r(u_\eps(r))   \dd  r$.
From  \eqref{enid-intro}
all the a priori estimates on a family $(u_\eps)_{\eps}$ of solutions can be deduced. 
More specifically, using the \emph{power control} condition $|\partial_t \ene tu|\leq C_1 
\ene tu +C_2$ for some $C_1,\, C_2>0$, 
via the Gronwall Lemma
one obtains
\begin{equation}
\label{prelim-estimates-intro}
\begin{aligned}
&
\text{\emph{(i)}}  && \text{
 The energy bound $\sup_{t\in (0,T)} \ene t{u_\eps(t)} \leq C $};
 \\
 &
\text{\emph{(ii)}}  && \text{
 The estimate $\int_0^T \eps \|u_\eps'(t) \|^2 \dd t\leq C'$,} 
\end{aligned}
\end{equation}
for positive constants $C, \, C'>0$ independent of $\eps>0$. 
While \emph{(i)}, joint with a suitable \emph{coercivity} condition on $\calE$ (typically, compactness of the energy sublevels), 
  yields   that there exists a \emph{compact} set  $K\subset\xfin$  s.t.\ 
$u_\eps(t) \in K$ for all $t\in [0,T]$ and $\eps>0$,  the \emph{equicontinuity} estimate provided
  by \emph{(ii)} \emph{degenerates} as $\eps\down 0$. Thus, no Arzel\`a-Ascoli type result applies to deduce compactness for $(u_\eps)_\eps$. 
 This is the  major difficulty 
  in the asymptotic analysis of \eqref{e:sing-perturb}.
   \par
   Let us point out that this obstruction  can be circumvented  by \emph{convexity} arguments.
  Indeed,  if 
  $\mathcal{E} \in \mathrm{C}^2 ([0,T]\times \xfin)$ with the mapping $u \mapsto \ene t{u}$ uniformly convex, 
  then, starting from any  
   $u_0 \in \xfin $ with $\mathrm{D} \mathcal{E}_0(u_0)=0$  and $\mathrm{D}^2  \mathcal{E}_0(u_0)$ 
(with $\mathrm{D}^2 \calE  $ the second order derivative of $\calE$ w.r.t.\ $u$) positive definite 
(then $u_0$ is a \emph{non-degenerate} 
critical point of $\ene 0{\cdot}$), it can be shown
there exists a \emph{unique} curve $u \in \mathrm{C}^1([0,T];\xfin)$ of stationary points,
   to which the \emph{whole} family $(u_\eps)_\eps$ converge as $\eps\down 0$, uniformly on $[0,T]$.
\par
  Therefore, it is indeed significant to focus  on the case in which
the energy  $u \mapsto\ene ty$   is allowed to be
\emph{nonconvex}.  In this context, two problems arise:
\begin{enumerate}
\item Prove that, up to the extraction of a subsequence,
the gradient flows $(u_\eps)_\eps$ converge 
as $\eps \down 0$ to some limit
curve $u$, pointwise in $[0,T]$;
\item
 Describe the evolution of $u$. Namely, one expects $u$
to be a curve of critical points, jumping 
at  \emph{degenerate}  critical points  for
$\mathcal{E}_t(\cdot)$. In this connection, one  aims to provide a thorough  description of the energetic behavior of $u$ at jump points.
\end{enumerate}
\paragraph{\bf Results for \emph{smooth} energies in finite dimension: the approach via the \emph{transversality conditions}.}
For the singular perturbation limit
 \eqref{e:sing-perturb}, a first answer to problems (1)\&(2)   was
 provided,  still  in finite dimension, 
  in
 \cite{Zanini},
 whose results were later extended to second order systems in \cite{agostiniani2012}. 
 The  key assumptions are  that the energy $\mathcal{E} \in \mathrm{C}^3 ([0,T]\times \xfin)$
 \begin{itemize}
 \item[\emph{(i)}]
 has a \emph{finite} number of degenerate critical points,
  \item[\emph{(ii)}]
  the vector field $\mathrm{F} := \mathrm{D} \mathcal {E}$ complies with the so-called
 \emph{transversality conditions} at every degenerate critical point,
 \end{itemize}
 and 
  a further technical condition.
  While postponing to Section \ref{s:5}  
  a discussion on the 
 \emph{transversality conditions}, 
well-known in the realm of bifurcation theory
(see, e.g., \cite{guck-holmes, haragus,vanderbauwhede})
we may mention here that, essentially, they 
prevent degenerate  critical points from being ``too singular".
  \par
  Then, in \cite[Thm.\ 3.7]{Zanini} it was shown that, starting from a ``well-prepared'' datum $u_0$,
  there exists a
 unique piecewise $\mathrm{C}^2$-curve
$u:[0,T] \to \xfin$ with a finite jump set $J= \{t_1,\, \ldots, \,
t_k\}$, such that: 
\begin{enumerate}
\item
 $\mathrm{D} \ene t{u(t)} = 0$ with
$\mathrm{D}^2 \ene t{u(t)}  $ positive definite
 for all
$t\in [t_i, t_{i-1})$ and $i =1,\ldots,k-1$;  
\item
 at every jump point $t_i\in J$, the left limit 
$u_-(t_i)$ is a  degenerate critical point for
$\mathcal{E}_{t_i}(\cdot)$ and there exists a  unique curve $v\in
\mathrm{C}^2(\R;\xfin)$  connecting $u_-(t_i)$ to the right limit $u_+(t_i)$, in the sense that 
$  \lim_{s \to -\infty} v(s) = u_-(t_i),$ $  \lim_{s \to +\infty} v(s) =
   u_+(t_i)$,  and fulfilling  
    \begin{equation}
   \label{heterocline}
   v'(s) + \mathrm{D} \mathcal{E}_{t_i} (v(s)) =0 \quad \text{for all } s\in \R;
   \end{equation}
\item
 the \emph{whole} sequence $(u_\eps)_\eps$ converge to $u$ uniformly
 on the  compact sets of $[0,T] {\setminus} J$, and suitable
 rescalings of $u_\eps$ converge to $v$.
 \end{enumerate}
Let us stress that 
the fact that at each jump point $t_i$ the unique 
heterocline $v $  connecting  the left and the right limits   
$u_-(t_i)$ and $ u_+(t_i)$,
which is a gradient flow of the energy
$\mathcal{E}_{t_i}(\cdot)$,
does bear a mechanical  interpretation, akin to the one for solutions to rate-independent processes obtained in the vanishing-viscosity limit of
\emph{viscous} gradient systems, cf.\  \cite{EfMi06, MiRoSa09, MiRoSa, MRS13_prep}. 
   Namely, one observes that   the
internal scale of the system,
 neglected in the singular limit $\eps \down 0$,
  ``takes over'' and governs the dynamics in the jump regime,
   which can be in fact viewed as a  fast transition between two metastable
   states.
\par
The structure of the statement in \cite{Zanini} reflects the line of its proof.
 First, the unique limit curve is \emph{a priori}
constructed via the Implicit Function Theorem, also resorting to the
\emph{transversality} conditions. Secondly,  the convergence of
$(u_\eps)_\eps$ is proved.
  \paragraph{\bf Our results.} 
In this paper,  we  aim to extend the result from \cite{Zanini} 
to a wider class of energy functionals, still smooth
in the sense of \eqref{Ezero} but not necessarily of class $\rmC^3$, 
and not necessarily complying with the transversality conditions. 
To this end,
we will address the singular perturbation problem from a different perspective. 
Combining ideas from the \emph{variational approach} to gradient flows, possibly driven by  nonsmooth and nonconvex  energies, cf.\ 
\cite{AGS08, MRS13, RossiSavare06}, 
with the  techniques for the vanishing-viscosity approximation of  
rate-independent systems  from   \cite{MiRoSa, MRS13_prep},
we will  prove the existence of a limit curve by refined \emph{compactness}. 
\emph{Variational} arguments 
will  lead to   a  suitable energetic 
characterization of  its fast
dynamics at jumps. 
Indeed, the flexibility of this approach will allow us to extend the results obtained in this paper, to the infinite-dimensional setting, 
and to nonsmooth energies, in the forthcoming \cite{lavorone}. 

\par
The starting point for our analysis is the key observation that,
using equation \eqref{e:sing-perturb} to the rewrite the 
contribution $\int_s^t\ep\|u_\eps' (r)\|^2  \dd r $ of the dissipated energy,
the energy identity \eqref{enid-intro} 
can be reformulated as 
\begin{equation}
\label{enid-intro-better}
\int_s^t \left( \frac\eps 2 \|u_\eps' (r)\|^2 {+}\frac1{2\eps}
\|\mathrm{D} \mathcal{E}_r(u_\eps (r))\|^2\right) \dd  r +
\mathcal{E}_t (u_\eps(t)) = \mathcal{E}_s(u_\eps(s))+\int_s^t
\partial_t
\mathcal{E}_r(u_\eps(r))   \dd  r
\end{equation}
for all $0\leq s \leq t \leq T$. In addition to 
estimates \eqref{prelim-estimates-intro}, 
from \eqref{enid-intro-better}  it is possible to deduce that
\begin{equation}
\label{en-diss-bound} 
\int_0^T 
\|\mathrm{D}\mathcal{E}_r(u_\eps (r))\| 
\|u_\eps'(r)\|
\dd r \leq C.
\end{equation}
 Thus, while no (uniform w.r.t.\ $\eps>0$) bounds are available on
 $\|u_\eps'\|$,   estimate
 \eqref{en-diss-bound} suggests that: 
 \begin{itemize}
 \item[\emph{(i)}]
  The limit
 of the \emph{energy-dissipation integral} $\int_s^t
\|\mathrm{D} \mathcal{E}_r(u_\eps (r))\|
\|u_\eps' (r)\|\dd r $
 will describe the dissipation of energy (at jumps) in the  limit $\eps \down
 0$;
 \item[\emph{(ii)}]
  To extract compactness information from the integral \eqref{en-diss-bound},
with  the \emph{degenerating} weight
  $\|\mathrm{D}
\mathcal{E}_r(u_\eps(r) )\|$, it is necessary to suppose that the
(degenerate) critical points of $\mathcal{E}$, in whose neighborhood 
this weight tends to zero,
 are somehow ``well
separated'' one from each other.
\end{itemize}

In fact,  in addition 
to the aforementioned coercivity  and power control conditions  on $\calE$, typical of  the variational approach to existence for  non-autonomous gradient systems, in order to prove our  results 
for the singular  limit \eqref{e:sing-perturb} we will 
 resort to
the condition that for every $t\in [0,T] $ the
critical set
\begin{equation}
\label{non-variat} 
\mathcal{C}(t) := \{ u \in \xfin\, :  \ 
\mathrm{D} \mathcal{E}_t(u) =0  \}  \text{ consists of
\emph{isolated points}}.
\end{equation}
This  will allow  us  to prove in  {\bf \underline{Theorem \ref{mainth:1}}},
that, up to a subsequence, the gradient flows $(u_\eps)_\eps$  pointwise converge to a 
solution $u $ of the limit problem \eqref{e:lim-eq}, defined at \emph{every} $t\in [0,T]$,  enjoying  the following properties:
\begin{enumerate}
\item
$u:[0,T]\to\xfin$  is \emph{regulated}, 
i.e.\ the left and right limits $u_-(t)$ and $u_+(t)$   exist at every $t\in (0,T)$, and so do the limits $u_+(0)$ and $u_-(T)$;
 \item $u$ fulfills the energy balance
\begin{subequations}
\label{diss-visc-intro}
\begin{equation}
 \label{lim-enid}
\mu([s,t])+ \mathcal{E}_t(u_+(t)) = \mathcal{E}_s(u_-(s))+\int_s^t
\partial_t \mathcal{E}_r(u(r))   \dd  r \quad \text{for all } 0 \leq s \leq t \leq T,
\end{equation}
with $\mu$ a positive Radon measure with an at most countable set 
$J$ of atoms;
\item 
$u$ is continuous on $[0,T]\setminus J$, and solves 
\[
\rmD \ene t{u(t)} =0 \quad \text{in } \xfin \text{ for all } t \in [0,T]\setminus J;
\]
\item
$J$ coincides with the 
jump set of $u$,  
and there hold the jump relations
\begin{align}
\label{jump-relation}   \mu(\{t\}) = \mathcal{E}_t(u_-(t)) -
\mathcal{E}_t(u_+(t)) = \mathsf{c}(t; u_-(t), u_+(t) ) \quad \text{for all }
t \in J\,.
\end{align}
\end{subequations}
\end{enumerate}
In \eqref{jump-relation}, the cost function $\mathsf{c} : [0,T] \times \xfin \times \xfin \to [0,+\infty)$ is defined by   minimizing the energy-dissipation integrals, namely 
\[
\mathsf{c}(t; u_-, u_+ ): = 
\inf\left\{ \int_0^1
\|\mathrm{D} \mathcal{E}_t (\teta(s))\|\|\teta' (s)\| \dd s\, : \ \teta \in    \admis{u_-}{u_+}{t} \right \} \qquad \text{ for } t \in [0,T], \  u_-, \, u_+ \in \xfin,
\]
over a suitable class  $ \admis{u_-}{u_+}{t} $ of \emph{admissible curves} connecting $u_-$ and $u_+$. 
These curves somehow capture the asymptotic behavior of the gradient flows $(u_\eps)_\eps$ on intervals shrinking to the jump point $t$. 
The jump relations 
provide  a description of the behavior of the limit curves at jumps: indeed, it is possible to
deduce from \eqref{jump-relation}  that
any   curve $\teta$  attaining the infimum in the definition of 
$\mathsf{c}(t; u_-(t), u_+(t)  )$, hereafter referred to as \emph{optimal jump transition}, 
can 
 be reparameterized to a  curve $\tilde \teta$  solving the analogue of \eqref{heterocline}, namely
\begin{equation}
\label{analogy}
\tilde{\teta}'(\sigma) +\mathrm{D} \mathcal{E}_t(\tilde{\teta}(\sigma)) = 0 \quad \text{in } \xfin.
\end{equation}
Thus, 
the notion of solution to \eqref{e:lim-eq} given by (1)--(4), 
hereafter referred to as  \emph{Dissipative Viscosity} solution,  bears
the same mechanical interpretation as the solution concept in \cite{Zanini}. 

\par
Using the results of \cite{genericity}, 
we also show that our condition \eqref{non-variat} 
on the critical points 
can be deduced from the \emph{transversality conditions}
assumed in \cite{Zanini}. In turn, as we will see, 
these conditions have a generic character.
 
\par
Our second main result, {\bf \underline{Theorem \ref{mainth:2}}},   
shows that if $\calE$ fulfills the following condition
 \begin{equation}
 \label{subdiff-intro}
 \limsup_{v\to u} \frac{\ene tv - \ene tu}{\| \rmD \ene tv \|} \geq 0 \quad \text{ at every } u \in \calC(t) \quad \text{ for all } t \in [0,T],
\end{equation}
then for every Dissipative Viscosity solution   the absolutely continuous and the Cantor part of the associated defect  measure $\mu$ are zero.
Hence, $u$ improves to a \emph{Balanced Viscosity}  
(in the sense of \cite{MiRoSa09, MRS13_prep}) solution of \eqref{e:lim-eq}.  
Observe (cf.\ Remark \ref{rmk:Loja}), that a sufficient condition  for \eqref{subdiff-intro} is that 
$\calE$ complies with the celebrated \L ojasiewicz inequality,
cf., e.g., \cite{BoDaLe,Loja1,Simon1983}, as well as 
the recent survey paper \cite{Col_Min}.


\paragraph{\bf Plan of the paper.} 
In Section \ref{s:2}
we enucleate our conditions    on the energy functional  $\calE$,  and then give the definition of \emph{admissible curve} connecting two points  and the induced notion 
of \emph{energy-dissipation cost} $\mathsf{c}$. 
  We then introduce the two notions of
\emph{Dissipative Viscosity} and \emph{Balanced Viscosity} solutions to \eqref{e:lim-eq} and finally state \textbf{\underline{Theorems \ref{mainth:1} \& \ref{mainth:2}}}. 
In Section \ref{s:ojt}  we gain further insight into the properties of optimal jump transitions. 
Section \ref{s:3} is devoted to the analysis of the asymptotic  behavior of   the energy-dissipation integrals in the vanishing-viscosity limit, and to the properties of  the cost $\mathsf{c}$. These results lie at the core of the proof of Theorem  \ref{mainth:1}, developed in Section \ref{s:4} together with the proof of Theorem \ref{mainth:2}.  
In  Section \ref{s:5} we present examples of energies complying with our set of assumptions. 
In particular, on the one hand we show that \eqref{non-variat} is guaranteed by the \emph{transversality conditions}, whose genericity is discussed. On the other hand, we introduce the class of subanalytic functions, which comply with the Lojasiewicz inequality, hence with \eqref{subdiff-intro}. 

\paragraph{\bf Acknowledgment.} 
We are extremely grateful to Giuseppe Savar\'e for suggesting this problem to us and for several enlightening discussions and suggestions. 


\section{Main results}
\label{s:2}

Preliminarily, let us fix some \textbf{general notation} that will be used throughout. 
As already mentioned in the introduction, $X$ is a finite-dimensional 
Hilbert space (although all of the results of this paper could be trivially
extended to the, still finite-dimensional,  Banach  framework), 
with inner product
$\pairing{}{}{\cdot}{\cdot}$. 
Given $x\in \xfin$ and $\rho>0$, we will denote by
$B(x,\rho)$ the open ball centered at $x$ with radius $\rho$. 
\par
We will denote by $\mathrm{B}([0,T];\xfin)$ the class of measurable, \underline{everywhere defined}, 
and bounded functions from $[0,T]$ to $\xfin$, 
whereas
$\mathrm{M}(0,T)$ stands for the set of Radon measures on   $[0,T]$.
\par
Finally,  the symbols $c,\,C, \, C',\hdots$ will be used
to denote a positive constant depending on given data, and possibly
varying from line to line.


\paragraph{\bf Basic conditions on $\calE$.}
In addition to \eqref{Ezero}, we will require 
\begin{description}
\item[Coercivity]  the map  
$ u\mapsto \cg u: = \sup_{t \in [0,T]} | \ene tu|$ fulfills
\begin{equation}
  \label{coercivita}
\tag{${\mathrm{E}_1}$} 
\forall\, \rho>0 \ \text{ the sublevel set }
 \subl{\rho} :=
\{ u \in \xfin\, : \ \cg{u}\leq\rho\}
\text{ is bounded.}
   \end{equation}


\item[Power control]
the partial time derivative $\partial_t \ene tu=:\pt tu$ fulfills 
\begin{equation}
    \label{P_t}
\tag{${\mathrm{E}_2}$}
\begin{aligned}
  & \exists \,C_1,\, C_2>0 \ \ \forall\, (t,u)\in
 [0,T]
    \, : \ \ |\pt tu|\leq C_1\ene tu + C_2.
    \end{aligned}
  \end{equation}
\end{description}
Observe that \eqref{P_t} in particular yields that $\calE$ is bounded from below.
In what follows, without loss of generality we will suppose that $\calE$ is nonnegative. 
A simple argument based on the Gronwall Lemma ensures that  
\begin{equation}
\label{gronw-conseq}
\cg u 
\,\leq\,
\exp(C_1 T) \left(  \inf_{t \in [0,T]}
\ene tu  + C_2\,T\right)\qquad \text{for all } u \in \xfin. 
\end{equation}

\par
Under these conditions, the existence of solutions to the gradient flow 
\eqref{e:sing-perturb} is classical.
Testing \eqref{e:sing-perturb}  by $u'$ and using the chain rule fulfilled by the (smooth) energy $\calE$ leads to the energy identity \eqref{eqn_lemma} below, which will be the starting point in the derivation of all our estimates for the singular perturbation  limit as $\eps\down 0$.

\begin{theorem}
\label{thm:exist-g-flow} 
Let $\cE: [0,T]\times \xfin \to[0,+\infty)$
comply with \eqref{Ezero}, \eqref{coercivita}, and
\eqref{P_t}. Then,  for
every $u_0 \in \xfin$ there exists $u\in
H^1(0,T;\Hilbert) $, with $u(0)=u_0$,  solving   \eqref{e:sing-perturb} and
fulfilling for every $0\leq s \leq t \leq T$ the energy identity
\begin{equation}
\label{eqn_lemma}
\int_s^t\left(\frac{\ep}2\|u'(r)\|^2
+\frac1{2\ep}\|\rmD \ene r{u(r)} \|^2 \right)\dd r
+\mathcal E_t(u(t))=
\mathcal E_s(u(s))
+\int_s^t\pt r{u(r)}\dd r.
\end{equation}
\end{theorem}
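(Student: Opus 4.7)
The plan is to combine a standard Peano-type local existence argument with the a priori bound that comes from the energy identity itself, so let me outline the three pieces.

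\textbf{Local existence.} Since $\calE\in \rmC^1([0,T]\times \xfin)$ by \eqref{Ezero}, the map $(t,u)\mapsto -\tfrac1\eps \rmD\ene tu$ is continuous on $[0,T]\times \xfin$, and since $\xfin$ is finite-dimensional, Peano's theorem yields, for every $u_0\in\xfin$, a maximal solution $u\in\rmC^1([0,T_*);\xfin)$ of \eqref{e:sing-perturb} with $u(0)=u_0$, for some $T_*\in(0,T]$. (We do not need Lipschitz regularity of $\rmD\calE$ and hence do not claim uniqueness here.)

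\textbf{Energy identity along the local solution.} Since $\calE\in\rmC^1$ and $u\in\rmC^1$, the classical chain rule gives
$\tfrac{\dd}{\dd r}\ene r{u(r)}=\pt r{u(r)}+\pairing{}{}{\rmD\ene r{u(r)}}{u'(r)}$
on $[0,T_*)$. Testing \eqref{e:sing-perturb} by $u'(r)$ gives $\pairing{}{}{\rmD\ene r{u(r)}}{u'(r)}=-\eps\|u'(r)\|^2$, so, integrating on $[s,t]\subset[0,T_*)$,
\begin{equation*}
\int_s^t \eps\|u'(r)\|^2\dd r + \ene t{u(t)} = \ene s{u(s)}+\int_s^t\pt r{u(r)}\dd r.
\end{equation*}
Crucially, on a solution of \eqref{e:sing-perturb} we have $\|\rmD\ene r{u(r)}\|=\eps\|u'(r)\|$, hence the elementary identity
$\eps\|u'(r)\|^2=\tfrac\eps 2\|u'(r)\|^2+\tfrac1{2\eps}\|\rmD\ene r{u(r)}\|^2$ holds pointwise, and the energy identity takes exactly the form \eqref{eqn_lemma}.

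\textbf{Global extension via the a priori estimate.} Since $\calE\geq 0$, dropping the dissipation term in the above identity and invoking \eqref{P_t} gives
\begin{equation*}
\ene t{u(t)}\leq \ene 0{u_0}+\int_0^t\bigl(C_1\ene r{u(r)}+C_2\bigr)\dd r\qquad \text{for all } t\in[0,T_*).
\end{equation*}
Gronwall's lemma yields $\ene t{u(t)}\leq \bigl(\ene 0{u_0}+C_2T\bigr)\rme^{C_1T}=:R$ uniformly on $[0,T_*)$. Then \eqref{gronw-conseq} gives a uniform bound for $\cg{u(t)}$ as well, so that by \eqref{coercivita} the trajectory $u([0,T_*))$ is contained in the bounded (hence compact) sublevel $\subl{R'}$ for some $R'$. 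This precludes blow-up, so the maximal solution necessarily satisfies $T_*=T$ and, extending by continuity if needed, $u\in\rmC^1([0,T];\xfin)\subset H^1(0,T;\xfin)$; the identity \eqref{eqn_lemma} propagates to all of $[0,T]$.

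The argument is essentially routine because $\xfin$ is finite-dimensional and $\calE$ is $\rmC^1$; the only step that uses the structural hypotheses in a nontrivial way is the global extension, where the interplay between the energy identity, the power control \eqref{P_t}, and the coercivity \eqref{coercivita} is what rules out finite-time blow-up. In the infinite-dimensional or nonsmooth setting announced in \cite{lavorone} this is precisely the step that would require genuine work (approximation schemes, chain rule inequalities, minimizing movements), but here Peano plus Gronwall suffice.
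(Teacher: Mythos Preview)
Your argument is correct and is exactly the standard route the paper has in mind: the paper does not actually spell out a proof of this theorem, remarking only that existence is ``classical'' and that testing \eqref{e:sing-perturb} by $u'$ together with the chain rule yields \eqref{eqn_lemma}. Your Peano-plus-Gronwall scheme, with the pointwise identity $\eps\|u'\|^2=\tfrac\eps2\|u'\|^2+\tfrac1{2\eps}\|\rmD\calE\|^2$ along solutions, is precisely the classical argument being invoked.
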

\paragraph{\bf A condition on the critical points of $\calE$.}
In what follows,  we will denote the set of the critical points of 
$\ene t{\cdot}$, for fixed $t\in [0,T]$, by
\[
\calC(t): = \big\{ u \in \xfin\, : \ \rmD \ene tu=0\big\},
\]
and assume that
 \begin{equation}
  \label{strong-critical}
  \tag{$\mathrm{E}_3$}
  \begin{aligned}
\text{for every } t \in [0,T]  \quad \text{the set } \calC(t) \text{ consists of isolated points}.
    \end{aligned}
  \end{equation}
We postpone to Section \ref{s:5} a discussion on sufficient conditions  
for \eqref{strong-critical}, as well as on its \emph{generic} character. 


\paragraph{\bf Solution concepts.}
\label{ss:2.2-added}
We now illustrate the 
two notions of evolution of curves of critical points that we will obtain in the limit passage as $\eps\downarrow 0$. 
Preliminarily, we need to give 
the definitions of \emph{admissible curve} and of \emph{energy-dissipation cost}, obtained by minimizing the energy-dissipation integrals
along admissible curves. 
The latter notion somehow encodes the asymptotic properties of 
(the energy-dissipation integrals along) sequences of absolutely 
continuous curves (in fact, the solutions 
of our gradient flow equation), 
considered on intervals shrinking to a point $t\in [0,T]$, cf.\ Proposition \ref{super-lemmone} ahead. Basically, admissible curves are piecewise locally Lipschitz curves joining critical points. Note however that we do not impose that their end-points be critical. That is why, we choose to confine our definition to the case the end-points are different: otherwise, we should have to allow for curves degenerating to a single, possibly non-critical, point, which would not be consistent with \eqref{def-admissible-class-simpler} below. 

\begin{definition}
\label{main-def-1}
Let $t \in [0,T]$ and $u_1,\, u_2 \in \xfin$ be fixed.
\begin{enumerate}
\item
In the case  $u_1\neq u_2$, we call a curve 
$\teta\in\mathrm{C}
([0,1];\Hilbert)$
with $\teta(0) = u_1$ and $\teta(1) =
u_2$ \emph{admissible}  if there exists a partition 
$0=\mathsf{t}_0 < \mathsf{t}_1< \ldots <
\mathsf{t}_j=1$ such that 
\begin{equation}
\label{def-admissible-class-simpler}
\begin{aligned}
  &
       \teta|_{(\mathsf{t}_i,\mathsf{t}_{i+1})} \in
\mathrm{C}_{\mathrm{loc}}^{\mathrm{lip}}
((\mathsf{t}_i,\mathsf{t}_{i+1});\Hilbert) \text{ for all }
i=0,\ldots, j-1,
\\
& 
 \teta(\mathsf{t}_i) \in \mathcal{C}(t) \text{ for all } i\in\{1,\ldots, j-1\},
 \quad
\teta(r) \notin \mathcal{C}(t) \
 \forall\, r \in (\mathsf{t}_i,\mathsf{t}_{i+1}) \text{ for all } i=0,\ldots, j-1.
\end{aligned}
\end{equation}
We will denote by $ \admis{u_1}{u_2}{t} $ the class of admissible curves connecting $u_1$ and $u_2$ at time $t$. 
Furthermore, for a given $\rho>0$ we will use the notation
\[
 \admis{u_1}{u_2}{t,\rho}: = 
 \Big\{ \teta \in \admis{u_1}{u_2}{t} \, : \ \teta(s) \in \subl \rho \text{ for all } s \in [0,1]\Big\}.
\]
\item We define the \emph{energy-dissipation cost}
\begin{equation}
\label{costo-simpler}
\cost t{u_1}{u_2}:=
\left\{
\begin{array}{ll}
\inf\left\{\int_0^1 \minpartial \calE t {\teta(s)} \| \teta'(s)\|\dd
s \,:\,\teta\in
\admis{u_1}{u_2}{t}\right\} & \quad\mbox{if }\quad u_1\neq u_2,\
\\
0 & \quad\mbox{if }\quad u_1= u_2. 
\end{array}
\right.
\end{equation}
\end{enumerate}
\end{definition}
We call 
$\int_0^1 \minpartial \calE t {\teta(s)} \| \teta'(s)\|\dd s$,
for some 
$\teta\in\admis{u_1}{u_2}{t}$,
 \emph{energy-dissipation integral}.
Observe that, up to a reparameterization, every absolutely continuous curve
$\teta\in\mathrm{AC}([a,b];\Hilbert)$ such that $
\exists\,\,\,a=\mathsf{t}_0 < \mathsf{t}_1< \ldots < \mathsf{t}_j=b
$ with
\[
 \teta(\mathsf{t}_i) \in \mathcal{C}(t), \qquad   \teta(\mathsf{t}_i) \neq  \teta(\mathsf{t}_j), \qquad 
\teta(r) \notin \mathcal{C}(t) \
 \forall\, r \in (\mathsf{t}_i,\mathsf{t}_{i+1}) 
 \]
 for all $i,\, k \in \{1, \ldots, j-1\}$, $i\neq k$, 
is an admissible curve.
Note that the chain-rule holds along
\emph{admissible} curves $\teta$
with finite energy-dissipation integral at time $t$. 
This is the content of the following lemma,
which can be easily proved.

\begin{lemma}
\label{chain_facile}
Let
$t\in[0,T]$ be fixed, and
$\teta\in \admis{u_1}{u_2}{t}$ 
be an admissible curve connecting $u_1$ and $u_2$ such that
\begin{equation*}
\int_0^1 \minpartial \calE t {\teta(s)} \| \teta'(s)\|\dd s<\infty.
\end{equation*}
Then,
the map $s\mapsto \ene t{\teta(s)}$ belongs to
$\AC([0,1])$ and there holds
the chain rule
\begin{equation}
\label{ch-rule-admiss}
\begin{gathered}
\frac{\mathrm{d}}{\mathrm{d}s}\ene t{\teta(s)}
=
\langle\rmD \ene t{\teta(s)}, \teta'(s) \rangle
 \quad \text{ for a.a.} \, s\in  (0,1)\,.
\end{gathered}
\end{equation}
\end{lemma}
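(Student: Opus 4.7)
The statement is essentially a piecewise AC chain rule, so the plan is to establish the chain rule on each subinterval $(\mathsf{t}_i, \mathsf{t}_{i+1})$ where $\teta$ is locally Lipschitz, and then glue across the partition points $\mathsf{t}_i$ using the continuity of $\teta$ on $[0,1]$ together with the criticality $\rmD\ene{t}{\teta(\mathsf{t}_i)}=0$ and the $L^1$--integrability of the energy-dissipation integrand.

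First, I fix $i \in \{0,\ldots,j-1\}$ and any compact subinterval $[a,b]\subset (\mathsf{t}_i,\mathsf{t}_{i+1})$. Since $\teta|_{(\mathsf{t}_i,\mathsf{t}_{i+1})} \in \mathrm{C}^{\mathrm{lip}}_{\mathrm{loc}}$, $\teta$ is Lipschitz on $[a,b]$, and because $\calE \in \mathrm{C}^1([0,T]\times X)$ by \eqref{Ezero}, the composition $s\mapsto \ene{t}{\teta(s)}$ is Lipschitz on $[a,b]$. The classical chain rule in finite-dimensional Hilbert space then gives
\begin{equation*}
\ene{t}{\teta(b)}-\ene{t}{\teta(a)} = \int_a^b \langle \rmD\ene{t}{\teta(s)}, \teta'(s)\rangle\,\dd s,
\end{equation*}
with $\tfrac{\rmd}{\rmd s}\ene{t}{\teta(s)} = \langle\rmD\ene{t}{\teta(s)}, \teta'(s)\rangle$ for a.a.\ $s\in(a,b)$.

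Next, I pass to the limit $a\down \mathsf{t}_i$ and $b\up \mathsf{t}_{i+1}$. The left-hand side converges to $\ene{t}{\teta(\mathsf{t}_{i+1})}-\ene{t}{\teta(\mathsf{t}_i)}$ by continuity of $\teta\in \mathrm{C}([0,1];X)$ and of $\calE$. For the right-hand side I use the Cauchy--Schwarz bound
\begin{equation*}
|\langle\rmD\ene{t}{\teta(s)},\teta'(s)\rangle| \le \|\rmD\ene{t}{\teta(s)}\|\,\|\teta'(s)\|,
\end{equation*}
which by hypothesis is in $L^1(0,1)$; dominated convergence then yields
\begin{equation*}
\ene{t}{\teta(\mathsf{t}_{i+1})}-\ene{t}{\teta(\mathsf{t}_i)} = \int_{\mathsf{t}_i}^{\mathsf{t}_{i+1}} \langle \rmD\ene{t}{\teta(s)}, \teta'(s)\rangle\,\dd s.
\end{equation*}

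Finally, I define $G(s):=\int_0^s \langle\rmD\ene{t}{\teta(r)},\teta'(r)\rangle\,\dd r$, which is absolutely continuous on $[0,1]$. Summing the previous identity telescopically over $i=0,\ldots,j-1$ (and combining with the chain rule on arbitrary compact subintervals of each $(\mathsf{t}_i,\mathsf{t}_{i+1})$) shows that for every $s\in [0,1]$
\begin{equation*}
\ene{t}{\teta(s)}-\ene{t}{\teta(0)} = G(s),
\end{equation*}
so $s\mapsto \ene{t}{\teta(s)}$ belongs to $\AC([0,1])$ and its a.e.\ derivative agrees with $\langle\rmD\ene{t}{\teta(s)},\teta'(s)\rangle$, proving \eqref{ch-rule-admiss}.

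The only subtle step is the gluing across the interior partition points $\mathsf{t}_i$: $\teta$ need not be (even one-sidedly) Lipschitz there, but the criticality $\teta(\mathsf{t}_i)\in \calC(t)$ makes $\rmD\ene{t}{\teta(\mathsf{t}_i)}=0$, so no boundary contribution arises, and the $L^1$ hypothesis on the energy-dissipation integrand supplies the dominating function needed to pass to the limit in the integral.
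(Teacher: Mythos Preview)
Your argument is correct and supplies exactly the details the paper omits: the paper does not prove this lemma at all, stating only that it ``can be easily proved.'' Your piecewise approach---classical chain rule on compact subintervals of each $(\mathsf{t}_i,\mathsf{t}_{i+1})$, then dominated convergence via the $L^1$ energy-dissipation bound to extend to the closed subintervals, then telescoping---is the natural way to fill this in. One small remark: in your final paragraph you stress the criticality $\teta(\mathsf{t}_i)\in\calC(t)$ as the reason ``no boundary contribution arises,'' but in fact your argument never uses this; the gluing works purely from continuity of $\teta$ and $\calE$ together with the $L^1$ hypothesis on $\|\rmD\ene t{\teta(\cdot)}\|\,\|\teta'(\cdot)\|$, and would go through equally well at a non-critical interior node.
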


\par
The following result, whose proof is postponed to Section \ref{s:3}, 
collects the properties of the cost
$\costname{}{}$.

\begin{theorem}
\label{prop:cost}
Assume \eqref{Ezero}--\eqref{strong-critical}. 
Then, for every $t \in [0,T]$
and $u_1,\, u_2\in\xfin$ we have:
\begin{compactenum}
\item 
$\cost{t}{u_1}{u_2} =0$ if and only if 
$u_1 = u_2 $;
\item $\costname{t}$ is symmetric;
\item if  $\cost{t}{u_1}{u_2} >0$,    
there exists an optimal curve $\teta\in\admis{u_1}{u_2}{t}$ 
attaining the  $\inf$ in \eqref{costo-simpler};
\item for every $u_3\in\mathcal C(t)$,
the triangle inequality holds
\begin{equation}
\label{prop:triangle}
\cost{t}{u_1}{u_2}
\leq
\cost{t}{u_1}{u_3} + \cost{t}{u_3}{u_2};
\end{equation}
\item there holds
\begin{equation}\label{charact-cost_AC}
\begin{aligned}
   \cost{t}{u_1}{u_2}\leq\inf\Big\{ & \liminf_{n \to
\infty}\int_{t_1^n}^{t_2^n}\|\rmD\mathcal
E_s(\teta_n(s))\| \|\teta_n'(s)\| \dd s\,:\\ & \teta_n\in
\AC([t_1^n,t_2^n];X),\ t_i^n\to t, \ \teta_n(t_i^n)\to u_i \text{ for }i=1,2
\Big\}\,;
\end{aligned}
\end{equation} 
\item the following lower semicontinuity property holds
\begin{equation*}
  (u_1^k, u_2^k)\to (u_1,u_2)
\text{ as } k \to \infty
\quad\Rightarrow\quad
\liminf_{k \to \infty}
 \cost{t}{u_1^k}{u_2^k} 
\,\geq\,
\cost{t}{u_1}{u_2}.
\end{equation*}
\end{compactenum}
\end{theorem}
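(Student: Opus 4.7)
The plan is to handle the six properties with a combination of direct constructions, concatenation, and the direct method of the calculus of variations; the technical core lies in property (3). For (1), the ``if'' direction is immediate; for the converse, given $u_1\ne u_2$, exploit \eqref{strong-critical} to choose $\rho>0$ so small that $B(u_1,2\rho)\cap\calC(t)\subseteq\{u_1\}$ and $\|u_1-u_2\|>2\rho$. On the compact annulus $A:=\{u\in\xfin : \rho\le\|u-u_1\|\le 2\rho\}$, which by construction is disjoint from $\calC(t)$, one has $\delta:=\min_A\|\rmD\calE_t\|>0$. Any admissible curve $\teta$ from $u_1$ to $u_2$ must traverse $A$ on a parameter subinterval $[s_0,s_1]$ with $\|\teta(s_1)-\teta(s_0)\|\ge\rho$, yielding $\int_0^1\|\rmD\calE_t(\teta)\|\|\teta'\|\,ds\ge\delta\rho>0$. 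Property (2) follows from the reversal $s\mapsto 1-s$, while (4) is obtained by concatenating $\teta_1\in\admis{u_1}{u_3}{t}$ and $\teta_2\in\admis{u_3}{u_2}{t}$, which is admissible precisely because $u_3\in\calC(t)$, so the join point is a legitimate partition element.

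The technical heart is (3). Take a minimizing sequence $(\teta_n)\subset\admis{u_1}{u_2}{t}$. Lemma \ref{chain_facile} applied to $\teta_n$ gives $\ene t{\teta_n(s)}\le\ene t{u_1}+I_n$ with $I_n\to\cost{t}{u_1}{u_2}$, so by \eqref{coercivita} all $\teta_n$ remain in a fixed compact sublevel set $\subl{R}$. Because $\subl{R}\cap\calC(t)$ is finite (discrete in a compact), and by the annulus estimate used in (1) each excursion between consecutive distinct critical points carries a uniform positive cost, the number $j_n$ of partition segments is bounded; passing to a subsequence we stabilize both $j_n\equiv j$ and the list of visited critical points $v_0=u_1,v_1,\ldots,v_{j-1},v_j=u_2$. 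It then suffices to construct an optimal transition on each segment separately and concatenate. The main obstacle here is that admissible curves may have unbounded length when spiralling into critical endpoints, so naive arclength parameterization fails to yield Arzel\`a--Ascoli compactness. The remedy is to use a modified parameterization $d\sigma=(\|\rmD\calE_t(\teta)\|+\chi)\|\teta'\|\,ds$ with a cutoff $\chi$ supported away from the critical endpoints: on every compact subinterval the reparameterized curves are uniformly Lipschitz, so a diagonal Arzel\`a--Ascoli argument produces a locally uniform limit, and Fatou combined with continuity of $\rmD\calE_t$ gives lower semicontinuity of the integrand, hence optimality of the limit.

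Properties (5) and (6) follow by short ``bridging'' arguments. For (5), rescale each $\teta_n$ to $[0,1]$, graft short linear arcs connecting $u_1$ to $\teta_n(t_1^n)$ and $\teta_n(t_2^n)$ to $u_2$ (which for $n$ large avoid $\calC(t)$ off their critical endpoints by \eqref{strong-critical}), and insert partition points at the finitely many substantial interior visits to $\calC(t)$; uniform continuity of $\rmD\calE$ on compacts shows that the cost of the resulting admissible curve differs from $\int_{t_1^n}^{t_2^n}\|\rmD\calE_s(\teta_n)\|\|\teta_n'\|\,ds$ by $o(1)$ as $n\to\infty$, giving the desired upper bound on $\cost{t}{u_1}{u_2}$. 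Property (6) is analogous: given optimal transitions $\teta_k\in\admis{u_1^k}{u_2^k}{t}$ supplied by (3), append short bridging arcs connecting $u_1$ to $u_1^k$ and $u_2^k$ to $u_2$, admissible for large $k$ by the same \eqref{strong-critical} argument and of vanishing added cost, thus upgrading $\teta_k$ to an element of $\admis{u_1}{u_2}{t}$ and yielding $\cost{t}{u_1}{u_2}\le\liminf_k\cost{t}{u_1^k}{u_2^k}$.
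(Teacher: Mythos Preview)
Your handling of (1), (2), and (4) is fine and matches the paper. The serious gap is in (5). Your bridging argument fails for two independent reasons. First, the curves $\teta_n\in\AC([t_1^n,t_2^n];X)$ are \emph{arbitrary} absolutely continuous curves; the set $\{s:\teta_n(s)\in\calC(t)\}$ can be any closed subset of the parameter interval (a Cantor set, a nontrivial interval, \ldots), so ``inserting partition points at the finitely many substantial interior visits to $\calC(t)$'' is not well defined and cannot in general yield an admissible curve. Second---and this is decisive---your uniform-continuity step for passing from $\|\rmD\calE_s\|$ to $\|\rmD\calE_t\|$ gives an error bounded by $\sup_{s\in[t_1^n,t_2^n]}\|\rmD\calE_s-\rmD\calE_t\|_{L^\infty(K)}\cdot\int_{t_1^n}^{t_2^n}\|\teta_n'\|\,ds$, and nothing controls the total length $\int\|\teta_n'\|$: only the \emph{weighted} length $\int\|\rmD\calE_s(\teta_n)\|\|\teta_n'\|\,ds$ is bounded, and the weight may vanish near critical points while the curve accumulates unbounded length there.

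The paper never tries to modify the $\teta_n$. Instead it proves a separate compactness result (Proposition~\ref{super-lemmone}): reparameterizing via $s_n(r)=r+\int_{t_1^n}^r\|\rmD\calE_\tau(\teta_n)\|\|\teta_n'\|\,d\tau$ yields $\|\rmD\calE_{r_n(s)}(\tilde\teta_n)\|\|\tilde\teta_n'\|\le 1$, hence equi-Lipschitz bounds on the compacts $K_\delta=\subl{\rho}\setminus\bigcup_{x\in(\calC(t)\cap\subl{\rho})\cup\{u_1,u_2\}}B(x,\delta)$; a careful diagonal argument in $\delta$ (controlling the number of excursions between distinct balls uniformly in $n$ and $\delta$) together with Ioffe's theorem then \emph{constructs} an admissible limit curve satisfying the $\liminf$-inequality. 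Properties (3), (5), and (6) are then one-line consequences of this proposition and its fixed-time analogue, Proposition~\ref{lemma:extension-lemmone}. Your sketch of (3) is in the right spirit but the modified parameterization $d\sigma=(\|\rmD\calE_t(\teta)\|+\chi)\|\teta'\|\,ds$ is too vague: if $\chi$ vanishes only near the two segment endpoints, the reparameterized curves are \emph{not} uniformly Lipschitz on compact subintervals, since the trajectory may still approach the \emph{other} critical points in $\subl{R}$, where both $\|\rmD\calE_t\|$ and $\chi$ are small. The paper's choice, and the associated $K_\delta$-localization, are precisely what is needed.
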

\par
We are now in the position to give the definition of \emph{Dissipative Viscosity} solution to equation \eqref{e:lim-eq}. 

\begin{maindefinition}[Dissipative Viscosity solution]
\label{def:sols-notions-1}
We call \emph{Dissipative Viscosity solution} to \eqref{e:lim-eq}  a
curve $u\in \mathrm{B}( [0,T] ; \Hilbert)$ such that
\begin{enumerate}
\item 
for every $0\leq t<T$ and every $0<s\leq T$,
the left and right limits
$ u_-(s) : = \lim_{\tau\uparrow s} u(\tau)$
and 
$u_+(t) : = \lim_{\tau\downarrow t} u(\tau)$
exist,
there exists a positive Radon measure $\mu \in \mathrm{M}(0,T)$ 
such that the set  $J$  of its atoms is countable, 
and $(u,\mu)$ fulfill the energy identity
\begin{align}
\label{to-save-eneq}
\mu([s,t])  +\ene t{u_+(t)}  &=\ene s{u_-(s)} + \int_s^t \pt r{u(r)} \dd r    \quad \text{for all }  0\leq s \leq t \leq T,
\end{align}
where we understand $u_-(0): = u(0)$ and $u_+(T): = u(T)$; 
\item
$u$ is  continuous on the set $[0,T]\setminus J$, and
\label{to-save-def}
\begin{align}
&
\label{to-save-1}
u(t) \in \calC(t)  \qquad \text{for all }  t\in (0,T]\setminus J;
\end{align}
\item  The left and right limits fulfill
\begin{subequations}
\label{to-save-complex}
\begin{align}
&
\label{to-save-2}
\begin{aligned}
&
 u_-(s)\in\calC(s)\text{ and }u_+(t) \in \calC(t)
 \ \text{ at every } 0<s\leq T \text{ and } 
 0\leq t< T,
\end{aligned}
\\
&
\label{to-save-3}
J  = \{ t \in [0,T] \, : \  u_-(t)  \neq  u_+(t)
  \},
\\
&
\label{to-save-4}
0 < \cost t {u_-(t)}{u_+(t)}   = \mu (\{ t\})  = \ene t{u_-(t)} -  \ene t{u_+(t)},  \quad \text{ for every } t \in J.
\end{align}
\end{subequations}
\end{enumerate}
\end{maindefinition}

A comparison between the energy balances 
\eqref{eqn_lemma} and
\eqref{to-save-eneq}
highlights  the fact that 
the contribution to \eqref{to-save-eneq}
given by   the  measure $\mu([s,t])$   surrogates the role of the energy-dissipation integral 
$
\int_s^t
\|\rmD \ene r{u(r)}\|
\|u'(r)\|
\dd r\,.
$
That is why, 
 in what follows we will refer to $\mu$ as the \emph{defect energy-dissipation measure} (for short, \emph{defect measure}), associated with $u$. Let us highlight that, by
\eqref{to-save-3}, $u$ jumps  at the atoms of $\mu$, 
and that the jump conditions  \eqref{to-save-4}  provide  a description of its energetic behavior in the jump regime
(see Propositions \ref{prop:insight} and \ref{prop:insight2}).

\par
The notion of \emph{Balanced Viscosity} solution below
brings the additional information that the measure $\mu$  is purely atomic.
Then, taking into account 
conditions \eqref{to-save-4}, we obtain
\[
\mu([s,t]) = \sum_{r\in J \cap [s,t]} \mu(\{r\}) =  \sum_{r\in J \cap [s,t]} \cost r{u_-(r)}{u_+(r)}\,.
\]
This results in a more transparent form of the energy  
balance \eqref{to-save-eneq},
cf.\ \eqref{to-save-enineq-BV} below,
akin to the one featuring  in the notion of Balanced Viscosity solution 
to a rate-independent system, cf.\ \cite{MRS13_prep}.
\begin{maindefinition}[Balanced Viscosity solution]
\label{def:bv-intro}
We call a Dissipative Viscosity solution
$u$ to \eqref{e:lim-eq}
 \emph{Balanced Viscosity} solution if the absolutely continuous and the Cantor part of the defect measure $\mu$ are zero.  Therefore,
\eqref{to-save-eneq}
 reduces to
\begin{equation}
\label{to-save-enineq-BV}
\begin{aligned}
\sum_{r\in J \cap [s,t]} \cost r{u_-(r)}{u_+(r)}  +  
\ene t{u_+(t)} = 
\ene s {u_-(s)}+ \int_s^t \pt r{u(r)} \dd r 
\text{ for every } 
0 \leq s \leq t \leq T. 
\end{aligned}
\end{equation}
\end{maindefinition}

\paragraph{\bf Convergence to Dissipative Viscosity solutions.}
Our \textbf{first main result}, 
whose proof will be given throughout Sections \ref{s:3} \& \ref{s:4}, 
ensures the convergence, up to a subsequence, of any family of solutions to (the Cauchy problem for) \eqref{e:sing-perturb}, to 
a Dissipative Viscosity solution. 
\begin{maintheorem}
\label{mainth:1}
Assume \eqref{Ezero}--\eqref{strong-critical}. Let $(\eps_n)_n$ be a null sequence, and consider a sequence 
$(u_{\eps_n}^0)_n$ of initial data for  \eqref{e:sing-perturb} such that 
\begin{equation}
\label{cvg-initial-data}
u_{\eps_n}^0 \to u_0 \quad \text{as } n\to\infty.
\end{equation}
Then there exist a (not relabeled) subsequence and a curve $u \in \mathrm{B}([0,T];\xfin)$ such that 
\begin{enumerate}
\item the following convergences hold
\begin{align}
&
\label{3converg_var}
u_{\ep_n}(t)\to u(t) \quad \text{for all } t \in [0,T] 
\\
&
\label{4converg_var}
u_{\ep_n}\weaksto u \quad \text{in $L^\infty(0,T;X)$,}\qquad u_{\ep_n}\to u \quad \text{in $L^p(0,T;X)$
\ for all\ $1\leq p<\infty$;}
\end{align}
\item
$u(0)=u_0$ and $u$ is  Dissipative Viscosity solution to \eqref{e:lim-eq}.
\end{enumerate}
\end{maintheorem}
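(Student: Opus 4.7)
\emph{Step 1: A priori estimates and Helly selection.}
The plan starts from the energy identity \eqref{eqn_lemma} applied to each $u_{\eps_n}$. Combined with \eqref{P_t} and Gronwall's lemma (cf.\ \eqref{gronw-conseq}), this will give $\sup_{t\in[0,T]} \ene{t}{u_{\eps_n}(t)}\leq C$, whence by \eqref{coercivita} all trajectories lie in a fixed compact sublevel $\subl{\rho}$. Applying Young's inequality to the two summands of the dissipation in \eqref{eqn_lemma} will yield the fundamental estimate
\begin{equation*}
\int_0^T \|\rmD\ene{r}{u_{\eps_n}(r)}\|\,\|u_{\eps_n}'(r)\|\,\dd r \,\leq\, C.
\end{equation*}
I will then introduce the nondecreasing functions $W_n(t):=\int_0^t \bigl(\tfrac{\eps_n}{2}\|u_{\eps_n}'\|^2+\tfrac{1}{2\eps_n}\|\rmD\ene{r}{u_{\eps_n}}\|^2\bigr)\dd r$ and $e_n(t):=\ene{t}{u_{\eps_n}(t)}$, both uniformly bounded; Helly's selection principle then provides a (not relabelled) subsequence with $W_n(t)\to W(t)$ and $e_n(t)\to e(t)$ at every $t\in[0,T]$. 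The associated Stieltjes measure $\mu:=\dd W$ will have at most countably many atoms, whose set I denote by $J$.

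\emph{Step 2: Pointwise convergence of $u_{\eps_n}$.}
By compactness of $\subl{\rho}$ and a diagonal extraction on a countable dense set $D\supset J\cup\{0,T\}$, I will further refine the subsequence so that $u_{\eps_n}(t)\to u(t)$ at every $t\in D$. Upgrading this to pointwise convergence on all of $[0,T]$ will require a \emph{corridor argument} that crucially uses \eqref{strong-critical}. For $t\in[0,T]\setminus J$ and any two accumulation points $\bar u,\bar u'$ of $(u_{\eps_n}(t))$, the uniform control on $\int \tfrac{1}{2\eps_n}\|\rmD\calE\|^2\,\dd r$ will force both to lie in $\calC(t)$. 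Since by \eqref{strong-critical} $\bar u$ is isolated in $\calC(t)$, continuity of $(s,v)\mapsto\|\rmD\ene{s}{v}\|$ will yield $r,\delta,c>0$ with $\|\rmD\ene{s}{v}\|\geq c$ for $v\in \overline{B(\bar u,2r)}\setminus B(\bar u,r)$, $|s-t|\leq\delta$. Any excursion of $u_{\eps_n}$ exiting $B(\bar u,2r)$ would then have to cross this corridor and thus contribute at least $c\,r$ to the energy-dissipation integral, which is bounded above by $W_n(t+\delta)-W_n(t-\delta)\to 0$ (by continuity of $W$ at $t$); contradiction, so $\bar u=\bar u'$. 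A left/right-sided version of the argument will identify the limits $u_\pm(t)$ at each $t\in J$. This gives \eqref{3converg_var}, and \eqref{4converg_var} then follows by dominated convergence.

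\emph{Step 3: Limit equation and jump relations.}
Taking the $\liminf$ in \eqref{eqn_lemma}, using pointwise convergence of $e_n$, $W_n$ and dominated convergence for $\pt{r}{u_{\eps_n}(r)}$ (controlled via \eqref{P_t}), I will obtain
\begin{equation*}
\mu([s,t])+\ene{t}{u_+(t)} \,\leq\, \ene{s}{u_-(s)}+\int_s^t \pt{r}{u(r)}\,\dd r,
\end{equation*}
while at each $t\in[0,T]\setminus J$ the corridor argument forces $u(t)\in\calC(t)$, yielding \eqref{to-save-1}--\eqref{to-save-2}. At an atom $t\in J$, I will invoke Proposition \ref{super-lemmone} (via the characterization \eqref{charact-cost_AC}) applied to the gradient flow pieces of $u_{\eps_n}$ on small intervals $[t_1^n,t_2^n]\to\{t\}$ with $u_{\eps_n}(t_i^n)\to u_\pm(t)$, obtaining $\mu(\{t\})\geq \cost{t}{u_-(t)}{u_+(t)}$. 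Conversely, the chain rule of Lemma \ref{chain_facile} applied along an optimal admissible transition supplied by Theorem \ref{prop:cost}(3) gives $\cost{t}{u_-(t)}{u_+(t)}\geq \ene{t}{u_-(t)}-\ene{t}{u_+(t)}$. These three inequalities will collapse to equality, producing the energy balance \eqref{to-save-eneq} and the jump conditions \eqref{to-save-4}, so that $u$ is a Dissipative Viscosity solution; the identity $u(0)=u_0$ follows from \eqref{cvg-initial-data}.

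\emph{Main obstacle.} The hardest step will be the second one: upgrading pointwise convergence on a countable dense set to pointwise convergence on all of $[0,T]$, despite the absence of any uniform equicontinuity for $(u_{\eps_n})$. The corridor argument succeeds precisely because \eqref{strong-critical} ensures that the dissipation weight $\|\rmD\calE\|$ is bounded below away from isolated critical points; without this assumption, trajectories could wander along connected components of $\calC(t)$ at zero dissipation cost and no pointwise limit would exist.
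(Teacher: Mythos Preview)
Your overall architecture matches the paper's (a priori estimates and Helly, then pointwise compactness via \eqref{strong-critical}, then identification via the cost), but Step~2 as written has two related gaps. First, the claim that any accumulation point $\bar u$ of $(u_{\eps_n}(t))_n$ at a fixed $t$ lies in $\calC(t)$ is not justified by the bound $\int_0^T\tfrac{1}{2\eps_n}\|\rmD\calE_r(u_{\eps_n}(r))\|^2\dd r\leq C$: this only yields $\|\rmD\calE_r(u_{\eps_n}(r))\|\to 0$ for a.e.\ $r$, not at a prescribed $t$. Second, and more seriously, your corridor argument compares two accumulation points $\bar u,\bar u'$ arising along \emph{different} subsubsequences $(n_k),(n_j)$; there is no single trajectory connecting a point near $\bar u$ to one near $\bar u'$, so the ``excursion crossing the annulus'' picture does not apply. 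Your excursion bound controls $\|u_{\eps_n}(s_1)-u_{\eps_n}(s_2)\|$ for the \emph{same} $n$ and nearby times, not $\|u_{\eps_{n_k}}(t)-u_{\eps_{n_j}}(t)\|$.

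The paper fixes both issues at once. It first singles out the full-measure set $B=\{t:\|\rmD\calE_t(u_{\eps_n}(t))\|\to 0\}$ and takes the countable dense set $A\subset B\setminus J$, so that $\hat u(t)\in\calC(t)$ for $t\in A$ is automatic. Then, for $t\notin J$, it reduces the uniqueness of the limit to a \emph{single-trajectory} statement by diagonalization: given $t_k\in A$ with $t_k\to t$, extract $n_k$ so that $u_{\eps_{n_k}}(t_k)\to u_1$ and $u_{\eps_{n_k}}(t)\to u_2$, and apply Proposition~\ref{super-lemmone} (equivalently Theorem~\ref{prop:cost}(5), which you invoke only in Step~3) on the shrinking interval $[t_k,t]$ to get $\cost{t}{u_1}{u_2}\leq\mu(\{t\})=0$, hence $u_1=u_2$. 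In short, the cost machinery you reserve for the jump relations is already the engine of the pointwise-convergence step; your corridor argument is essentially the proof of Proposition~\ref{super-lemmone}(1), but deployed in the wrong configuration. (A minor point on Step~3: since $W_n\to W$ and $e_n\to e$ pointwise, the energy identity passes to the limit as an \emph{equality}, not merely as a $\liminf$ inequality; the only subtlety is matching $W(t)-W(s)$ with $\mu([s,t])$ at atoms, which the paper does by squeezing $[s,t]$ from outside and inside.)
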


We now address the improvement of
Dissipative Viscosity to Balanced Viscosity solutions, 
under the condition that 
at every $t\in [0,T]$ there holds
\begin{equation}
\label{Loja}
\tag{$\mathrm{E}_4$}
 \limsup_{v\to u} \frac{\ene tv -\ene tu}{\| \rmD \ene tv\|}\geq 0 \quad \text{for all } u \in \calC(t).
\end{equation} 

\par
The proof of our \textbf{second  main result} is also postponed to Section \ref{s:4}. 

\begin{maintheorem}
\label{mainth:2}
In the setting of \eqref{Ezero}--\eqref{P_t}, assume in addition 
\eqref{Loja}. 
Let $u$ be a \emph{Dissipative Viscosity} solution to  \eqref{e:lim-eq} and let $\mu$ be its associated defect measure.
Then, the absolutely continuous part
$ \mu_{\mathrm{AC}}$
and the Cantor part
$\mu_{\mathrm{Ca}}$
of the measure $\mu
$
are zero, i.e.\ $u$ is a   \emph{Balanced Viscosity}
 solution to  \eqref{e:lim-eq}.
 \end{maintheorem}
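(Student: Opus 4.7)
The plan is to reformulate the conclusion as the vanishing of the diffuse part of a suitable BV function attached to the energy along $u$. Introduce
\[
\psi(t) := \ene t{u_+(t)} - \int_0^t\pt r{u(r)}\,\dd r, \qquad t\in [0,T].
\]
By \eqref{to-save-eneq}, $\psi(t) = \ene 0{u_-(0)} - \mu([0,t])$, so $\psi$ is right-continuous, non-increasing, and BV, with distributional derivative $D\psi = -\mu$ and jump set equal to $J$ by \eqref{to-save-3}. In the Lebesgue decomposition $\mu = \mu_J + \mu_{AC} + \mu_{Ca}$ the atomic part $\mu_J$ is automatically accounted for by the jumps of $\psi$, so the remaining conclusion $\mu_{AC} = \mu_{Ca} = 0$ is equivalent to showing that $\psi$ is a pure jump function on $[0,T]$.

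To prove this, I would pass to the limit as $n\to\infty$ in the chain-rule identity along the approximants $(u_{\eps_n})_n$ from Theorem~\ref{mainth:1}: using \eqref{e:sing-perturb},
\[
\ene t{u_{\eps_n}(t)} - \ene s{u_{\eps_n}(s)} - \int_s^t\pt r{u_{\eps_n}(r)}\,\dd r = -\frac{1}{\eps_n}\int_s^t\|\rmD\ene r{u_{\eps_n}(r)}\|^2\,\dd r
\]
for any $0\leq s<t\leq T$ with $s,t\notin J$. The left-hand side converges by \eqref{3converg_var}--\eqref{4converg_var}, and a comparison with \eqref{to-save-eneq} forces the right-hand side to converge to $-\mu([s,t])$. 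Thus $(D\psi)_{\mathrm{diff}} = 0$ amounts to showing that this limit equals only the contribution $-\sum_{t_j\in J\cap[s,t]}\mu(\{t_j\})$ of the atoms in $[s,t]$. This is where \eqref{Loja} enters: for every $r_0 \in [s,t]\setminus J$, $u(r_0) \in \calC(r_0)$ by Definition~\ref{def:sols-notions-1}, and \eqref{Loja} supplies, for any $\eta>0$, points $v$ arbitrarily close to $u(r_0)$ with $\ene {r_0}v - \ene{r_0}{u(r_0)} \geq -\eta\|\rmD\ene{r_0}v\|$. Using such $v$ as endpoints of short admissible comparison curves in $\admis{u(r_0)}{v}{r_0}$, and exploiting the semicontinuity characterization \eqref{charact-cost_AC} of $\mathsf{c}$ together with the optimal-jump-transition equation \eqref{analogy}, I would obtain on small intervals $I\ni r_0$ the local estimate $\mu(I\setminus J) \leq C\eta|I|$. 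A covering of $[s,t]\setminus J$ by such intervals (available since $J$ is countable) and the arbitrariness of $\eta$ then give $\mu_{\mathrm{diff}}([s,t])=0$.

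The principal obstacle is converting the one-sided \emph{limsup} assertion in \eqref{Loja} into a uniform bound along the specific viscous trajectories $u_{\eps_n}(r)$: condition \eqref{Loja} only asserts the existence of \emph{some} sequence $v\to u(r_0)$ with the favorable bound, so one cannot directly substitute $v = u_{\eps_n}(r)$ and a selection argument is needed to bridge the gap between these optimal test states and the actual trajectory values. I expect this to be achieved by constructing reparametrized gradient flows of $\ene{r_0}{\cdot}$ that realize the \eqref{Loja}-bound on admissible curves, and then using them as test competitors for the viscous energy-dissipation integral on shrinking neighborhoods of $r_0$ via the lower semicontinuity \eqref{charact-cost_AC} of $\mathsf{c}$.
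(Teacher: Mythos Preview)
Your reformulation in the first paragraph is correct and is essentially the same starting point as the paper: one must show that $\psi$ (equivalently, the energy along $u_+$ minus the integrated power) is a pure jump function, which amounts to the ``reverse energy inequality''
\[
\mu_{\mathrm J}([0,t]) + \ene t{u_+(t)} \geq \ene 0{u(0)} + \int_0^t \pt r{u(r)}\,\dd r.
\]
The divergence from the paper, and the genuine gap, begins in your second paragraph.

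\textbf{First issue: you invoke approximants that are not part of the hypotheses.} Theorem~\ref{mainth:2} is stated for an \emph{arbitrary} Dissipative Viscosity solution under \eqref{Ezero}--\eqref{P_t} and \eqref{Loja}; it does not assume \eqref{strong-critical}, and it does not assume that $u$ arises as a limit of solutions $u_{\eps_n}$ to \eqref{e:sing-perturb}. Your argument rests on passing to the limit in the chain rule for $(u_{\eps_n})_n$ ``from Theorem~\ref{mainth:1}'', but no such sequence is available in this setting. The paper's proof works directly with the defining properties \eqref{to-save-eneq}--\eqref{to-save-complex} of a DV solution and never appeals to approximants.

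\textbf{Second issue: the local estimate $\mu(I\setminus J)\leq C\eta|I|$ is not established.} You yourself flag the ``principal obstacle'': \eqref{Loja} only produces \emph{some} sequence $v\to u(r_0)$ with $\ene{r_0}{v}-\ene{r_0}{u(r_0)}\geq -\eta\|\rmD\ene{r_0}{v}\|$, and there is no mechanism to transfer this to the actual viscous trajectories or to the measure $\mu$. Your proposed fix via \eqref{charact-cost_AC} does not help: that inequality is a \emph{lower} bound for the cost in terms of energy-dissipation integrals along converging curves, which goes in the wrong direction for obtaining an \emph{upper} bound on $\mu(I\setminus J)$. Likewise, the optimal-jump-transition equation \eqref{analogy} concerns behavior \emph{at} jump points and says nothing about the diffuse part of $\mu$ near $r_0\notin J$.

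\textbf{What the paper does instead.} The paper avoids both issues by reducing the reverse energy inequality to a purely real-variable monotonicity lemma (a variant of \cite[Lemma~6.2]{Savare-Minotti}): if $g$ is strictly increasing and $f$ is right-continuous, lower semicontinuous off $J_g$, and satisfies a jump condition at $J_g$ together with
\[
\limsup_{s\downarrow t}\frac{f(t)-f(s)}{g_+(s)-g_+(t)}\geq -1 \quad\text{for all }t,
\]
then $f-g$ is non-increasing. One applies this with $f(t)=\int_0^t\pt r{u(r)}\dd r-\ene t{u_+(t)}$ and $g(t)=\sum_{r\in J\cap[0,t]}\cost r{u_-(r)}{u_+(r)}+\eta t$. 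Condition \eqref{Loja} is used exactly once, to verify the $\limsup$ condition above via
\[
\limsup_{s\downarrow t}\frac{\ene t{u_+(s)}-\ene t{u_+(t)}}{\|\rmD\ene t{u_+(s)}\|}\cdot\frac{\|\rmD\ene t{u_+(s)}\|}{g_+(s)-g_+(t)}\geq 0,
\]
since $u_+(t)\in\calC(t)$ and $u_+(s)\to u_+(t)$. This pointwise use of \eqref{Loja} along the \emph{limit curve itself} (rather than along approximants or auxiliary test curves) is the missing idea in your proposal.
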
 

\begin{remark}[A discussion of  \eqref{Loja}]
\label{rmk:Loja}
\upshape
Observe
that \eqref{Loja} is trivially satisfied  in the case the functional $u\mapsto \ene tu $ is \emph{convex}. Indeed, if $\rmD \ene tu=0$, 
then $\ene tv \geq \ene tu $ for all $v\in \xfin$. 
\par
Another sufficient condition for  \eqref{Loja} is that 
$\calE$ complies with the celebrated {\L}ojasiewicz inequality, namely
\begin{equation}
\label{true-Loja}
\! \! \!  \forall\, t \in [0,T]\  
\forall\, u \in \calC(t)
\ \exists\, \theta \in (0,1) \  \exists\, C>0 \  \exists\, R>0 \  \forall\, v \in B_R( u)\, :
\ |\ene tv - \ene t{ u}|^\theta \leq C \|\rmD \ene tv\|.
\end{equation}
In this case, we even have
\[
\lim_{v\to u}\frac{\left|\ene tv-\ene tu\right|}{\| \rmD \ene tv\|}\leq C \lim_{v\to u}  |\ene tv - \ene t{ u}|^{1-\theta}=0
\]
by continuity of $u\mapsto \ene tu$.
\end{remark} 

\section{Optimal jump transitions}
\label{s:ojt}
In this section, we get 
further insight into the jump conditions \eqref{to-save-4}.
Due to Theorem \ref{prop:cost} (3), 
for every $t \in J$ the left and right limits 
$u_-(t)$ and $u_+(t)$ are connected by a curve
 $\teta \in \admis{u_-(t)}{u_+(t)}{t}$    minimizing
the cost $\cost t{u_-(t)}{u_+(t)}$,
which will be hereafter referred to as an \emph{optimal jump transition} between $u_-(t)$ and $u_+(t)$. 
The following result states that every 
$\mathrm{C}_{\mathrm{loc}}^{\mathrm{lip}}$-piece of an optimal jump transition can be reparameterized to a curve 
solving the gradient flow equation 
\eqref{rescaled-gradient-flow} below.
 
\begin{prop}
\label{prop:insight}
Let $u\in \mathrm{B}( [0,T] ; \Hilbert)$ 
be a \emph{Dissipative Viscosity solution} to \eqref{e:lim-eq}.
Also, let  $t \in J$  be fixed,  and let $\teta \in \admis{u_-(t)}{u_+(t)}{t}$  
be an optimal jump transition  between $u_-(t)$ and $u_+(t)$; 
let $(\mathsf{a},\mathsf{b}) \subset [0,1] $ be such that  $\teta|_{(\mathsf{a},\mathsf{b})} \in
\mathrm{C}_{\mathrm{loc}}^{\mathrm{lip}}
((\mathsf{a},\mathsf{b});X)$ and $\teta(s) \notin\mathcal{C}(t)$ for all $s \in (\mathsf{a},\mathsf{b}) $.
Then, there exists a reparameterization $\sigma \mapsto \mathfrak{s}(\sigma)$ mapping
a (possibly unbounded) interval $(\tilde{\mathsf{a}},\tilde{\mathsf{b}})$ into $(\mathsf{a},\mathsf{b})$, such that 
the curve $ \tilde{\teta}(\sigma):= \teta (\mathfrak{s}(\sigma))$ 
is locally absolutely continuous and 
fulfills the gradient flow equation 
\begin{equation}
\label{rescaled-gradient-flow}
{\tilde\teta}'(\sigma) +  \rmD\mathcal
E_t(\tilde\teta(\sigma)) =0 
\quad \foraa \sigma \in (\tilde{\mathsf{a}},\tilde{\mathsf{b}})\,.
\end{equation}
\end{prop}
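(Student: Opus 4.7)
The plan is to extract from the optimality of $\teta$ the Cauchy--Schwarz equality $\teta'(s)\parallel -\rmD\calE_t(\teta(s))$ almost everywhere, and then to absorb the resulting nonnegative scalar factor through a time change.

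First, by the jump relation \eqref{to-save-4}, $\cost{t}{u_-(t)}{u_+(t)} = \calE_t(u_-(t))-\calE_t(u_+(t)) < +\infty$, and since $\teta$ attains this infimum, its energy-dissipation integral is finite. Hence Lemma \ref{chain_facile} applies on the whole of $[0,1]$, yielding
\begin{align*}
\calE_t(u_-(t))-\calE_t(u_+(t))
&=-\int_0^1\langle\rmD\calE_t(\teta(s)),\teta'(s)\rangle\dd s\\
&\leq\int_0^1\|\rmD\calE_t(\teta(s))\|\,\|\teta'(s)\|\dd s
=\cost{t}{u_-(t)}{u_+(t)}.
\end{align*}
Since the leftmost and rightmost terms coincide, the middle (Cauchy--Schwarz) inequality is saturated. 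As the integrand $\|\rmD\calE_t(\teta)\|\|\teta'\|+\langle\rmD\calE_t(\teta),\teta'\rangle$ is pointwise nonnegative and has zero integral, it vanishes a.e.\ on $[0,1]$. Restricting to the piece $(\mathsf{a},\mathsf{b})$, where $\rmD\calE_t(\teta)\ne 0$, this forces the almost-everywhere proportionality
\begin{equation*}
\teta'(s)=-\lambda(s)\,\rmD\calE_t(\teta(s)),\qquad \lambda(s):=\frac{\|\teta'(s)\|}{\|\rmD\calE_t(\teta(s))\|}\geq 0.
\end{equation*}

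Next, fixing $s_0\in(\mathsf{a},\mathsf{b})$, I would introduce the time change
\begin{equation*}
\tau(s):=\int_{s_0}^s\lambda(r)\dd r,\qquad \tilde{\mathsf{a}}:=\inf_{(\mathsf{a},\mathsf{b})}\tau,\quad \tilde{\mathsf{b}}:=\sup_{(\mathsf{a},\mathsf{b})}\tau,
\end{equation*}
which may be unbounded since $\|\rmD\calE_t(\teta)\|$ is only locally bounded below on $(\mathsf{a},\mathsf{b})$. The map $\tau$ is locally absolutely continuous and nondecreasing, with $\tau'(s)=\lambda(s)$ for a.e.\ $s$. On each maximal subinterval $I\subset(\mathsf{a},\mathsf{b})$ on which $\lambda$ vanishes a.e. (equivalently, $\teta'$ vanishes a.e.), the curve $\teta$ is constant; collapsing each such $I$ to a single point leaves the image of $\teta$ unchanged and renders $\tau$ strictly increasing. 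Its inverse $\mathfrak{s}:(\tilde{\mathsf{a}},\tilde{\mathsf{b}})\to(\mathsf{a},\mathsf{b})$ is then locally absolutely continuous with $\mathfrak{s}'(\sigma)=1/\lambda(\mathfrak{s}(\sigma))$ for a.e.\ $\sigma$, and setting $\tilde\teta(\sigma):=\teta(\mathfrak{s}(\sigma))$, the chain rule gives
\begin{equation*}
\tilde\teta'(\sigma)=\teta'(\mathfrak{s}(\sigma))\,\mathfrak{s}'(\sigma)
=-\lambda(\mathfrak{s}(\sigma))\cdot\frac{1}{\lambda(\mathfrak{s}(\sigma))}\,\rmD\calE_t(\tilde\teta(\sigma))
=-\rmD\calE_t(\tilde\teta(\sigma)),
\end{equation*}
which is \eqref{rescaled-gradient-flow}.

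The main delicate point will be the careful handling of the set where $\lambda$ vanishes when inverting $\tau$; but since $\teta$ is necessarily constant on each connected component of this set, the proposed identification loses no information, and the reparameterization is well-defined.
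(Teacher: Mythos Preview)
Your proof follows essentially the same route as the paper's: derive the Cauchy--Schwarz equality from optimality combined with the jump relation \eqref{to-save-4}, extract the pointwise proportionality $\teta'(s)\parallel -\rmD\calE_t(\teta(s))$ on $(\mathsf a,\mathsf b)$, and then absorb the scalar factor by the time change $\tau(s)=\int^s \|\teta'\|/\|\rmD\calE_t(\teta)\|$ and its inverse $\mathfrak s$. The only cosmetic difference is that the paper writes the proportionality constant as $1/\lambda$ rather than $\lambda$; your explicit handling of the set where $\teta'=0$ (collapsing constant pieces before inverting $\tau$) addresses a point the paper's argument passes over.
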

\begin{proof} 
Any optimal jump transition $\teta \in \admis{u_-(t)}{u_+(t)}{t}$ 
fulfills the jump condition \eqref{to-save-4}
with $u_-(t)=\teta(0)$ and $u_+(t)=\teta(1)$. 
Combining this with the chain rule
(see Lemma \ref{chain_facile}), 
we conclude that
\begin{equation}
\label{punto-di-partenza}
\begin{aligned}
\frac{\mathrm{d}}{\mathrm{d}s}\mathcal E_t (\teta(s))=
\langle\rmD\mathcal
E_t(\teta(s)),\teta'(s) \rangle= -
  \|\rmD\mathcal
E_t(\teta(s))\| \|\teta'(s)\|
\quad \foraa\, s \in (\mathsf{a},\mathsf{b}),
\end{aligned}
\end{equation}
hence 
\begin{equation}
\label{punto-di-partenza-2}
\foraa\, s \in (\mathsf{a},\mathsf{b}) \quad \exists\, \lambda(s) >0 \, : \quad 
\lambda(s) \teta'(s) +
\rmD\mathcal
E_t(\teta(s))
=0. 
\end{equation}
In order to find $\mathfrak{s}=\mathfrak{s}(\sigma)$, 
we fix  $\bar s \in (\mathsf{a},\mathsf{b})$ and set 
\begin{equation}
\label{def-sigma}
\sigma(s):= \int_{\bar s}^s \frac{1}{\lambda(r)} \, \mathrm{d}r\,.
\end{equation}
Indeed, it follows from \eqref{punto-di-partenza-2} that $\lambda(s) =\frac{ \| \rmD\mathcal
E_t(\teta(s)) \|}{\|\teta'(s)\| } $ for almost all $s \in (\mathsf{a},\mathsf{b}) $. Since $s \in (\mathsf{a},\mathsf{b}) \mapsto \|\rmD\mathcal
E_t(\teta(s)) \|$ is strictly positive and continuous,
and since $\teta$ is locally Lipschitz on $(\mathsf{a},\mathsf{b}) $,
it is immediate to deduce that 
for every closed interval 
$[{\mathsf{a}}+\rho,\mathsf{b}-\rho]\subset ({\mathsf{a}},\mathsf{b})$ there exists
$\lambda_\rho>0$ such that $\lambda(s) \geq \lambda_\rho$ for all 
$s \in [{\mathsf{a}}+\rho,\mathsf{b}-\rho]$.  Therefore, $\sigma$
is a well-defined, locally Lipschitz continuous map 
with $\sigma'(s)>0$ 
for almost all $s \in (\mathsf{a},\mathsf{b})$. 
We let $\tilde{\mathsf{a}}:= \sigma(\mathsf{a})$, $\tilde{\mathsf{b}}:= \sigma(\mathsf{b})$, and  set
$\mathfrak{s}: (\tilde{\mathsf{a}},\tilde{\mathsf{b}})\to  (\mathsf{a},\mathsf{b}) $ to 
be the inverse map of $\sigma$: it satisfies 
\begin{equation}\label{form-deriv}
\mathfrak{s}'(\sigma)= \lambda (\mathfrak{s}(\sigma)) \quad \foraa\, \sigma \in (\tilde{\mathsf{a}},\tilde{\mathsf{b}}),
\end{equation}
and it is an absolutely continuous map, being 
$  \int_{\tilde a}^{\tilde b}\mathfrak s'(\sigma)d\sigma  =\int_{\tilde a}^{\tilde b}\lambda(\mathfrak s(\sigma))d\sigma=b-a$. 
Using the definition of $\tilde\teta$, \eqref{form-deriv}, and  \eqref{punto-di-partenza-2}, we conclude that $\tilde \teta$ fulfills 
\eqref{rescaled-gradient-flow}.
Since $\mathfrak s$ is absolutely continuous and $\teta$ locally Lipschitz continuous, the
curve $\tilde\teta$ turns out to be locally absolutely continuous. 
\end{proof}

The symmetry property of the cost
proved in Theorem \ref{prop:cost} (2)
gives some information about the number of the optimal jump transitions.
This is the content of the following proposition.

\begin{prop}
\label{prop:insight2}
Let $u\in \mathrm{B}( [0,T] ; \Hilbert)$ 
be a \emph{Dissipative Viscosity solution} to \eqref{e:lim-eq},
and let $t \in J$.
There exists a finite number of optimal jump transitions between 
$u_-(t)$ and $u_+(t)$.
\end{prop}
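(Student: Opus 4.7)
The plan is to argue by contradiction, counting optimal jump transitions up to reparameterization on each locally Lipschitz piece (such reparameterizations leave the energy-dissipation integral invariant). Suppose then that there exists an infinite family $(\teta_n)_n\subset\admis{u_-(t)}{u_+(t)}{t}$ of pairwise inequivalent optimal transitions, with partitions $0=\mathsf{t}_0^n<\cdots<\mathsf{t}_{j_n}^n=1$ as in Definition~\ref{main-def-1}.

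First, I would confine all the $\teta_n$ to a fixed compact set. Combining Lemma~\ref{chain_facile} with the optimality identity~\eqref{punto-di-partenza} derived in the proof of Proposition~\ref{prop:insight}, the map $s\mapsto\ene t{\teta_n(s)}$ is non-increasing on $[0,1]$, so every $\teta_n$ takes values in the sublevel set $\subl\rho$ with $\rho:=\ene t{u_-(t)}$, which is relatively compact by \eqref{coercivita}. By \eqref{strong-critical}, the intersection $\calC(t)\cap\subl\rho$ is therefore a finite set $\{p_1,\dots,p_N\}$.

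Next, I would extract a finite combinatorial structure. Every breakpoint $\teta_n(\mathsf{t}_i^n)$, $1\leq i\leq j_n-1$, lies in $\{p_1,\dots,p_N\}$; moreover, additivity of the cost along the partition, together with Theorem~\ref{prop:cost}(1) and the Cauchy--Schwarz saturation underlying \eqref{punto-di-partenza}, forces the breakpoint energies to be strictly decreasing in $i$, since each piece joins two distinct critical points with positive cost equal to the energy drop. This bounds $j_n\leq N+1$ uniformly in $n$ and pins the tuple $(\teta_n(\mathsf{t}_0^n),\dots,\teta_n(\mathsf{t}_{j_n}^n))$ in a finite set; by pigeonhole, up to a subsequence I may assume that all $\teta_n$ visit the same ordered sequence $u_-(t)=q_0,q_1,\dots,q_k=u_+(t)$. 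Consequently, each piece $\teta_n|_{[\mathsf{t}_i^n,\mathsf{t}_{i+1}^n]}$ is itself an optimal transition between $q_i$ and $q_{i+1}$ and, by Proposition~\ref{prop:insight}, reparameterizes to a maximal solution of $\tilde\teta'+\rmD\ene t{\tilde\teta}=0$ heteroclinic from $q_i$ to $q_{i+1}$.

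The last step, which I expect to be the main obstacle, is to show that for each consecutive pair $q_i,q_{i+1}$ there are only finitely many optimal heteroclines between them. Here I would invoke the symmetry of $\costname{t}$ (Theorem~\ref{prop:cost}(2)): by symmetry, both the forward problem $q_i\to q_{i+1}$ and the backward one $q_{i+1}\to q_i$ attain the same minimal cost $\ene t{q_i}-\ene t{q_{i+1}}$ and are realized along the same orbits, so an optimal orbit is simultaneously traced by a forward and a backward reparameterized gradient flow. Combined with the isolation of $q_i,q_{i+1}$ in $\subl\rho$ and with the rigid Cauchy--Schwarz saturation along each piece, this should rule out continuous deformations of optimal heteroclines: a one-parameter family would allow, via a rerouting at an intermediate energy level, the construction of an admissible curve strictly cheaper than $\ene t{q_i}-\ene t{q_{i+1}}$, contradicting optimality. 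The delicate point lies precisely in this rerouting argument, since without Morse or transversality hypotheses the set of heteroclines from $q_i$ to $q_{i+1}$ can a priori form a continuum, and finiteness must therefore be extracted purely from the variational side.
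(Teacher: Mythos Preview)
Your step~3 is where the argument breaks, and you rightly flag it as the delicate point. Nothing in your setup rules out a continuum of gradient-flow heteroclines between two isolated critical points $q_i,q_{i+1}$: every such heterocline saturates Cauchy--Schwarz pointwise and therefore realizes the minimal cost $\ene t{q_i}-\ene t{q_{i+1}}$, so your proposed ``rerouting at an intermediate energy level'' can never produce anything strictly cheaper than that value. The symmetry of $\costname t$ does not help either: the time-reversal of an orbit of $\tilde\teta'+\rmD\ene t{\tilde\teta}=0$ solves $\tilde\teta'-\rmD\ene t{\tilde\teta}=0$, a \emph{different} ODE, so an optimal orbit is \emph{not} simultaneously a forward and backward gradient flow in any sense that constrains it. In short, steps~1--2 are fine (and the observation that the $j_n$ are uniformly bounded is correct and useful), but step~3 attempts to prove a finiteness-of-heteroclines statement that simply does not follow from \eqref{Ezero}--\eqref{strong-critical} alone by the route you sketch.

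The paper's argument bypasses all of this structure and is much shorter. Assuming there are infinitely many optimal transitions between $x:=u_-(t)$ and $y:=u_+(t)$, one selects $2N{+}1$ of them, reparameterizes the odd-indexed ones to run from $x$ to $y$ and the even-indexed ones (reversed) from $y$ to $x$, and concatenates them into a single curve $\teta:[0,1]\to\xfin$ with $\teta(0)=x$, $\teta(1)=y$. The chain rule along $\teta$ gives $\ene tx-\ene ty=-\sum_i\int\langle\rmD\ene t{\teta_{i+1}},\teta_{i+1}'\rangle\dd s$; invoking~\eqref{punto-di-partenza} on each piece and the symmetry of the cost (Theorem~\ref{prop:cost}(2)), the paper rewrites this as $(2N{+}1)\,\cost t{x}{y}$. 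Since $\cost t{x}{y}>0$ and $N$ is arbitrary while $\ene tx-\ene ty<\infty$, this is the desired contradiction. No analysis of the heterocline set, no pigeonholing on breakpoint tuples, no rerouting: the whole proof is a concatenation and an arithmetic contradiction.
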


\begin{proof}
Suppose by contradiction that there exists an infinite number of optimal jump
transitions connecting $x:=u_-(t)$ and $y:=u_+(t)$ and, 
for an arbitrary natural number $N$, choose 
$2N+1$ of them: $\teta_1,...,\teta_{2N+1}$.  
Let us fix an arbitrary partition $0=t_0 < \ldots < t_{2N+1}=1$. 
We can suppose that,
up to reparametrizations, 
$\teta_{2i+1}:[t_{2i},t_{2i+1}]\to X$ is such that $\teta_{2i+1}(t_{2i})=x,\,\teta_{2i+1}(t_{2i+1})=y$
for $i=0,...,N$,
and $\teta_{2i}:[t_{2i-1},t_{2i}]\to X$ is such that $\teta_{2i}(t_{2i-1})=y,\,\teta_{2i}(t_{2i})=x$ 
for $i=1,...,N$.
Consider the function $\teta:[0,1]\to X$ defined as
\begin{equation*}
\teta:=
\begin{cases}
\teta_{2i+1} & \text{on }[t_{2i},t_{2i+1}]\text{ for }i=0,...,N, \\
\teta_{2i} & \text{on }[t_{2i-1},t_{2i}]\text{ for }i=1,...,N,
\end{cases}
\end{equation*}
and note that $\teta(0)=x$ and $\teta(1)=y$.
Therefore, by the chain rule we have
\begin{eqnarray*}
+\infty>\ene tx-\ene ty & = & -\sum_{i=0}^{2N}\int_{t_i}^{t_{i+1}}
\la\rmD\ene t{\teta_{i+1}(s)},\teta'_{i+1}(s)\ra \dd s\\
 & = & \sum_{i=0}^{2N}\int_{t_i}^{t_{i+1}}
\|\rmD\ene t{\teta_{i+1}(s)}\|
\|\teta_{i+1}'(s)\|\dd s=(2N+1)\cost txy,
\end{eqnarray*}
where the second equality is due to the fact that $\teta_{i+1}$ 
is an optimal jump transition on $[t_i,t_{i+1}]$
for $i=0,...,2N$ (cf. \eqref{punto-di-partenza}), 
and the third descends from the symmetry of the cost.
Since $N$ can be chosen arbitrarily large, the above equalities give a contradiction.
\end{proof}

In what follows 
we show that, if the energy $\cE$ complies with 
the {\L}ojasiewicz inequality \eqref{true-Loja}
(which  implies  \eqref{Loja},
as observed in Remark \ref{rmk:Loja}), 
the optimal jump transitions connecting jump points of
Balanced Viscosity solutions 
(see Definition \ref{def:bv-intro} and Theorem \ref{mainth:2})
have a further property. 
In fact, they have finite length.

\begin{theorem}
\label{th:6.1}
In the setting of \eqref{Ezero}--\eqref{P_t}, 
assume in addition \eqref{true-Loja},
and let $u$ be a \emph{Balanced Viscosity} solution to  
\eqref{e:lim-eq}.
For $t\in [0,T]$ fixed, let   $\teta \in \admis{u_-(t)}{u_+(t)}{t}$  be an
optimal jump transition  between $u_-(t)$ and $u_+(t)$, and let
$(\mathsf{a},\mathsf{b}) \subset [0,1] $ be such that
$\teta|_{(\mathsf{a},\mathsf{b})}\in
\mathrm{C}_{\mathrm{loc}}^{\mathrm{lip}}
((\mathsf{a},\mathsf{b});\Hilbert)$,
$\teta(s)\notin\mathcal{C}(t)$ for all
$s \in (\mathsf a,\mathsf b)$,
and $\teta(a)$, $\teta(b)\in\calC(t)$.
Then the curve $\teta|_{(\mathsf{a},\mathsf{b})}$ has finite length.
\end{theorem}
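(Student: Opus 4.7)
The plan is to combine the reparameterization provided by Proposition \ref{prop:insight} with the classical \L ojasiewicz finite-length argument for gradient flows of smooth functionals.

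First I would invoke Proposition \ref{prop:insight} to rewrite $\teta|_{(\mathsf{a},\mathsf{b})}$, up to a monotone reparameterization $\mathfrak{s}:(\tilde{\mathsf{a}},\tilde{\mathsf{b}})\to(\mathsf{a},\mathsf{b})$, as a locally absolutely continuous curve $\tilde\teta(\sigma)=\teta(\mathfrak{s}(\sigma))$ that solves the autonomous gradient flow $\tilde\teta'(\sigma)+\rmD\mathcal E_t(\tilde\teta(\sigma))=0$. Since length is invariant under monotone reparameterization, it suffices to show that $\int_{\tilde{\mathsf a}}^{\tilde{\mathsf b}}\|\tilde\teta'(\sigma)\|\dd\sigma<+\infty$. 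Because $\teta\in\rmC([0,1];X)$ and $\mathfrak{s}$ is increasing and bijective onto $(\mathsf a,\mathsf b)$, one has $\tilde\teta(\sigma)\to\teta(\mathsf b)\in\calC(t)$ as $\sigma\to\tilde{\mathsf b}^-$ and $\tilde\teta(\sigma)\to\teta(\mathsf a)\in\calC(t)$ as $\sigma\to\tilde{\mathsf a}^+$; this step is crucial because $(\tilde{\mathsf a},\tilde{\mathsf b})$ may well be unbounded.

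Next I would carry out the standard \L ojasiewicz--Simon trajectory argument at the endpoint $u_\ast=\teta(\mathsf b)$. By \eqref{true-Loja} pick $R,C,\theta$ so that $|\mathcal E_t(v)-\mathcal E_t(u_\ast)|^\theta\leq C\|\rmD\mathcal E_t(v)\|$ for $v\in B_R(u_\ast)$, and choose $\sigma_0$ so close to $\tilde{\mathsf b}$ that $\tilde\teta([\sigma_0,\tilde{\mathsf b}))\subset B_R(u_\ast)$. Setting $\varphi(\sigma):=\mathcal E_t(\tilde\teta(\sigma))-\mathcal E_t(u_\ast)$, the gradient-flow identity $\varphi'(\sigma)=-\|\rmD\mathcal E_t(\tilde\teta(\sigma))\|^2<0$ (strict, since $\tilde\teta$ avoids $\calC(t)$) combined with $\varphi(\sigma)\downarrow 0$ gives $\varphi>0$ on $[\sigma_0,\tilde{\mathsf b})$, and then
\[
-\frac{\dd}{\dd\sigma}\,\varphi(\sigma)^{1-\theta}
= (1-\theta)\,\varphi(\sigma)^{-\theta}\,\|\rmD\mathcal E_t(\tilde\teta(\sigma))\|^2
\geq (1-\theta)\,C^{-1}\,\|\rmD\mathcal E_t(\tilde\teta(\sigma))\|
= (1-\theta)\,C^{-1}\,\|\tilde\teta'(\sigma)\|,
\]
where the inequality uses \eqref{true-Loja} in the form $\varphi^{-\theta}\geq C^{-1}\|\rmD\mathcal E_t(\tilde\teta)\|^{-1}$ and the last equality uses the gradient flow equation. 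Integrating on $[\sigma_0,\tilde{\mathsf b})$ yields $\int_{\sigma_0}^{\tilde{\mathsf b}}\|\tilde\teta'(\sigma)\|\dd\sigma\leq (1-\theta)^{-1}C\,\varphi(\sigma_0)^{1-\theta}<+\infty$.

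Running the analogous argument at the other endpoint $u_{\ast\ast}=\teta(\mathsf a)$—now with $\mathcal E_t(\tilde\teta(\sigma))$ increasing as $\sigma\to\tilde{\mathsf a}^+$ in reverse time, so that $\mathcal E_t(\tilde\teta(\sigma))-\mathcal E_t(u_{\ast\ast})\downarrow 0$ as $\sigma\to\tilde{\mathsf a}^+$ along $-\sigma$—gives a symmetric bound $\int_{\tilde{\mathsf a}}^{\sigma_1}\|\tilde\teta'(\sigma)\|\dd\sigma<+\infty$ for some $\sigma_1$ close to $\tilde{\mathsf a}$. The remaining central piece $[\sigma_1,\sigma_0]$ is a compact subinterval of $(\tilde{\mathsf a},\tilde{\mathsf b})$ on which $\tilde\teta$ is Lipschitz and $\|\rmD\mathcal E_t\circ\tilde\teta\|$ is bounded, so its length is trivially finite. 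Summing the three contributions gives the result. The only substantive difficulty is the possible unboundedness of $(\tilde{\mathsf a},\tilde{\mathsf b})$: one cannot invoke local Lipschitz continuity on a bounded interval and must genuinely convert the convergence $\tilde\teta(\sigma)\to\calC(t)$ into a quantitative length bound, which is exactly what the \L ojasiewicz inequality supplies.
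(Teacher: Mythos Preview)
Your proposal is correct and follows essentially the same route as the paper: reparameterize via Proposition~\ref{prop:insight} to an autonomous gradient flow of $\mathcal E_t$, then apply the standard \L ojasiewicz finite-length estimate at each critical endpoint by differentiating $|\mathcal E_t(\tilde\teta)-\mathcal E_t(u_\ast)|^{1-\theta}$ and bounding the length integral, leaving the compact middle piece trivial. The only cosmetic difference is that the paper normalizes $\mathcal E_t(\teta(\mathsf b))=0$ and dispatches the endpoint $\teta(\mathsf a)$ with a one-line ``similarly,'' whereas you spell out the reverse-time version; both are the same argument.
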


For the proof of Theorem \ref{th:6.1} we shall exploit   the crucial
fact that, since $\teta $ is an \emph{optimal jump transition}, (a
reparameterization of) $\teta|_{(\mathsf{a},\mathsf{b})}$ is a
gradient flow of the energy $\cE_t$, cf.\ Proposition \ref{prop:insight}.  
This will allow us to develop
arguments for gradient systems driven by energies satisfying the
{\L}ojasiewicz inequality,
showing that the related trajectories 
have finite length.

\begin{proof}
Recall from Proposition \ref{prop:insight}
that $\teta|_{(\mathsf{a},\mathsf{b})}$ 
can be reparameterized to a curve
$\tilde\teta$ on a (possibly unbounded)
interval $(\tilde{\mathsf a},\tilde{\mathsf b} )$
such that $\tilde \teta$ fulfills the gradient flow equation
\begin{equation}
\label{more-precise-gflow_smooth}
\tilde{\teta}'(\sigma) + \argminpartial {\cE}
 t{\tilde\teta(\sigma)} =0 
\qquad \foraa\, \sigma \in (\tilde{\mathsf a},\tilde{\mathsf b} ).
\end{equation}
Observe that for every $R>0$ there exists 
$\sigma_R>\tilde{\mathsf a}$ such that
\begin{equation}
\label{eq:per_lungh_Loja_1}
\tilde\teta(\sigma)\in B_R(\teta(b))
\qquad\mbox{for every}\quad\sigma>\sigma_R.
\end{equation}
Supposing for simplicity that $\cE_t(\teta(b))=0$,
observe that \eqref{true-Loja} reads
\begin{equation}
\label{eq:per_lungh_Loja_2}
\big(\ene tv \big)^\theta \leq C \minpartial {\cE}
 t{v}
\qquad\mbox{for all}\quad v\in B_R(\teta(b)).
\end{equation}
From \eqref{more-precise-gflow_smooth},
\eqref{eq:per_lungh_Loja_1} and \eqref{eq:per_lungh_Loja_2}
we deduce that for every $\tilde\sigma\in(\sigma_R,\tilde{\mathsf b})$
it holds
\begin{align*}
\int_{\sigma_R}^{\tilde\sigma}
\|\tilde\teta'(\sigma)\|\dd\sigma
&=
\int_{\sigma_R}^{\tilde\sigma}
\minpartial {\cE} t{\tilde\teta(\sigma)} \dd\sigma
\leq
C\int_{\sigma_R}^{\tilde\sigma}
\frac{\minpartial {\cE} t{\tilde\teta(\sigma)}^2}
{\big(\ene t{\tilde\teta(\sigma)}\big)^{\theta}}\dd\sigma\\
&=
-\frac C{1-\theta}
\left[\big(\ene t{\tilde\teta(\tilde\sigma)}\big)^{1-\theta}
-
\big(\ene t{\tilde\teta(\sigma_R)}\big)^{1-\theta}
\right]\leq C'. 
\end{align*}
Note that in the second equality we have used the
fact that
\[
\frac{\dd}{\dd\sigma}\big(\ene t{\tilde\teta(\sigma)}\big)^{1-\theta}
=
-(1-\theta)\big(\ene t{\tilde\teta(\sigma)}\big)^{-\theta}
\minpartial{\cE} t{\tilde\teta(\sigma)}^2,
\]
cf.\ also \eqref{punto-di-partenza}.
In particular, we have obtained that
$\int_{s_{\mathsf b,R}}^{\mathsf b}
\|\teta'(s)\|\dd s<\infty$,
for some $s_{\mathsf b,R}\in(\mathsf a,\mathsf b)$.
Arguing in a similar way, one can obtain that
$\int_{\mathsf a}^{s_{\mathsf a,R}}
\|\teta'(s)\|\dd s<\infty$ as well,
for some $s_{\mathsf a,R}\in(\mathsf a,\mathsf b)$,
and this finishes the proof.
\end{proof}


\section{Properties of the energy-dissipation integrals and cost}
\label{s:3}
In order to prove  Thm.\ \ref{prop:cost}  on properties of the cost function $\mathsf{c}$, it is necessary to investigate the limit of the energy-dissipation integrals 
$\int_0^1 \minpartial \calE t {\teta(s)} \| \teta'(s)\|\dd
s$, which enter in the definition \eqref{costo-simpler} of $\costname t$, along sequences of (admissible) curves. 
This section collects all the technical results underlying the proof of convergence to Dissipative Viscosity solutions. 
\par
With our first result, Proposition \ref{super-lemmone},  
we gain insight into the asymptotic behavior of the 
energy-dissipation integrals 
$\int_{t_1^n}^{t_2^n}
\minpartial{\cE}{r}{\teta_n(r)}\,\mdt{\teta} n r\dd r$, where the curves $(\teta_n)_n$ are defined on intervals $[t_1^n,t_2^n]$ shrinking to a singleton $\{ t\}$, whereas in the integrand
$\minpartial{\cE}{\cdot}{\teta_n(\cdot)}\,\mdt{\teta} n {\cdot}$  
the time variable is not fixed. Both Proposition 
 \ref{super-lemmone} 
and a variant of it, 
Proposition \ref{lemma:extension-lemmone},  
will be at the core of the proof of Thm.\  \ref{prop:cost}. 
Their proof is based on a  reparameterization technique, 
combined with careful compactness arguments for the reparameterized
curves. 

\begin{proposition}
\label{super-lemmone}
Assume  \eqref{Ezero}--\eqref{strong-critical}.
Let $t\in[0,T]$, $\rho>0$, and $u_1$, $u_2\in\xfin$ be fixed and let
$(t_1^n)_n$, $(t_2^n)_n$, with $0\leq t_1^n \leq t_2^n \leq T$ for every $n\in \N$, and $(\teta_n)_n\subset\AC
([t_1^n,t_2^n];\Hilbert)$ 
fulfill 
\begin{equation}
\label{new-cost}
 t_1^n,\, t_2^n \to t,  \quad
\teta_n(t_1^n) \to u_1, \ \teta_n(t_2^n) \to u_2\,. 
 \end{equation}
Then, the following implications hold:
\begin{compactenum}
\item
If
\begin{equation}
\label{liminf-zero}
\liminf_{n \to \infty}\int_{t_1^n}^{t_2^n}
\minpartial{\cE}{r}{\teta_n(r)}\,\mdt{\teta} n r \dd r=0,
\end{equation}
then $u_1=u_2$;
\item
In the case $u_1 \neq u_2$, so that 
\begin{equation}
\label{lemmone:caseI}
\liminf_{n \to \infty}\int_{t_1^n}^{t_2^n}
\minpartial{\cE}{r}{\teta_n(r)}\,\mdt{\teta} n r \dd r>0,
\end{equation}
there exists $\teta \in\admis{u_1}{u_2}{t}$ 
such that
\begin{equation}
\label{liminf-lemmone}
\liminf_{n \to \infty}
\int_{t_1^n}^{t_2^n}\minpartial{\cE}{r}{\teta_n(r)}\,\mdt{\teta} n r  \dd r
\,\geq\,
\int_0^1 \minpartial {\cE}{t}{\teta(s)}\,
\|\teta' (s)\| \dd s.
\end{equation}
\end{compactenum}
\end{proposition}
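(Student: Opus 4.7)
\textbf{Setup and Part (1).} Pass to a subsequence so that $I_n := \int_{t_1^n}^{t_2^n}\|\rmD\calE_r(\teta_n(r))\|\,\|\teta_n'(r)\|\,\dd r$ converges to $L = \liminf_n I_n$; we may assume $L < +\infty$, as otherwise both claims are trivial. The chain rule for $r\mapsto\calE_r(\teta_n(r))$, combined with \eqref{P_t} and the Gronwall Lemma, yields $\sup_{r\in[t_1^n,t_2^n]}\calE_r(\teta_n(r))\le C$; by \eqref{coercivita} and \eqref{gronw-conseq} the ranges of all $\teta_n$ therefore lie in a common compact set $K\subset X$. By \eqref{strong-critical}, $\calC(t)\cap K = \{u^1,\dots,u^J\}$ is finite. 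Fix $\delta > 0$ so that the closed balls $\overline{B(u^j,2\delta)}$ are pairwise disjoint and each meets $\calC(t)$ only at $u^j$; on the annuli $A_j := \overline{B(u^j,2\delta)}\setminus B(u^j,\delta)$ and on $U := K\setminus\bigcup_j B(u^j,2\delta)$, which are compact and contain no critical points of $\calE_t$, the continuity of $\rmD\calE$ together with $t_i^n\to t$ produces constants $\alpha,\beta>0$ with $\|\rmD\calE_r\|\ge\alpha$ on each $A_j$ and $\|\rmD\calE_r\|\ge\beta$ on $U$ for all large $n$ and all $r\in[t_1^n,t_2^n]$. \textbf{Part (1)} is now immediate: if $u_1\neq u_2$, choosing $\delta<\tfrac13\|u_1-u_2\|$ forces $\teta_n$ for $n$ large to traverse at least one annulus $A_j$, contributing $\ge\alpha\delta$ to $I_n$; hence $L=0$ implies $u_1=u_2$.

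\textbf{Part (2).} The same crossing estimate bounds the total number of annular crossings of $\teta_n$ by $I_n/(\alpha\delta)$, hence uniformly in $n$. After a further extraction we may therefore assume there exist $N\in\N$ and indices $j_0,\dots,j_N\in\{1,\dots,J\}$ such that every $\teta_n$ visits, in this order, the balls $B(u^{j_k},\delta)$ (with $\teta_n(t_1^n)$ close to $u_1$ and $\teta_n(t_2^n)$ close to $u_2$). On each transit interval $[\sigma_k^n,\tau_k^n]\subset [t_1^n,t_2^n]$ during which $\teta_n$ travels from $\partial B(u^{j_k},\delta)$ through $U$ to $\partial B(u^{j_{k+1}},\delta)$, the lower bound $\|\rmD\calE_r\|\ge\beta$ on $U$ forces $\int_{\sigma_k^n}^{\tau_k^n}\|\teta_n'\|\,\dd r\le I_n/\beta$. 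Reparameterizing each such piece by normalized arc length produces an equi-Lipschitz family with values in $K$; Ascoli--Arzel\`a, applied finitely often, extracts a common subsequence with uniform limits $\teta^k:[0,1]\to K$. Concatenating the $\teta^k$, inserting inside each $\overline{B(u^{j_k},\delta)}$ locally Lipschitz segments that connect the exit/entry points of consecutive transits to the critical point $u^{j_k}$, and reparameterizing on $[0,1]$, we obtain a curve $\teta\in\admis{u_1}{u_2}{t}$ whose finite partition is located at the times at which the constructed curve reaches the $u^{j_k}$.

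\textbf{Liminf inequality and main obstacle.} For \eqref{liminf-lemmone} we restrict the non-negative integrand defining $I_n$ to the union of transit intervals: on each of them, the reparameterized velocities converge weakly in $L^2$, while the uniform continuity of $\rmD\calE$ on $[0,T]\times K$ combined with the uniform convergence of the reparameterized $\tilde\teta_n^k\to\teta^k$ and of the time variable $\phi_n(s)\to t$ yields $\|\rmD\calE_{\phi_n(s)}(\tilde\teta_n^k(s))\|\to\|\rmD\calE_t(\teta^k(s))\|$ uniformly in $s$. Standard lower semicontinuity of $\zeta\mapsto\int_0^1\|\rmD\calE_t(\zeta(s))\|\,\|\zeta'(s)\|\,\dd s$ under these joint modes of convergence, summed over $k=0,\dots,N$, gives \eqref{liminf-lemmone} modulo the contributions of the short inserted segments inside the balls, which are of order $O(\delta)$ and can be absorbed by a diagonal argument as $\delta\downarrow 0$. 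The \textbf{main difficulty} of the proof lies precisely in handling the oscillations of $\teta_n$ inside each ball $\overline{B(u^{j_k},\delta)}$: these can blow up the raw length $\int\|\teta_n'\|\,\dd r$ while contributing negligibly to $I_n$ (since $\|\rmD\calE\|$ is small near critical points), and must be ``collapsed'' in the limit to produce an admissible curve with the \emph{finite} partition required by Definition \ref{main-def-1}.
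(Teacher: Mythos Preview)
Your outline follows the paper's overall strategy---confine the curves to a compact sublevel, exploit the finiteness of $\calC(t)\cap K$, partition the time axis into ``transit'' intervals and ``ball-interior'' intervals, and pass to the limit on the former via Ascoli--Arzel\`a plus lower semicontinuity---but the resolution of the difficulty you correctly flag at the end is incomplete, and this is the heart of the matter.

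The gap is the dependence of the itinerary length on $\delta$.  Your bound on the number of annular crossings is $N(\delta)\le I_n/(\alpha\delta)$, where $\alpha=\alpha(\delta)$ is the minimum of $\|\rmD\calE_r\|$ on the annuli $\overline{B(u^j,2\delta)}\setminus B(u^j,\delta)$.  As $\delta\downarrow 0$ the annuli shrink towards the critical points, so $\alpha(\delta)\to 0$ and this bound blows up.  Consequently your inserted-segment error is not ``$O(\delta)$'' but rather $N(\delta)\cdot O(\delta\cdot\omega(\delta))$ with $\omega(\delta)=\sup_{B(u^j,\delta)}\|\rmD\calE_t\|$; there is no reason for $N(\delta)\,\delta\,\omega(\delta)$, or even $\omega(\delta)/\alpha(\delta)$, to vanish (at degenerate critical points $\alpha(\delta)$ may be $o(\delta)$).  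The phrase ``absorbed by a diagonal argument as $\delta\downarrow 0$'' therefore hides a genuine obstacle: for each $\delta$ you build a \emph{different} admissible curve $\teta^\delta$ with a \emph{different} number of pieces, and you have no mechanism to extract a single limit curve from the family $(\teta^\delta)_\delta$ without already knowing what you are trying to prove.

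The paper circumvents this in two ways that differ from your sketch.  First, it reparameterizes not by arc length but by $s_n(r)=r+\int_{t_1^n}^{r}\|\rmD\calE_\tau(\teta_n)\|\,\|\teta_n'\|\,\dd\tau$; in the new variable the integrand is bounded by $1$ everywhere, and on $K_\delta$ the curves are equi-Lipschitz with constant $1/e_\delta$.  Second---and this is the crucial step you are missing---after throwing away redundant crossings the paper observes that the number $N(n,\delta)$ of transits between \emph{distinct} balls is \emph{monotone non-increasing} as $\delta\downarrow 0$ (because each $\delta_1$-crossing interval is contained in a $\delta_2$-crossing interval for $\delta_2<\delta_1$).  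Being integer-valued and bounded below by $1$, $N(n,\delta)$ therefore stabilizes along a diagonal subsequence $(n_j,\delta_{m_j})$ to a fixed $N$ (cf.\ Remark~\ref{rem:v1} and Step~3).  With $N$ fixed, the limit curve is obtained directly on intervals $(\alpha_k,\beta_k)$ as a $\rmC^{\mathrm{lip}}_{\mathrm{loc}}$-limit; no artificial segments are inserted, and instead the paper proves in Step~5, by re-invoking Part~(1) on shrinking sub-intervals, that the one-sided limits of the constructed $\teta$ at the $\alpha_k,\beta_k$ are exactly the critical points $u^{j_k}$.  This eliminates the $O(\delta)$ error term altogether.
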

Preliminarily, we need the following result.
\begin{lemma}
\label{lemma:auxi-v} 
Let $ K$ be a closed subset of
some sublevel set  $ \subl{\rho}$, with $\rho>0$, 
and suppose that for some $t \in (0,T)$
\begin{equation}\label{basso_unif}
\inf_{u\in K} \minpartial \calE tu>0.
\end{equation}
Then, the $\inf$ in \eqref{basso_unif} is attained, and there exists
$\alpha=\alpha(t)>0$ such that
\begin{equation}
\label{basso-unif-time} \min_{u\in
K,s\in[t-\alpha,t+\alpha]}\minpartial \calE su>0\,.
\end{equation}
\end{lemma}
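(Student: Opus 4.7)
The plan is to observe that the hypotheses on $K$ put us in a standard compactness setting, and both conclusions then follow from the continuity of $\rmD\calE$ granted by \eqref{Ezero}.

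First, I would establish that $K$ is compact. By hypothesis $K\subset\subl\rho$, and by the coercivity assumption \eqref{coercivita} the sublevel set $\subl\rho$ is bounded in $\xfin$. Since $\xfin$ is finite-dimensional and $K$ is closed, $K$ is compact. (This is the one place where finite dimensionality really enters the argument.)

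For the first claim, the map $u\mapsto\minpartial\calE t u=\|\rmD\calE_t(u)\|$ is continuous on $\xfin$ by \eqref{Ezero}, hence continuous on the compact set $K$, so it attains its infimum at some $u^*\in K$; by assumption $\minpartial\calE t{u^*}>0$, so we may set $m:=\min_{u\in K}\minpartial\calE t u>0$.

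For the second claim, I would choose $\alpha_0\in(0,\min\{t,T-t\})$ (possible since $t\in(0,T)$), so that $[t-\alpha_0,t+\alpha_0]\subset[0,T]$. The product $[t-\alpha_0,t+\alpha_0]\times K$ is then a compact subset of $[0,T]\times\xfin$, and by \eqref{Ezero} the map $(s,u)\mapsto\minpartial\calE s u$ is uniformly continuous there. Hence there exists $\alpha\in(0,\alpha_0]$ such that $|\minpartial\calE s u-\minpartial\calE t u|<m/2$ for all $u\in K$ and all $s\in[t-\alpha,t+\alpha]$, which combined with the bound $\minpartial\calE t u\geq m$ yields $\minpartial\calE s u\geq m/2$ uniformly. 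The minimum in \eqref{basso-unif-time} is attained by continuity on $[t-\alpha,t+\alpha]\times K$, and the above bound shows it is strictly positive.

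Alternatively, the second claim admits a direct contradiction argument: if no such $\alpha$ existed, one could pick sequences $s_n\to t$ and $u_n\in K$ with $\minpartial\calE {s_n}{u_n}\to 0$; by compactness of $K$ a subsequence $u_n\to\bar u\in K$ exists, and continuity of $\rmD\calE$ would yield $\minpartial\calE t{\bar u}=0$, contradicting the hypothesis. I do not expect any real obstacle here; the lemma is essentially a routine compactness-plus-continuity statement designed to be invoked later on closed subsets of sublevel sets that stay away from the critical set $\calC(t)$.
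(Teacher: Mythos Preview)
Your proof is correct and follows essentially the same route as the paper: establish compactness of $K$ from \eqref{coercivita} and finite-dimensionality, then use continuity of $\rmD\calE$ from \eqref{Ezero}. The only cosmetic difference is that the paper packages the second step by noting that $s\mapsto\min_{u\in K}\minpartial\calE s u$ is continuous (as the minimum of a continuous function over a compact set) and hence positive near $t$, whereas you use uniform continuity on the product $[t-\alpha_0,t+\alpha_0]\times K$; these are equivalent arguments.
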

\begin{proof}
It follows from \eqref{coercivita} that $K$ is compact, therefore
  $\inf_{u \in K}
\minpartial \calE su  $ is attained for every $s \in [0,T]$, and
 the function $s \mapsto \min_{u \in K} \minpartial \calE su$ is
continuous since $\calE \in \rmC^1 ([0,T]\times \xfin).$  Combining this fact with \eqref{basso_unif},
we conclude \eqref{basso-unif-time}.
\end{proof}
We are now in the position to develop the
\underline{\bf proof of Proposition \ref{super-lemmone}}:  
preliminarily, we observe that there exists $\rho>0$ such that the curves $(\teta_n)_n$ in \eqref{new-cost} fulfill
\begin{equation}
\label{in-a-sublevel}
\teta_n([t_1^n,t_2^n]) \subset \subl\rho \quad \text{for every } n \in \N\,.
\end{equation}
Indeed, 
since $\ene 0{\cdot}$ is continuous, from $ \teta_n(t_i^n) \to u_i$ for $i=1,2$ we deduce that
 $\sup_n  |\ene 0{\teta_n(t_1^n)}| +  |\ene{0}{\teta_n(t_2^n)}| \leq C$.  Hence, $\sup_n \calG(\teta_n(t_1^n)) +  \calG(\teta_n(t_2^n)) \leq C'$ in view of \eqref{gronw-conseq}. We now apply the chain rule along the curve $\teta_n$ to conclude that 
 \[
 \ene t{\teta_n(t)} \leq   \ene{t_1^n}{\teta_n(t_1^n)} + \int_{t_1^n}^t \partial_t \ene{r}{\teta_n(r)} \dd r +  \int_{t_1^n}^{t_2^n} 
 \minpartial \calE r{\teta_n(r)} \| \teta_n'(r)\| \dd r.
 \]
From the above estimate, we immediately conclude via the power estimate \eqref{P_t}, the Gronwall Lemma, and condition \eqref{liminf-zero} in the case $u_1=u_2$ (estimate \eqref{int_maggiorato} ahead  in the case $u_1\neq u_2$, respectively), that $\sup_n  \sup_{t\in [t_1^n,t_2^n]} \ene t{\teta_n(t)} \leq C$. Then,
\eqref{in-a-sublevel} ensues. 
\par\noindent
\textbf{\emph{Ad Claim (1)}:}
By contradiction,
suppose that $u_1\neq u_2$. 
Thanks to \eqref{strong-critical}
the set $\subl\rho \cap \calC(t)$ is finite (since the energy sublevel $\subl\rho$ is compact  in $\xfin $ by \eqref{coercivita}),
hence  there exists $\overline\delta=\overline\delta(t,u_1,u_2)$ such that
\begin{equation}
\label{delf-hat-delta}
B(x,2\delta)\cap B(y,2\delta)=\emptyset,
\ \ \mbox{for every }x,y\in(\mathcal {C}(t){\cap} \subl\rho)\cup\{u_1,u_2\}\mbox{ with }x\neq y \qquad \text{ for all } 0<\delta\leq \overline{\delta}.
\end{equation}
Observe that $u_1$ may well belong to $\mathcal C(t)$ as well as not, and the same for $u_2$.
Let us introduce the compact set $K_\delta$ defined by
\begin{equation}
\label{set-Kdelta}
K_\delta:=\subl\rho\setminus\bigcup_{x\in (\mathcal {C}(t){\cap} \subl\rho) \cup\{u_1,u_2\}}B(x,\delta)
\end{equation}
and remark that $\min_{u\in K_\delta}\| \rmD\mathcal E_t(u)\|>0.$
It follows from Lemma \ref{lemma:auxi-v}  that for some $\alpha=\alpha(t,u_1,u_2)>0$
\begin{equation}
\label{quoted-later} 
e_\delta:=\min_{u\in
K_\delta,r\in[t-\alpha,t+\alpha]}\|\rmD\mathcal E_r(u)\|>0.
\end{equation}
Note that $[t_1^n,t_2^n]\subset[t-\alpha,t+\alpha]$ for every $n$ sufficiently large.
Moreover, from \eqref{new-cost}
and from the definition of $K_\delta$ we obtain that 
$\{r\in[t_1^n,t_2^n]\,:\,\teta_n(r)\in K_\delta\}\neq\emptyset$ for every $n$ large enough, and that
$\teta_n(r_1)\in\pa B(u_1,\delta)$, $\teta_n(r_2)\in\pa B(u_2,\delta)$, for some
$r_1$, $r_2\in\{r\in[t_1^n,t_2^n]\,:\,\teta_n(r)\in K_\delta\}$ with $r_1\neq r_2$.
Thus, by \eqref{quoted-later},
\begin{eqnarray}\label{ineq:lemma1}
\int_{t_1^n}^{t_2^n}\|\rmD\mathcal E_r(\teta_n(r))\| 
\|\teta_n'(r)\|dr\!\!\!&\geq&\!\!\!
 \int\limits_{\{r\in[t_1^n,t_2^n]\,:\,\teta_n(r)\in K_\delta\}}
 \|\rmD\mathcal E_r(\teta_n(r))\| \|\teta_n'(r)\|dr\nonumber\\
\!\!\!&\geq&\!\!\!
                e_\delta\int\limits_{\{r\in[t_1^n,t_2^n]\,:\,\teta_n(r)\in K_\delta\}}\|\teta_n'(r)\|dr,\nonumber\\
\!\!\!&\geq&\!\!\!e_\delta \min_{x,y\in(\mathcal C(t){\cap}K_\delta)\cup\{u_1,u_2\}}(\|x-y\|-2\delta) \doteq \eta.
\end{eqnarray}
Observe that $\eta$ is positive in view of (\ref{quoted-later}) and of the definition of $\delta$
from \eqref{delf-hat-delta}. Thus we have a contradiction with \eqref{liminf-zero}.

\par\noindent
\textbf{\emph{Ad Claim (2)}:} 
Suppose that $u_1 \neq u_2$. 
Up to a subsequence we can suppose that there exists
\begin{equation*}
\lim_{n \to \infty} \int_{t_1^n}^{t_2^n}
\minpartial{\cE}{r}{\teta_n(r)} \, \mdt{\teta}{n}{r}  \dd r =:L_t>0,
\end{equation*}
hence
\begin{equation}\label{int_maggiorato}
\sup_{n \in \N} \int_{t_1^n}^{t_2^n}
 \minpartial{\cE}{r}{\teta_n(r)} \, \mdt{\teta}{n}{r} \dd r \leq C <\infty.
\end{equation}
We split the proof of \eqref{liminf-lemmone} in several steps.
\par
\noindent \textbf{Step $1$: reparameterization.} Let us define, for
every $r\in[t_1^n,t_2^n]$,
\begin{equation*}
s_n(r):=r+\int_{t_1^n}^r \minpartial{\cE}{\tau}{\teta_n(\tau)} \,
\mdt{\teta}{n}{ \tau} \dd \tau.
\end{equation*}
Also, we set
\begin{equation*}
s_1^n:=s_n(t_1^n)=t_1^n,\qquad\qquad s_2^n:=s_n(t_2^n),
\end{equation*}
and note that
\begin{equation*}
s_1^n\to t,\qquad\qquad s_2^n\to(t+L_t)>t.
\end{equation*}
Since $s_n'>0$, we define
\begin{equation*}
r_n(s):=s_n^{-1}(s)\ \ \mbox{and}\ \
\tilde\teta_n(s):=\teta_n(r_n(s))\ \ \mbox{for every}\ \
s\in[s_1^n,s_2^n].
\end{equation*}
Observe that
\begin{equation}\label{tilde_teta_x1x2}
\tilde\teta_n(s_1^n)\to u_1,\qquad\qquad\tilde\teta_n(s_2^n)\to u_2,
\end{equation}
that $(r_n)_n$ is equi-Lipschitz and that
\begin{equation}\label{super_lem_01}
 \minpartial{\cE}{r_n(s)}{\tilde\teta_n(s)} \, \mdt{\tilde\teta}{n}{
s} = 1-\frac1{1+  \minpartial{\cE}{r_n(s)}{\tilde\teta_n(s)} \,
\mdt{\tilde\teta}{n}{ s} }\leq1 \ \ \foraa\,s\in(s_1^n,s_2^n).
\end{equation}
The change of variable formula yields
\begin{equation}\label{super_lem_1}
\int_{t_1^n}^{t_2^n} \minpartial{\cE}{r}{\teta_n(r)} \,
\mdt{\teta}{n}{r}  \dd r=\int_{s_1^n}^{s_2^n}
\minpartial{\cE}{r_n(s)}{\tilde\teta_n(s)} \, \mdt{\tilde\teta}{n}{
s} \dd s.
\end{equation}

\noindent \textbf{Step $2$: localization and equicontinuity
estimates.} 
Let $\overline\delta,\, \delta>0$, $K_\delta$, and $e_\delta$ be as in 
\eqref{delf-hat-delta}, \eqref{set-Kdelta}, \eqref{quoted-later}, and define
%
the open set
\begin{equation*}
A_n^{\delta}:=\left\{s\in(s_1^n,s_2^n)\,:\,\tilde\teta_n(s)\in\mathrm{int}
(K_{\delta}) \right\}.
\end{equation*}
Observe that $A_n^{\delta}\neq\emptyset$ for every $n$ sufficiently
large, in view of the definition of $K_{\delta}$ and of
(\ref{tilde_teta_x1x2}).
We write $A_n^{\delta}$ as the countable union of its connected
components
\begin{equation}\label{super_lem_2}
A_n^{\delta}=\bigcup_{k=1}^{\infty}(a^{\delta}_{n,k},b^{\delta}_{n,k})
\quad \text{with }\ b^{\delta}_{n,k}\leq a^{\delta}_{n,k+1}\ \text{
for all }\ k\in \N.
\end{equation}
Inequality \eqref{super_lem_01}, the definition   \eqref{quoted-later}
of $e_\delta$, and the definition of $a_{n,k}^{\delta}$ and
$b_{n,k}^{\delta}$ imply that
\begin{equation}\label{tilde_teta_equilip}
e_{\delta} \mdt{\tilde\teta}{n}s \leq
\minpartial{\cE}{r_n(s)}{\tilde\teta_n(s)}\, \mdt{\tilde\teta}{n}s
\leq1\quad\foraa\,s\in\left(a_{n,k}^{\delta},b_{n,k}^{\delta}\right).
\end{equation}
Furthermore, it is clear that
\begin{equation}\label{tilde_teta_bordo}
\tilde\teta_n(a^{\delta}_{n,k})\in\pa B(x,\delta),\ \
\tilde\teta_n(b^{\delta}_{n,k})\in\pa B(y
,\delta)\quad\mbox{for some}\ \ x, \, y\in(\mathcal
C(t){\cap} \subl\rho)\cup\{u_1,u_2\}.
\end{equation}
Note that it may happen $x=y$.
 Nonetheless, from now on we will just focus on the case where
$ x\neq y$  in \eqref{tilde_teta_bordo}  and we will
show  that there is a \emph{finite} number of intervals
$(a^{\delta}_{n,k},b^{\delta}_{n,k})$ on which $\tilde\teta_n$ travels  
from one ball to another, centered at a different point in
$(\mathcal C(t){\cap} \subl\rho)\cup\{u_1,u_2\}$. In this way, we will conclude that
the function $\teta$ of the statement consists of a finite number of
$\mathrm{C}_{\mathrm{loc}}^{\mathrm{lip}}$-pieces. To this aim, let
us introduce the set
\begin{equation}
\label{def-b-enne-delta}
\begin{gathered}
B_n^{\delta}:=\bigcup_{(a^{\delta}_{n,k},b^{\delta}_{n,k})
\in\mathfrak{B}_n^{\delta}} (a^{\delta}_{n,k},b^{\delta}_{n,k})
\quad \text{with}
\\
 \mathfrak{B}_n^{\delta}=
\left\{(a^{\delta}_{n,k},b^{\delta}_{n,k})\subset
A_n^{\delta}\,:\, \tilde\teta_n(a^{\delta}_{n,k})\in\pa B(x
,\delta),\ \tilde\teta_n(b^{\delta}_{n,k})\in\pa B(y,\delta)\text{ for } x, \, y\in\mathcal
C(t){\cap} \subl\rho, \   x \neq y\right\}.
\end{gathered}
\end{equation}
From \eqref{super_lem_1}, \eqref{tilde_teta_equilip}, and the
definition of $A_n^{\delta}$ and $B_n^{\delta}$, we
obtain
\begin{eqnarray}
\int_{t_1^n}^{t_2^n} \minpartial{\cE}{r}{\teta_n(r)} \,
\mdt{\teta}{n}{r} \dd r
   &\geq&\int_{A_n^{\delta}}
   \minpartial{\cE}{r_n(s)}{\tilde\teta_n(s)} \,  \mdt{\tilde\teta}{n}{s}
\dd
   s\nonumber\\
  &\geq& e_{\delta}\int_{B_n^{\delta}}
   \mdt{\tilde\teta}{n}{s} \dd
  s\nonumber\\
  &\geq& e_{\delta}\sum_{(a^{\delta}_{n,k},b^{\delta}_{n,k}) \in  \mathfrak{B}_n^{\delta}}\overline m,
\label{bar-m}
\end{eqnarray}
where
$0<\overline m:= \min_{\stackrel{x,y\in\mathcal
C(t)\cup\{u_1,u_2\}}{\pij xt \neq \pij y t}}(\|x-y\|-2\overline\delta)$.
Inequality \eqref{bar-m}, together with estimate
\eqref{int_maggiorato}, implies that $B_n^{\delta}$ has a finite
number $N(n,\delta)$ of components, more precisely
\begin{equation}\label{stima_N_n_d}
N(n,\delta)\leq\frac C{e_{\delta}\overline m}\ \mbox{ for every}\
0<\delta\leq\overline\delta,\ n\in\N,
\end{equation}
with $C$ from \eqref{int_maggiorato}. In what follows, we will show
that  we may take $ N(n,\delta)$ to be bounded uniformly w.r.t.\  $n
\in \N$ and $\delta>0$ (cf.\ \eqref{very-strong-below} ahead). 

\par
For this, we need to fix some preliminary remarks. In view of the
ordering assumed in \eqref{super_lem_2}, we have that
\begin{equation}\label{re-order}
\tilde{\teta}_n(a^{\delta}_{n,1}) \in \partial B(u_1
,\delta), \qquad \tilde{\teta}_n(b^{\delta}_{n,N(n,\delta)}) \in
\partial B(u_2,\delta).
\end{equation}
Also, observe that, up to throwing some of the intervals
$(a^{\delta}_{n,k},b^{\delta}_{n,k})\in\mathfrak{B}_n^{\delta}$
away, we may suppose that for every fixed $x\in
(\mathcal{C}(t){\cap} \subl\rho)\cup\{u_1,u_2\}$, if
$\tilde\teta(a^{\delta}_{n,k})\in\partial B(x,\delta)$ for
some $k$, then $\tilde\teta(b^{\delta}_{n,m})\notin\partial B(x,\delta)$ for every $m>k$. Finally, note that for all
$(a^{\delta}_{n,k}, b^{\delta}_{n,k}) \in \mathfrak{B}_n^{\delta}$
there holds
\begin{equation}
\label{e:btw} \frac1{e_\delta} ( b^{\delta}_{n,k} - a^{\delta}_{n,k}
) \geq \|\tilde{\teta}_n(b^{\delta}_{n,k}) -
\tilde{\teta}_n(a^{\delta}_{n,k})   \|  \geq \overline m,
\end{equation}
where the first inequality ensues from \eqref{tilde_teta_equilip},
 and the second one is due to the definition of $\overline m$.
\begin{remark}
\upshape \label{rem:v1} Observe that a bound for  $ N(n,\delta)$,
uniform with respect to  $n \in \N$ \emph{and} $\delta>0$, cannot be
directly deduced from \eqref{stima_N_n_d} since the constant
$C/(e_{\delta}\overline m)$ grows as $\delta$ decreases. Indeed,
$e_{\delta}$ goes to zero as $\delta\to0$.
\end{remark}

\noindent \textbf{Step $3$: compactness.} We will prove the
following
\\
\emph{\underline{Claim}:  there exist a sequence
 $(n_j,\delta_{m_j})_j$, such that
\begin{equation}
\label{very-strong-below} N(n_j,\delta_{m_j})=N \qquad\mbox{for
every }j \in \N,
\end{equation}
 a partition
\begin{equation}\label{partizione}
\{
t\leq\alpha_1<\beta_1\leq\alpha_2<\beta_2\leq...\leq\alpha_N<\beta_N\leq
t+L_t  \}\quad \text{of}\quad[t,t+L_t], \ \text{ and }
\end{equation}
a curve 
$\teta\in\mathrm{C}_{\mathrm{loc}}^{\mathrm{lip}}
\left(\bigcup_{k=1}^N(\alpha_k,\beta_k);\Hilbert\right)$
such that, in the limit $j\to\infty$,
\begin{equation}
\label{tipo_di_convergenza}
\begin{aligned}
& \tilde\teta_{n_j}\to\teta\quad\mbox{uniformly on compact subsets
of }
                   \bigcup_{k=1}^N(\alpha_k,\beta_k),
                   \\
   &
                   \tilde\teta'_{n_j} \weaksto \teta'  \quad\mbox{in $L^\infty (\alpha_k+\eta,\beta_k-\eta;\Hilbert)$
                   \quad for every $\eta>0$ and $k=1,\ldots,N$.}
 \end{aligned}
\end{equation}
Therefore,
$\teta(s) \in \subl\rho$ for every $s\in \bigcup_{k=1}^N(\alpha_k,\beta_k)$.
} 
\par
First of all, let us observe that, since $(N(n,\delta))_n$ is a
bounded sequence by \eqref{stima_N_n_d}, there exists a subsequence
$(n_l^{\delta})_l$ and an integer $N(\delta)$ such that
\begin{equation}
\label{citata-piu-sotto}
  N(n_l^{\delta},\delta)\to N(\delta)\quad\mbox{as }l\to\infty.
\end{equation}
  Clearly, since $u_1 \neq u_2$, taking \eqref{tilde_teta_x1x2} into account we see that $N(\delta)\geq1$
for every $0<\delta\leq\overline\delta$.
 Also, for every fixed $n\in \N$, we have that
\begin{equation}
\label{N_decresce}
N(n,\delta)\ \mbox{ decreases as } \delta \mbox{ decreases}.
\end{equation}
Indeed, if $\delta_1>\delta_2$, then $K_{\delta_1}\subset
K_{\delta_2}$ and in turn $B_n^{\delta_1}\subset B_n^{\delta_2}$.
This means that, for every $k\in\{1,...,N(n,\delta_1)\}$, we have 
\begin{equation}
\label{piccina}
(a_{n,k}^{\delta_1},b_{n,k}^{\delta_1})\subset
(a_{n,j_k}^{\delta_2},b_{n,j_k}^{\delta_2}),  
\qquad\mbox{for some}\quad j_k\in\{1,...,N(n,\delta_2)\}. 
\end{equation}
At the same time,
$(a_{n,k}^{\delta},b_{n,k}^{\delta})\subset[s_1^n,s_2^n]$ for
every $0<\delta\leq\overline\delta$, for every $n$, and for every
$k=1,...,N(n,\delta)$, and there holds $[s_1^n,s_2^n]\to[t,t+L_t]$ as
$n\to\infty$. Therefore we may suppose that there exists $\eta>0$
such that for every $n \in \N $ and $k=1,...,N(n,\delta)$ there
holds $(a_{n,k}^{\delta},b_{n,k}^{\delta})\subset [t-\eta,t+L_t+\eta]$.
This fact, coupled with \eqref{piccina}, gives \eqref{N_decresce}.

In order to prove \eqref{very-strong-below}, we develop the
following diagonal argument. Consider a sequence
$(\delta_m)_m\subset(0,\overline\delta]$ such that $\delta_m\to0$,
as $m\to\infty$. Using \eqref{citata-piu-sotto} and
\eqref{N_decresce}, it is possible to construct for each $m\in \N$ a
subsequence $(n_l^m)_l$, where $n_l^m$ is a short-hand notation for
$n_l^{\delta_m}$, such that
\begin{equation}\label{diag_1}
N(n_l^m,\delta_m)\to N(\delta_m)\qquad\mbox{as}\qquad
l\to\infty\quad\mbox{ for every }m \in \N,
\end{equation}
\begin{equation}\label{diag_2}
N(\delta_1)\geq N(\delta_2)\geq...\geq N(\delta_m)\geq...\geq 1,
\end{equation}
and
\begin{equation}\label{diag22}
a_{n_l^m,k}^{\delta_m}\to \alpha_k^m,\quad
b_{n_l^m,k}^{\delta_m}\to\beta_k^{m}\quad \mbox{as
}l\to\infty\quad\mbox{for all }k=1,...,N(\delta_m)  \mbox{ and  all
}m \in \N.
\end{equation}
Due to \eqref{e:btw}, one has $\alpha_k^m< \beta_k^{m}$ for all
$k=1,...,N(\delta_m)$ and all $m \in \N$. Now, since
$(N(\delta_m))_m$ is a decreasing sequence of integers, we may
suppose that
\begin{equation}\label{diag_3}
\mbox{there exists }\ N\in \N\mbox{ such that }\ N(\delta_m)=N\
\mbox{ for every }m \in \N,
\end{equation}
and that there exist the limits
\begin{equation}\label{diag33}
\alpha_k:=\lim_{m\to\infty}\alpha_k^m,\qquad\beta_k:=\lim_{m\to\infty}\beta_k^m
\qquad\mbox{for every }k=1,...,N.
\end{equation}
Note that the points $\alpha^k, \, \beta^k$ satisfy
\eqref{partizione} with $N$ from \eqref{diag_3}.
Hence, choose $k\in\{1,...,N\}$ and observe that for every $j\in\N$
arbitrarily large there exists $m_j$ and $l_j$ such that
\begin{equation}\label{interv_fisso}
\left[\alpha_k+\frac1j,\beta_k-\frac1j\right]\subset
\left(a_{n_l^{m_j},k}^{\delta_{m_j}},b_{n_l^{m_j},k}^{\delta_{m_j}}\right)\qquad\text{for
every $l\geq l_j$,}
\end{equation}
and
\begin{equation}\label{super-lemma:nuova}
\left|a_{n_l^{m_j},k}^{\delta_{m_j}}-\alpha_k^{m_j}\right|+
\left|b_{n_l^{m_j},k}^{\delta_{m_j}}-\beta_k^{m_j}\right|\leq\frac1{m_j}\qquad\text{for
every $l\geq l_j$.}
\end{equation}
Moreover, we can suppose that $m_j$, $l_j\to\infty$, as
$j\to\infty$. We combine \eqref{interv_fisso} with  estimate
\eqref{tilde_teta_equilip} for $(\tilde{\teta}'_n)_n$
 and the fact that $\tilde{\teta}_n(s) \in \subl\rho$ for every $s\in [s_1^n,s_2^n]$. The
Arzel\`a-Ascoli Theorem
ensures  that, up to a subsequence,
\begin{subequations}
\label{cvg_sotto}
\begin{align}\label{cvg_sotto-1}
&
\begin{aligned}
& \tilde\teta_{n_{l}^{m_j}}  \to\teta\quad\mbox{in}\quad
\rmC^0\left(
\left[\alpha_k+\frac1j,\beta_k-\frac1j\right];\Hilbert\right) \quad
\text{as } l \to \infty,
\end{aligned}
\\
& \label{cvg_sotto-2}
\begin{aligned}
 \tilde\teta'_{n_{l}^{m_j}}  \weaksto \teta' \quad\mbox{in }
L^\infty \left(\alpha_k+\frac1j,\beta_k-\frac1j;\Hilbert\right)\quad
\text{as } l \to \infty
\end{aligned}
\end{align}
\end{subequations}
for some
$\teta\in\mathrm{C}^{\mathrm{lip}}\left(\left[\alpha_k+\frac1j,\beta_k-\frac1j\right];\Hilbert\right)$.

If at each step $j$ we extract a subsequence from the previous one,
we may obtain a sequence $(n_{l_j}^{m_j})_j$, which we relabel by
$(n_j)_j$, and a unique
$\teta\in\mathrm{C}_{\mathrm{loc}}^{\mathrm{lip}}\left(\bigcup_{k=1}^N(\alpha_k,\beta_k);\Hilbert\right)$,
such that for all $ j \in \N $ and $k\in\{1,...,N\}$ there holds
\begin{align}\label{diag_4}
&
\left[\alpha_k+\frac1j,\beta_k-\frac1j\right]\subset \left(\tilde
a_{j,k},\tilde b_{j,k}\right),\quad\mbox{where}\quad \tilde
a_{j,k}:=a_{n_j,k}^{\delta_{m_j}},\ \ \tilde
b_{j,k}:=b_{n_j,k}^{\delta_{m_j}},
\\
& \label{convergenza_j} \|\tilde\teta_{n_j}(s)-
\teta(s)\|<\frac1j\quad\mbox{for every }s\in
\left[\alpha_k+\frac1j,\beta_k-\frac1j\right]\,.
\end{align}
Therefore, we have  proved
\eqref{very-strong-below}--\eqref{tipo_di_convergenza}. From
(\ref{diag33}) and (\ref{super-lemma:nuova}) we obtain also that
\begin{equation}\label{super_lem:nuova2}
\tilde a_{j,k}\to\alpha_k,\qquad\tilde
b_{j,k}\to\beta_k\quad\mbox{as }j\to\infty,
\end{equation}
where $\tilde a_{j,k}$, $\tilde b_{j,k}$ are defined in
(\ref{diag_4}). These observations will be useful in Step 5.

\begin{remark}
\upshape \label{rmk:v2} Let us recall   (cf.\
\eqref{very-strong-below}), that $N$ is the number of the pieces of
the trajectory of $\tilde\teta_{n_j}$ which go from $\pa B(x
,\delta_{m_j})$ to $\pa B(y ,\delta_{m_j})$, for some $x, \,
y\in (\mathcal {C}(t){\cap} \subl\rho)\cup\{u_1,u_2\}$ with $x\neq y$. Thus, we have so
far excluded that, for example, on some interval $(\tilde
a_{j,k},\tilde b_{j,k})$ the trajectory of $\tilde\teta_{n_j}$ runs
from $\pa B(x,\delta_{m_j})$ to $\pa B(x,\delta_{m_j})$. 
Moreover, so far we have overlooked what happens to the trajectory
of $\tilde\teta_{n_j}$ on the interval $[\tilde b_{j,k},\tilde
a_{j,k+1}]$. It is not difficult to imagine that, if
$\beta_k<\alpha_{k+1}$ some
``loops'' around a certain 
connected component of $(\mathcal {C}(t){\cap} \subl\rho)\cup\{u_1,u_2\}$  may have been
created by the trajectories of $\tilde\teta_{n_j}$ on $[\tilde
b_{j,k},\tilde a_{j,k+1}]$ as $j\to\infty$. Note that we cannot
deduce that the number of these loops is definitely bounded, as we
have done for $N(n_j,\delta_{m_j})$.
\end{remark}

 \noindent
 \textbf{Step $4$: passage to the limit.}
In order to take the limit of the integral term in
\eqref{liminf-lemmone}, we observe that
\begin{equation}
\label{catenone}
\begin{aligned}
\int_{t_1^{n_j}}^{t_2^{n_j}} \minpartial{\cE}{r}{\teta_{n_j}(r)}\,
\mdt{\teta}{n_j}{r} \dd r
 & \geq\sum_{k=1}^N\int_{\tilde a_{j,k}}^{\tilde b_{j,k}}
  \minpartial{\cE}{r_{n_j}(s)}{\tilde\teta_{n_j}(s)}\, \mdt{\tilde\teta}{n_j}{s} \dd
  s
    \\
     & \geq
    \sum_{k=1}^N\int_{\alpha_k +1/j}^{\beta_k -1/j}
 \minpartial{\cE}{r_{n_j}(s)}{\tilde\teta_{n_j}(s)}\, \mdt{\tilde\teta}{n_j}{s} \dd s,
    \end{aligned}
\end{equation}
where we have used  \eqref{very-strong-below}  and \eqref{diag_4}.
We now pass to the limit as $j \to \infty$ in \eqref{catenone}.
Observe that, since $(r_{n_j}(s))_j\subset[t_1^{n_j},t_2^{n_j}]$
for every $s\in[s_1^{n_j},s_2^{n_j}]$, then $r_{n_j}(s)\to t$ as
$j\to\infty$. 
Hence,  
the first convergence in \eqref{tipo_di_convergenza}
yields
\begin{equation}\label{liminf-ioffe}\begin{aligned}
 \lim_{j \to \infty} \minpartial{\cE}{r_{n_j}(s)}{\tilde\teta_{n_j}(s)}   = \minpartial{\cE} t{\teta(s)}  \text{ for every } s \in
[\alpha_k+\eta,\beta_k-\eta], \ \eta>0,   \ k=1,\ldots,
N.\end{aligned}
\end{equation}
Combining \eqref{liminf-ioffe} with the second of  \eqref{tipo_di_convergenza} and applying
Ioffe's Theorem \cite{Ioffe},
we have that
\begin{equation}
\label{eq:towards-integr} \liminf_{j\to\infty} \int_{\alpha_k
+1/j}^{\beta_k -1/j}
  \minpartial{\cE}{r_{n_j}(s)}{\tilde\teta_{n_j}(s)}\, \mdt{\tilde\teta}{n_j}{s} \dd
  s   \geq  \int_{\alpha_k +\eta}^{\beta_k -\eta}  \minpartial \calE t{\teta(s)} \, \|\teta'(
  s)\| \dd s
\end{equation}
   for all $ \eta>0, \ k=1,\ldots, N.$
From \eqref{eq:towards-integr} and \eqref{int_maggiorato}
 it follows that the map $s \mapsto  \minpartial \calE t{\teta(s)} \, \|\teta'( s)\| $ is integrable on $(\alpha_k,\beta_k) $ for all $k=1,\ldots, N$. Summing up,
we conclude that
\begin{equation}
\begin{aligned}\label{liminf-quasi}
 & \liminf_{j\to\infty}\int_{t_1^{n_j}}^{t_2^{n_j}}
 \minpartial{\cE}{r}{\teta_{n_j}(r)}\, \mdt{\teta}{n_j}{r} \dd r
 \geq\sum_{k=1}^N\int_{\alpha_k}^{\beta_k}
 \minpartial \calE t{\teta(s)} \, \|\teta'( s)\| \dd
 s.
\end{aligned} \end{equation}

\noindent \textbf{Step $5$: conclusion of the proof of Proposition \ref{super-lemmone}.} Relying on the 
previously proved part (1) of the statement, the first of (\ref{tipo_di_convergenza}),
and the inclusion in \eqref{diag_4}, we will now show that
\begin{equation}\label{lim_teta_1}
\lim_{s\to\alpha_1^+} \teta(s) =u_1
,\qquad\lim_{s\to\beta_N^-} \teta(s) =u_2
\end{equation}
and that
\begin{equation}\label{lim_teta_2}
\lim_{s\to\beta_k^-} \teta(s) =\lim_{s\to\alpha_{k+1}^+}\teta(s) = x  \quad\mbox{ for
some }x\in(\mathcal C(t){\cap} \subl\rho)\cup\{u_1,u_2\},
\end{equation}
for every $k=1,...,N-1$.
Let us only check the first limit in \eqref{lim_teta_1}, since the
other limits can be verified in a similar way.
Let $(s_i)_i\subset(\alpha_1,\beta_1)$ be a sequence such that
$s_i\to\alpha_1^+$ as $i \to \infty$. We want to prove that
\begin{equation}\label{nuova_da_dim}
\lim_{i\to\infty}\teta(s_i) = u_1.
\end{equation}
Now, let us fix $i \in \N$: the first of (\ref{tipo_di_convergenza})
gives that
$\tilde\teta_{n_j}(s_i)\to\teta(s_i)$ as $j\to\infty$. 
 In particular,
there exists a strictly increasing sequence $(j_i)_i$ such that
\begin{equation}\label{lemmone:nuova22}
\|\tilde\teta_{n_{j_i}}(s_i) -  \teta(s_i)\| \leq\frac1i
\qquad\mbox{for every }i \in \N.
\end{equation}
Note that $\tilde a_{j_i,1}\to\alpha_1$ as $i\to\infty$, in view of
(\ref{super_lem:nuova2}). Moreover, from the definition
(\ref{diag_4}) of $\tilde a_{j_i,1}$ and \eqref{re-order} it follows
that
\begin{equation}\label{lemmone:nuova25}
\lim_{i\to\infty}\tilde\teta_{n_{j_i}}(\tilde a_{j_i,1}) = u_1\,.
\end{equation}
Next, observe that from (\ref{super_lem_01}) and from the fact that
$s_i$, $\tilde a_{j_i,1}\to\alpha_1$ as $i\to\infty$, we have that
\begin{equation}
\label{convergences-integrals} \int_{s_i}^{\tilde a_{j_i,1}}
\minpartial{\cE}{r_{n_{j_i}}(s)}{\tilde\teta_{n_{j_i}}(s)} \, \mdt
{\tilde \teta}{n_{j_i}}{s}\dd s \leq|s_i-\tilde
a_{j_i,1}|\to0\quad\mbox{as }i\to\infty.
\end{equation}
Also, we have that
\begin{equation}
\label{citata-dopo-oh-yes}
\int_{s_i}^{\tilde a_{j_i,1}}
 \minpartial{\cE}{r_{n_{j_i}}(s)}{\tilde\teta_{n_{j_i}}(s)} \, \mdt {\tilde \teta}{n_{j_i}}{s}\dd s
 =
\int_{r_i}^{\tilde r_i}
\minpartial{\cE}{r}{\teta_{n_{j_i}}(r)}\,
\mdt {\teta}{n_{j_i}}{r}\dd r,
\end{equation}
for some $(r_i)_i$, $(\tilde
r_i)_i\subset\left[t_1^{n_{j_i}},t_2^{n_{j_i}}\right]$, where
\begin{equation}\label{lemmmone:nuova3}
\teta_{n_{j_i}}(r_i)=\tilde\teta_{n_{j_i}}(s_i),\quad
\teta_{n_{j_i}}(\tilde r_i)=\tilde\teta_{n_{j_i}}(\tilde
a_{j_i,1})\qquad\mbox{for every }i \in \N.
\end{equation}
Furthermore, we can suppose that, up to a subsequence,
$r_i\leq\tilde r_i$ for every $i$, and that
\begin{equation}
\label{convergences-1-hat} \teta_{n_{j_i}}(r_i)\to\hat
x\qquad\mbox{for some }\hat x\in \xfin.
\end{equation}
We combine
this fact with the limit
\begin{equation*}
\teta_{n_{j_i}}(\tilde r_i)\to u_1,\qquad\mbox{as }i\to\infty
\end{equation*}
which comes from (\ref{lemmone:nuova25}) and the second of (\ref{lemmmone:nuova3}),
and apply Proposition \ref{super-lemmone} (1) to the sequence $(\teta_{n_{j_i}})_i$ on the shrinking interval $[r_i,\tilde{r}_i]$, using that 
$\int_{r_i}^{\tilde r_i}
\minpartial{\cE}{r}{\teta_{n_{j_i}}(r)}\,
\mdt {\teta}{n_{j_i}}{r}\dd r \to 0$ by \eqref{citata-dopo-oh-yes}. 
Therefore,
\begin{equation}\label{lemmone:nuova4}
\tilde\teta_{n_{j_i}}(s_i)=\teta_{n_{j_i}}(r_i)\to u_1\qquad\mbox{as }i\to\infty,
\end{equation}
Inequality (\ref{lemmone:nuova22}) and convergence (\ref{lemmone:nuova4})
imply (\ref{nuova_da_dim}).

By the limits in \eqref{lim_teta_1} and \eqref{lim_teta_2} we can
trivially extend $\teta$ to the whole interval $[\alpha_1,\beta_N]$
and obtain, from (\ref{liminf-quasi}), that
\begin{equation*}
\liminf_{n \to \infty} \int_{t_1^n}^{t_2^n}
\minpartial{\cE}{r}{\teta_n(r)} \, \|\teta'_{n}(r)\| \dd r \geq
\int_{\alpha_1}^{\beta_N} \minpartial {\cE}{t}{\teta(s)} \,
\|\teta'( s)\|
 \dd s.
\end{equation*}
Thus, we have deduced the $\liminf$-inequality 
\eqref{liminf-lemmone},  with a curve defined
on the interval $[a,b]=[\alpha_1,\beta_N]$.

Finally, by  the scaling
invariance of the integral on the right-hand side of
\eqref{liminf-lemmone}, we can reparameterize the limiting curve
$\teta$ in such a way as to have it defined on the interval $[0,1]$,
in accord with the definition \eqref{def-admissible-class-simpler} of
admissible curves.
 This concludes the proof.
\QED

We now give a variant of Proposition \ref{super-lemmone}, in which the curves $\teta_n$
belong to the class $\admis{u_1^n}{u_2^n}{t}$, 
for $t\in [0,T]$ fixed, and   with
$(u_1^n), \, (u_2^n)\subset \xfin$ given sequences, and the
integrands $\minpartial{\cE}{r}{\teta_n(r)}\, \mdt{\teta} n r $  in
\eqref{liminf-lemmone} are replaced by
$\minpartial{\cE}{t}{\teta_n(r)}\, \mdt{\teta} n r $. 
\begin{proposition}
\label{lemma:extension-lemmone}
Assume \eqref{Ezero}--\eqref{strong-critical}.
Given $t \in [0,T]$, $\rho>0$,
and $u_1$, $u_2\in\domainenergy$,
let $(u_1^n)_n,\, (u_2^n)_n $ fulfill 
$u_1^n\to u_1, $ $u_2^n\to u_2$, 
and let $\teta_n\in\admis{u_1^n}{u_2^n}{t}$ for every $n$.
The following two implications hold:
\begin{compactenum}
\item
If
$
\liminf_{n \to \infty}\int_0^1
\minpartial{\cE}{t}{\teta_n(s)}\,\mdt{\teta} n s \dd s=0
$,
then $u_1=u_2$;
\item
If $u_1\neq u_2$, so that
\begin{equation*}
\liminf_{n \to \infty}\int_0^1
\minpartial{\cE}{t}{\teta_n(s)}\,\mdt{\teta} n s \dd s>0,
\end{equation*}
then there exists  $\teta \in\admis{u_1}{u_2}{t}$
such that
\begin{equation}
\label{liminf-cost}
\liminf_{n \to \infty}
\int_0^1 \minpartial{\cE}{t}{\teta_n(s)}\,\mdt{\teta} n s  \dd s
\,\geq\,
\int_0^1 \minpartial {\cE}{t}{\teta(s)}\,
\|\teta' (s)\| \dd s.
\end{equation}
\end{compactenum}
\end{proposition}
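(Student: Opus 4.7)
The proof of Proposition \ref{lemma:extension-lemmone} will closely mirror that of Proposition \ref{super-lemmone}, exploiting the essential simplification that the time variable is now \emph{fixed} at $t$ rather than varying with $r_n(s)$. As a consequence, the uniform lower bound provided by Lemma \ref{lemma:auxi-v} will be superfluous: on any compact set $K_\delta\subset\subl\rho$ avoiding $\mathcal{C}(t)\cap\subl\rho$ and $\{u_1,u_2\}$, the bound $e_\delta:=\min_{u\in K_\delta}\|\rmD\cE_t(u)\|>0$ follows directly from the continuity of $\|\rmD\cE_t\|$, the compactness of $K_\delta$, and assumption \eqref{strong-critical}. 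A minor technical point is that admissible curves are only piecewise locally Lipschitz (not globally $\AC$); this is harmless because the reparameterization below is well-defined on each piece and the arguments only use piecewise Lipschitz estimates.

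\textbf{Claim (1).} I would argue by contradiction, assuming $u_1\neq u_2$. First, using the chain rule along admissible curves (Lemma \ref{chain_facile}), together with the power control \eqref{P_t} and Gronwall's lemma, one shows that the curves $\teta_n$ all lie in a common sublevel $\subl\rho$. For $\delta\in(0,\overline\delta]$ with $\overline\delta$ as in \eqref{delf-hat-delta}, define $K_\delta$ as in \eqref{set-Kdelta}. Since each $\teta_n$ connects $u_1^n\to u_1$ to $u_2^n\to u_2$, for $n$ large enough $\teta_n$ must cross $K_\delta$ between $\partial B(u_1,\delta)$ and $\partial B(u_2,\delta)$; as in \eqref{ineq:lemma1} this yields
\[
\int_0^1 \|\rmD\cE_t(\teta_n(s))\|\|\teta_n'(s)\|\dd s \,\geq\, e_\delta\cdot\overline m\,>\,0,
\]
contradicting the vanishing of the liminf.

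\textbf{Claim (2).} I would set $L_t:=\liminf_n \int_0^1\|\rmD\cE_t(\teta_n)\|\|\teta_n'\|\dd s\in(0,\infty)$ (the case $L_t=+\infty$ is trivial), extract a subsequence realizing the liminf, and define the reparameterization
\[
s_n(r):=r+\int_0^r \|\rmD\cE_t(\teta_n(\tau))\|\|\teta_n'(\tau)\|\dd \tau, \qquad \tilde\teta_n(s):=\teta_n(s_n^{-1}(s))
\]
on $[0,s_n(1)]$, with $s_n(1)\to 1+L_t$. As in \eqref{tilde_teta_equilip}, the identity $\|\rmD\cE_t(\tilde\teta_n(s))\|\|\tilde\teta_n'(s)\|\leq 1$ and the bound $\|\rmD\cE_t\|\geq e_\delta$ on $K_\delta$ yield the equi-Lipschitz estimate $\|\tilde\teta_n'(s)\|\leq 1/e_\delta$ on $\{s:\tilde\teta_n(s)\in\mathrm{int}(K_\delta)\}$. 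The diagonal construction of Steps 2--3 in the proof of Proposition \ref{super-lemmone} then provides, for some uniform integer $N\geq 1$, a partition $0\leq\alpha_1<\beta_1\leq\cdots\leq\alpha_N<\beta_N\leq 1+L_t$ and a curve $\teta\in\mathrm{C}_{\mathrm{loc}}^{\mathrm{lip}}(\bigcup_k(\alpha_k,\beta_k);X)$ with $\tilde\teta_{n_j}\to\teta$ locally uniformly and $\tilde\teta'_{n_j}\weaksto\teta'$ in $L^\infty$ on compacta.

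\textbf{Limit passage and conclusion.} Applying Ioffe's theorem on each $(\alpha_k+\eta,\beta_k-\eta)$ — which is in fact cleaner here than in Proposition \ref{super-lemmone} since the weight $u\mapsto\|\rmD\cE_t(u)\|$ does not depend on $r_n(s)$ — and summing over $k$, I obtain
\[
\liminf_{n\to\infty}\int_0^1\|\rmD\cE_t(\teta_n(s))\|\|\teta_n'(s)\|\dd s \,\geq\, \sum_{k=1}^N \int_{\alpha_k}^{\beta_k}\|\rmD\cE_t(\teta(s))\|\|\teta'(s)\|\dd s.
\]
To identify $\teta$ as an element of $\admis{u_1}{u_2}{t}$, I would invoke the previously proved Claim (1) on shrinking intervals near the endpoints $\alpha_k,\beta_k$, exactly as in Step 5 of Proposition \ref{super-lemmone}, to show that $\teta$ admits limits at these points lying in $(\mathcal{C}(t)\cap\subl\rho)\cup\{u_1,u_2\}$, with $\teta(\alpha_1^+)=u_1$ and $\teta(\beta_N^-)=u_2$. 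A final reparameterization to $[0,1]$, exploiting the scaling invariance of the integral, completes the proof. \textbf{Main obstacle.} The only delicate point is the diagonal argument establishing that the number of ``transition pieces'' $N(n_j,\delta_{m_j})$ can be made uniformly bounded in both $n$ and $\delta$ (cf.\ Remark \ref{rem:v1}); this was already settled in Proposition \ref{super-lemmone} and transfers here verbatim, since it depends only on the sublevel structure and on \eqref{strong-critical}, and not on the time dependence that has now been removed.
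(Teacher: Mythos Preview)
Your proposal is correct and takes essentially the same approach as the paper. Two minor differences are worth noting. First, the paper uses the reparameterization $s_n(r)=\int_0^r\|\rmD\cE_t(\teta_n(\tau))\|\|\teta_n'(\tau)\|\dd\tau$ without your additive $+r$ term; since the domain $[0,1]$ is fixed and does not shrink here, the extra term is unnecessary (though harmless). Second, and more substantively, the paper addresses the piecewise-$\mathrm{C}_{\mathrm{loc}}^{\mathrm{lip}}$ structure of the admissible curves $\teta_n$ explicitly rather than dismissing it: it defines the localization sets $A_n^{i,\delta}$ and $\mathfrak{B}_n^{i,\delta}$ separately on each of the $M_n$ pieces of $\teta_n$, and then derives the joint bound $M_n\,N(i,n,\delta)\leq C/(e_\delta\overline m)$ controlling simultaneously the number of pieces $M_n$ and the number of transitions per piece. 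Your global treatment works for the reason you hint at but do not spell out: the interior partition points of $\teta_n$ lie in $\mathcal{C}(t)$, hence outside $K_\delta$, so each connected component of $A_n^\delta$ automatically lies inside a single $\mathrm{C}_{\mathrm{loc}}^{\mathrm{lip}}$-piece, and the equi-Lipschitz estimate \eqref{tilde_teta_equilip} then carries over without modification.
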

\begin{proof}
We will only  sketch the proof, 
dwelling on the differences with the argument for  Proposition
\ref{super-lemmone}.

\par
By the very same arguments developed at the beginning of Prop.\ 
\ref{super-lemmone}, we conclude that the (images of)  all  the  curves 
$\teta_n$ in fact lie in some energy sublevel. 
\par The proof of 
\emph{\textbf{Claim (1)}} follows the very same lines as for Prop.\   
\ref{super-lemmone}.

\emph{\textbf{Ad Claim (2):}}
By definition of $\admis{u_1^n}{u_2^n}{t,\tilde\rho}$, we have that
there exists a partition $0=\tau_0^n<\tau_1^n<...<\tau_{M_n}^n=1$
such that $\teta_n(0) =u_1^n$, $\teta_n(1) =u_2^n$,
$\teta_n|_{(\tau_i^n,\tau_{i+1}^n)} \in
\mathrm{C}_{\mathrm{loc}}^{\mathrm{lip}}
((\tau_i^n,\tau_{i+1}^n);\Hilbert) $ for all $ i=0,\ldots, M_n-1,$
and the curves $\teta_n|_{(\tau_i^n,\tau_{i+1}^n)}$ connect
different connected components of
$\mathcal{C}(t) \cap \subl{\tilde\rho}$.
In analogy with the proof of Prop.\ \ref{super-lemmone}, we use
the rescaling $r_n$ defined as the inverse of the function
$
s_n(r):=\int_0^r \minpartial{\cE}{t}{\teta_n(s)} \, \mdt \teta n s
\dd s,$
for $r\in[a,b],
$
and set $\tilde\teta_n(s):=\teta_n(r_n(s))$ for every
$s\in\left[\tilde a_n,\tilde b_n\right]$, where $\tilde
a_n:=r_n^{-1}(0)$ and $\tilde b_n:=r_n^{-1}(1)$. Then,
 there exists a partition
$ \tilde a_n=\sigma_0^n<\sigma_1^n<...<\sigma_{M_n}^n=\tilde b_n, $
 with
$\tilde\teta_n(\tilde a_n) =u_1^n$, $\tilde\teta_n(\tilde b_n)
=u_2^n$, such that $\tilde\teta_n|_{(\sigma_i^n,\sigma_{i+1}^n)}\in
\mathrm{C}_{\mathrm{loc}}^{\mathrm{lip}}
((\sigma_i^n,\sigma_{i+1}^n);\Hilbert) $ for all $i=0,\ldots, M_n-1$.
Moreover,
\begin{equation*}
\minpartial{\cE}{t}{\tilde\teta_n(s)} \, \mdt{\tilde \teta}n s \leq
1
 \quad\foraa\ s\in(\tilde a_n,\tilde b_n),
\end{equation*}
and
\begin{equation*}
\int_0^1
 \minpartial{\cE}{t}{\teta_n(r)}\, \mdt{\teta} n r
\dd r=\int_{\tilde a_n}^{\tilde b_n}
 \minpartial{\cE}{t}{\tilde\teta_n(s)}\, \mdt{\tilde\teta} n s
\dd s.
\end{equation*}
We now define for every  $i=0,...,M_n-1$ the sets $
A_n^{i,\delta}:=\left\{s\in(\sigma_i^n,\sigma_{i+1}^n)\,:\,\tilde\teta_n(s)\in\mathrm{int}(K_{\delta})\right\}$,
where $K_{\delta}$ is defined
as in \eqref{set-Kdelta}, and write $A_n^{i,\delta}$ as the countable
union of its connected components, i.e. $
A_n^{i,\delta}=\bigcup_{k=1}^{\infty}(a^{i,\delta}_{n,k},b^{i,\delta}_{n,k}).
$ Similarly, we consider the analogues of the sets $B_n^\delta$
\eqref{def-b-enne-delta}, viz.
\begin{equation}
\begin{gathered}
B_n^{i,\delta}:=\bigcup_{(a^{i,\delta}_{n,k},b^{i,\delta}_{n,k}) \in
\mathfrak{B}_n^{i,\delta}} (a^{i,\delta}_{n,k},b^{i,\delta}_{n,k})
\quad \text{with}
\\
\begin{aligned}
 \mathfrak{B}_n^{i,\delta}=
\bigg\{(a^{i,\delta}_{n,k},b^{i,\delta}_{n,k})\subset
A_n^{i,\delta}\,:\, & \tilde\teta(a^{i,\delta}_{n,k})\in\pa B(x,\delta),\ \tilde\teta(b^{i,\delta}_{n,k})\in\pa B(y,\delta)
 \text{ for $x,\, y \in (\calC(t) {\cap} \subl\rho) $ with $x \neq y$} \bigg\},
\end{aligned}
\end{gathered}
\end{equation}
for  $i=1,..,M_n-1$. We denote by $N(i,n,\delta)$ the cardinality of
the set $\mathfrak{B}_n^{i,\delta}$. Then, we have
\begin{eqnarray*}
C \geq\int_0^1
 \minpartial{\cE}{t}{\teta_n(r)}\, \mdt{\teta} n r
\dd  r
&\geq&
e_{\delta}M_n\sum_{(a^{i,\delta}_{n,k},b^{i,\delta}_{n,k})\in\mathfrak{B}_n^{i,\delta} }\overline m,
\end{eqnarray*}
where
$0<\overline
m:=
\min_{n \in \N}\min_{
\stackrel{x,y\in(\calC(t){\cap} \subl\rho)\cup\{u_1,u_2\}}
{x\neq y}}
(\| x-y\|
-2\overline\delta)$, with $\overline\delta$ as in \eqref{delf-hat-delta} and  $C$ is as in \eqref{int_maggiorato}.
Therefore, we conclude  the estimate
\begin{equation*}
M_nN(i,n,\delta)\leq\frac C{e_{\delta}\overline m}\ \mbox{ for
every}\ 0<\delta\leq\overline\delta,\ \ \ n\in\N.
\end{equation*}
Observing that we may suppose $M_n, \ N(i,n,\delta)\geq 1$, we
conclude a bound for both  $(M_n)_n$  and
$((N(i,n,\delta))_{i=1}^{M_n})_n$.

The proof can be then carried out by suitably adapting the argument
for Proposition \ref{super-lemmone}. \end{proof}
We are now in the position to develop the
\paragraph{\bf Proof of Theorem \ref{prop:cost}.}
\textbf{Ad (1):} Suppose $\cost{t}{u_1}{u_2}=0$.
Then, by definition of $\cost{t}{u_1}{u_2}$,
there exists a sequence
$(\teta_n)_n\subset\admis{u_1}{u_2}{t}$ such that
\begin{equation*}
0=
\lim_{n \to \infty}\int_0^1
\minpartial{\cE}{t}{\teta_n(s)}\,\mdt{\teta} n s \dd s.
\end{equation*}
Then, it follows from
Prop.\ \ref{lemma:extension-lemmone} \emph{(1)}
that $u_1 = u_2$.

\par
\textbf{Ad (2):} consider the nontrivial case
$u_1\neq u_2$
and a curve $\teta\in\admis{u_1}{u_2}{t}$. Define $\zeta : [0,1]\to \Hilbert$ by
$\zeta(s) : = \teta(1-s)$. Then $\zeta \in\admis{u_2}{u_1}{t}$
 and
 \begin{align*}
\cost{t}{u_1}{u_2} \leq  \int_{0}^{1}
\minpartial \calE {t}{\teta(r)} \|\teta'(r)\| \dd r
= \int_{0}^{1}
\minpartial \calE {t}{\zeta(r)} \|\zeta'(r)\| \dd r\,.
\end{align*}
With this argument we easily conclude that  $\cost{t}{u_1}{u_2}\leq\cost{t}{u_2}{u_1}$.
Interchanging the role of $u_1$ and $u_2$ we conclude the
symmetry of the cost. 
\par
\textbf{Ad (3):}
We use the direct method of  the calculus of
variations: let $(\teta_n)_n\subset\admis{u_1}{u_2}{t}$
be a minimizing sequence for $\cost{t}{u_1}{u_2}(<\infty)$.
Applying Proposition \ref{lemma:extension-lemmone} \emph{(2)}
to the curves $\teta_n$
(in fact, we are in the case $u_1\neq u_2$), we conclude.

\par
\textbf{Ad (4):} 
We confine the discussion to the case in which  $\cost t{u_1}{u_3} >0$ and
$\cost t{u_3}{u_2} >0$,  
as the other cases can be treated with simpler arguments. 
Let $\teta_{1,3}$ and $\teta_{3,2}$ be two optimal
curves for $\cost t{u_1}{u_3} $ and
$\cost t{u_3}{u_2} $, respectively.
Set
 \[
 \teta_{1,2}(s):= \begin{cases}
 \teta_{1,3}(2s) & \text{for } s \in \left[ 0,\frac12 \right],\\
 \teta_{3,2}(2s-1) & \text{for } s \in \left(\frac12,1 \right].
 \end{cases}
 \]
Since $u_3\in\mathcal C(t)$, 
it is immediate to check that
$ \teta_{1,2}
\in \admis{u_1}{u_2}{t}$, 
and by the definition  of $\cost t{u_1}{u_2}$ we obtain
\[
\begin{aligned}
\cost t{u_1}{u_2}  \leq \int_0^1 \minpartial \cE t
{\teta_{1,2}(s)} \,\| \teta' (s)\| \dd s &  =   2 \int_0^{1/2}
\minpartial \cE t {\teta_{1,3}(2s)} \,\mdt {\teta}{1,3} {2s} \dd s
\\ &  +  2 \int_{1/2}^1  \minpartial \cE t {\teta_{3,2}(2s-1)}
\,\mdt {\teta}{3,2} {2s-1} \dd s\\ &  = \cost t{u_1}{u_3} +
\cost t{u_3}{u_2},
\end{aligned}
\]
and conclude \eqref{prop:triangle}.
\par
\textbf{Ad (5):}   \eqref{charact-cost_AC} is  a direct consequence of Proposition \ref{super-lemmone}. 
\par
\textbf{Ad (6):} We may suppose 
that $u_1 \neq u_2$ 
(otherwise, $\cost t{u_1}{u_2}=0$ and the desired inequality trivially follows),
and that $\liminf_{k\to\infty} \cost{t}{u_1^k}{u_2^k}<\infty$.
By definition of  $\cost{t}{u_1^k}{u_2^k}$, 
we have that for every $k\geq1$ there exists
$\teta_k\in\admis{u_1^k}{u_2^k}{t}$  such that
\begin{equation*}
 \int_0^1 \minpartial \calE {t}{\teta_k(s)}\|\teta_k'(s)\|\dd s
\,\leq\,
\cost{t}{u_1^k}{u_2^k}+\frac1k.
\end{equation*}
By Prop.\ \ref{lemma:extension-lemmone} (2),
there exists $\teta\in\admis{u_1}{u_2}{t}$ such that
\begin{equation*}
\int_0^1 \minpartial \calE {t}{\teta(s)}\|\teta'(s)\|\dd s
\,\leq\,
\liminf_{k\to\infty}
 \int_0^1 \minpartial \calE {t}{\teta_k(s)}\|\teta_k'(s)\|\dd s 
\,\leq\,
\liminf_{k\to\infty}
\cost{t}{u_1^k}{u_2^k}. 
\end{equation*}
This concludes the proof, in view of the definition of
$\cost{t}{u_1}{u_2}$.
\QED


\section{Proof of the main results}
\label{s:4}
\subsection{Proof of Theorem \ref{mainth:1}}
\label{ss:4.1}
Let $(u_{\eps_n})_n \subset H^1(0,T;\xfin)$ be a sequence of solutions to the Cauchy problem for \eqref{e:sing-perturb}, supplemented with initial data $(u_{\eps_n}^0)_n$ fulfilling \eqref{cvg-initial-data}.

\par\noindent
In the upcoming result we derive from the energy
identity \eqref{eqn_lemma} 
for the sequence
family $(u_{\eps_n})_n$, namely
\begin{equation}
\label{eqn_lemma_u_n}
\int_s^t\left(\frac{\eps_n}2\|u_{\eps_n}'(r)\|^2
+\frac1{2\eps_n}\|\rmD \ene r{u_{\eps_n}(r)} \|^2 \right)\dd r
+\mathcal E_t(u_{\eps_n}(t))=
\mathcal E_s(u_{\eps_n}(s))
+\int_s^t\pt r{u_{\eps_n}(r)}\dd r,
\end{equation}
a series of a priori estimates, 
which will allow us to prove a preliminary
compactness result, Proposition \ref{prop:1compactness} below.

\begin{proposition}[A priori estimates]
\label{u_ep_limit} 
Assume \eqref{Ezero}--\eqref{P_t}.
Then, there exists a constant $C>0$ such that for every
$n\in \N$ the following estimates hold
\begin{align}
& \label{bound-energie}
\sup_{t \in [0,T]} \cg{u_{\eps_n}(t)} + \sup_{t
\in [0,T]} |\pt  t{u_{\eps_n}(t)}| \leq C,
\\
& \label{bound-en-diss}
\int_s^t \left(\frac{{\eps_n}}2 \mdtq u{\eps_n} r
2+\frac1{2\eps_n} \minpartialq {\cE}{r}{u_{\eps_n}(r)}\right) \dd r
  \leq C \quad \text{for all } 0 \leq s\leq t \leq T.
\end{align}
\end{proposition}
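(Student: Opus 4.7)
The plan is to derive all bounds from the energy identity \eqref{eqn_lemma_u_n} together with the power control \eqref{P_t}, the coercivity-based a priori bound \eqref{gronw-conseq}, and a Gronwall argument, in the usual variational-gradient-flow style.

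First, I would take $s=0$ in \eqref{eqn_lemma_u_n} and drop the nonnegative dissipation term on the left-hand side. Using \eqref{P_t} to estimate $|\pt r{u_{\eps_n}(r)}| \leq C_1 \ene r{u_{\eps_n}(r)} + C_2$, this yields, for every $t\in[0,T]$,
\[
\ene t{u_{\eps_n}(t)} \leq \ene 0{u_{\eps_n}^0} + C_2 T + C_1 \int_0^t \ene r{u_{\eps_n}(r)}\dd r.
\]
Since $\calE\in\rmC^1$ and $u_{\eps_n}^0\to u_0$, the initial energies $\ene 0{u_{\eps_n}^0}$ are uniformly bounded. The Gronwall Lemma then gives
\[
\sup_{t\in[0,T]} \ene t{u_{\eps_n}(t)} \leq \bigl(\ene 0{u_{\eps_n}^0} + C_2 T\bigr)\exp(C_1 T) \leq C,
\]
uniformly in $n$. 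Now for each fixed $t$ we have $\inf_{s\in[0,T]}\ene s{u_{\eps_n}(t)}\leq \ene t{u_{\eps_n}(t)}\leq C$, so \eqref{gronw-conseq} applied at the point $u=u_{\eps_n}(t)$ gives $\cg{u_{\eps_n}(t)}\leq C'$ uniformly in $t$ and $n$. Plugging this back into \eqref{P_t} yields $\sup_{t}|\pt t{u_{\eps_n}(t)}|\leq C_1 C' + C_2$, which completes \eqref{bound-energie}.

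For \eqref{bound-en-diss}, I would return to the full energy identity \eqref{eqn_lemma_u_n} and isolate the dissipation term:
\[
\int_s^t\!\!\left(\tfrac{\eps_n}2\mdtq u{\eps_n}r2 + \tfrac1{2\eps_n}\minpartialq \calE r{u_{\eps_n}(r)}\right)\!\dd r
= \ene s{u_{\eps_n}(s)} - \ene t{u_{\eps_n}(t)} + \int_s^t \pt r{u_{\eps_n}(r)}\dd r.
\]
Since $\calE\geq 0$ (we assumed nonnegativity after \eqref{P_t}) and $\ene s{u_{\eps_n}(s)}\leq C$, the first difference is bounded above by $C$. The integral is bounded by $CT$ thanks to the uniform bound on $|\pt r{u_{\eps_n}(r)}|$ just established. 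This gives the uniform bound claimed in \eqref{bound-en-diss}.

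No serious obstacle is expected: the argument is essentially the standard Gronwall-based a priori estimate for gradient flows driven by a smooth, power-controlled energy. The only subtlety worth noting is the order in which the bounds are extracted — the energy bound $\sup_t \ene t{u_{\eps_n}(t)}\leq C$ must be obtained first (from the energy identity plus Gronwall), and only afterwards can one invoke \eqref{gronw-conseq} to upgrade it to the bound on $\cg{u_{\eps_n}(t)}$, which in turn is needed to control $|\pt t{u_{\eps_n}(t)}|$ and hence to close the estimate for the dissipation term.
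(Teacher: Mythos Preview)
Your proof is correct and follows essentially the same approach as the paper's: take $s=0$ in the energy identity, use the power control \eqref{P_t} to get a Gronwall-type inequality, bound the initial energies via \eqref{cvg-initial-data}, apply Gronwall, then upgrade to the bound on $\cg{u_{\eps_n}(t)}$ via \eqref{gronw-conseq} and feed everything back into the energy identity to obtain \eqref{bound-en-diss}. The paper's argument is slightly more compressed (it passes directly from Gronwall to the bound on $\cg{u_{\eps_n}(t)}$), but the logic is identical.
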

\begin{proof}
Combining \eqref{eqn_lemma} with  estimate
\eqref{P_t} for the power function $\Ptname$,
 we find that
\begin{equation}\label{eqn_lemma2}
\begin{aligned}
\int_0^t \left(\frac{\eps_n}2 \mdtq u{\eps_n} s 2+\frac1{2{\eps_n}} \minpartialq
{\cE}{s}{u_{\eps_n}(s)}\right) \dd s +\ene {t}{u_{\eps_n}(t)} \leq \ene  0
{u_{\eps_n}^0}
       +C_1\int_0^t \ene  s {u_{{\eps_n}}(s)} \dd s +C_2T.
\end{aligned}
\end{equation} Now, in view of \eqref{cvg-initial-data} we have $\sup_n \ene  0
{u_{\eps_n}^0}
\leq C$. 
Hence, with  the Gronwall Lemma we conclude that $\sup_{t
\in [0,T]} \cg {u_{\eps_n}(t)}\leq C$, which in turn implies
\eqref{bound-energie}, in view of \eqref{P_t}. Therefore, we also
conclude \eqref{bound-en-diss}.
\end{proof}
\noindent The ensuing compactness result provides what will reveal
to be the \emph{defect measure} $\mu$ (cf.\ Definition
\eqref{def:sols-notions-1}) associated with  the limiting curve $u$ that shall be constructed later on. 
In what follows we will also show that the limiting energy and power functions $\limen$ and $\limp$, cf.\
\eqref{2converg} and \eqref{bis-2converg} below,
  coincide with the energy and power evaluated along $u$.
\begin{prop}
\label{prop:1compactness}
Assume
\eqref{Ezero}--\eqref{P_t}.
Consider the sequence of measures
\begin{equation}
\label{not-mu_n}
\mu_n:= \left(
\frac{\ep_n}2 \mdtq u{\eps_n}{\cdot}2
+\frac1{2\eps_n} \minpartialq \cE {(\cdot)}{u_{\eps_n}(\cdot )}
\right)\, \mathscr{L}^1,
\end{equation}
with $\mathscr{L}^1 $ the Lebesgue measure on $(0,T)$.  Then, there exist
a positive Radon measure $\mu \in \mathrm{M}(0,T)$ and functions $\limen\in
\BV ([0,T])$ and $\limp\in L^\infty (0,T)$ such that, along a not
relabeled subsequence, there hold 
as $n\to\infty$
\begin{align}
& \label{1converg}
\mu_n \weaksto \mu \quad \text{in }\mathrm{M}(0,T),
\\
& \label{2converg}
\lim_{n \to \infty} \ene t {u_{\eps_n}(t)}\,=\,
\limen(t) \quad \text{for all  } t \in [0,T],
\\
& \label{bis-2converg}
\pt t{u_{\eps_n}(t)} \weaksto \limp
\quad\text{in $L^\infty (0,T)$.}
\end{align}
Moreover,
denoting by $\limen_- (t)$ and $\limen_+ (t)$ the left
and right limits of $\limen$ at $ t \in [0,T]$, with the convention that 
$\limen_-(0): = \limen(0)$ and $\limen_+(T) : = \limen(T)$, 
we have that
\begin{align}
&\mu([s,t])+\limen_+(t)=\limen_-(s)
+\int_s^t \limp(r)\dd r
\ \qquad\mbox{for every }\ 
0\leq s \leq t\leq T.
\label{identita-energia-limen}
\end{align}
\par
Furthermore, denoting by
$\mathrm{d} \limen$ the distributional derivative of $\limen$, 
we get from the previous identitities that 
\begin{align}
&\label{left_right_limits}
\limen_- (t)-\limen_+ (t)=\mu(\{t\})
\qquad\mbox{for every }
0 \leq t \leq T; \\
&\label{identity-between-measures} \mathrm{d} \limen + \mu= \limp
\mathscr{L}^1\,.
\end{align}
\par
Finally, let $J$  be the set where the measure $\mu$ is atomic. Then
\begin{equation}
\label{enhanced-prop-3_var} J= \{t\in[0,T]\,:\,\mu (\{t\})>0 \}
\text{ consists of at most countably infinitely many points}.
\end{equation}
\end{prop}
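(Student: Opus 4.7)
The plan is to extract convergent subsequences from the three objects $\mu_n$, $\pt{\cdot}{u_{\eps_n}(\cdot)}$, and $t\mapsto \ene{t}{u_{\eps_n}(t)}$ using the a priori bounds of Proposition \ref{u_ep_limit}, then pass to the limit in the energy identity \eqref{eqn_lemma_u_n} to read off the structural relations \eqref{identita-energia-limen}--\eqref{identity-between-measures}.

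First, the measures $\mu_n$ have total mass bounded by $C$ thanks to \eqref{bound-en-diss} applied with $s=0$, $t=T$; by weak-$*$ compactness of bounded sequences in $\mathrm{M}([0,T])$ one extracts a positive Radon measure $\mu$ realizing \eqref{1converg}. The sequence $\pt{\cdot}{u_{\eps_n}}$ is uniformly bounded in $L^\infty(0,T)$ by \eqref{bound-energie}, hence along a further subsequence it converges weakly-$*$ to some $\limp\in L^\infty(0,T)$, giving \eqref{bis-2converg}. For the energies, rewrite the identity \eqref{eqn_lemma_u_n} as
\begin{equation*}
\ene{t}{u_{\eps_n}(t)} - \ene{s}{u_{\eps_n}(s)} = -\mu_n([s,t]) + \int_s^t \pt{r}{u_{\eps_n}(r)}\,\dd r,
\end{equation*}
which exhibits $t\mapsto \ene{t}{u_{\eps_n}(t)}$ as the sum of a nonincreasing function and a uniformly Lipschitz one. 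Combined with the uniform bound \eqref{bound-energie}, this gives an $n$-independent BV bound, so Helly's selection theorem produces a (not relabeled) subsequence and a function $\limen\in \BV([0,T])$ such that \eqref{2converg} holds \emph{everywhere} on $[0,T]$.

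Next, I would pass to the limit in \eqref{eqn_lemma_u_n} to get \eqref{identita-energia-limen}. The main subtlety is that weak-$*$ convergence of the $\mu_n$ does not immediately yield $\mu_n([s,t])\to \mu([s,t])$ for arbitrary $s,t$. I would therefore first fix a dense set of continuity points $s,t$ common to $\mu$ and to $\limen$ (the excluded points form an at most countable set, since positive Radon measures have countably many atoms and BV functions have countably many jumps), pass to the limit using $\mu_n([s,t])\to \mu([s,t])$ at such points and the weak-$*$ convergence $\pt{\cdot}{u_{\eps_n}}\weaksto \limp$ (tested against $\chi_{[s,t]}$), and obtain
\begin{equation*}
\mu([s,t])+\limen(t)=\limen(s)+\int_s^t \limp(r)\,\dd r
\qquad\text{at common continuity points $s\leq t$.}
\end{equation*}
For arbitrary $0\le s\le t\le T$, I would then approximate by sequences $s_k\uparrow s$, $t_k\downarrow t$ of common continuity points: $\limen(s_k)\to\limen_-(s)$ and $\limen(t_k)\to\limen_+(t)$ by definition of one-sided limits, $\mu([s_k,t_k])\to\mu([s,t])$ by outer continuity of the finite measure $\mu$ (the intervals decrease to $[s,t]$), and the integral term passes by dominated convergence. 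This yields \eqref{identita-energia-limen}.

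Finally, the jump relation \eqref{left_right_limits} is obtained by taking $s=t$ in \eqref{identita-energia-limen}. For \eqref{identity-between-measures}, observe that \eqref{identita-energia-limen} rewrites the distributional relation $\limen_+(t)-\limen_-(s)=\int_{[s,t]}(\limp\,\mathscr L^1-\mu)$ on every interval, so by a standard identification of Stieltjes measures of BV functions one concludes $\dd\limen=\limp\,\mathscr L^1-\mu$. The countability \eqref{enhanced-prop-3_var} is then immediate, since any finite positive Radon measure on $[0,T]$ admits at most countably many atoms. The main technical obstacle is the treatment of the boundary points in the weak-$*$ limit of $\mu_n([s,t])$; everything else is bookkeeping with the BV calculus.
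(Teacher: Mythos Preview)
Your proof is correct and follows essentially the same approach as the paper's: weak-$*$ compactness for $(\mu_n)_n$ and $(\pt{\cdot}{u_{\eps_n}})_n$, Helly for the energies via the nonincreasing structure of $t\mapsto \ene{t}{u_{\eps_n}(t)}-\int_0^t \pt{r}{u_{\eps_n}(r)}\,\dd r$, and an approximation argument to obtain \eqref{identita-energia-limen}. The only cosmetic difference is that the paper sandwiches $\mu([s,t])$ directly via the semicontinuity inequalities $\liminf_n \mu_n(U)\ge \mu(U)$ and $\limsup_n \mu_n(K)\le \mu(K)$ on shrinking intervals $(s-1/m,t+1/m)$ and $[s-1/m,t+1/m]$, whereas you first establish the identity at common continuity points of $\mu$ and $\limen$ and then approximate---both are standard and equivalent.
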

\begin{proof}
It follows from estimate \eqref{bound-en-diss} in Proposition
\ref{u_ep_limit} that the measures $(\mu_n)_n$ have uniformly
bounded variation, therefore \eqref{1converg} follows. As for
\eqref{2converg}, we observe that, by \eqref{eqn_lemma2}, the maps
$t\mapsto \mathscr{F}_n(t):=  \ene t{u_{\eps_n}(t)}-\int_0^t \pt  s
{u_{\eps_n}(s)} \dd s $ are nonincreasing on $[0,T]$. Therefore, by
Helly's Compactness Theorem
there exist $\mathscr{F} \in \BV([0,T])$ such that, up to a
subsequence, $\mathscr{F}_n(t) \to \mathscr{F}(t)$ for all $t \in
[0,T]$. On the other hand, \eqref{bound-energie} also yields
\eqref{bis-2converg}, up to a subsequence. Therefore,
\eqref{2converg} follows with $ \limen(t):= \mathscr{F}(t)+\int_0^t
\limp(s)\dd s.$

To prove identity \eqref{identita-energia-limen}, 
let us first suppose, for simplicity, that $0<s\leq t<T$. We note on the one hand that 
$[s,t]=\bigcap_m(s-1/m,t+1/m)$. Hence,
from the fact that $\mu([s,t])=\lim_{m\to\infty} \mu\big((s-1/m,t+1/m)\big)$
and from convergence \eqref{1converg}, we get
\begin{align}
\mu([s,t])
&=
\lim_{m\to\infty} \mu\big((s-1/m,t+1/m)\big)
\leq
\lim_{m\to\infty}\liminf_{n \to\infty} \mu_n\big((s-1/m,t+1/m)\big)\nonumber\\
&=
\lim_{m \to\infty} \liminf_{n \to\infty} \mu_n\big([s-1/m,t+1/m]\big)\nonumber\\
&=
\lim_{m \to\infty}
\left\{
\limen (s-1/m)-\limen(t+1/m)+\int_{s-1/m}^{t+1/m}
\limp(r)\dd r\right\}
=
\limen_-(s)-\limen_+(t).\label{disug1}
\end{align}
Note that in the last equality we have used the fact 
that the limits $\limen_-(s)$ and $\limen_+(t)$ 
always exist,
since $\limen \in \BV ([0,T])$.
On the other hand, since the identity
$[s,t]=\bigcap_m[s-1/m,+1/m]$
holds as well,
we have at the same time that
\begin{align}
\mu([s,t])
&=
\lim_{m \to\infty} \mu\big([s-1/m,t+1/m]\big)
\geq
\lim_{m \to\infty} \limsup_{n \to\infty} \mu_n\big([s-1/m,t+1/m]\big)\nonumber\\
&=
\lim_{m \to\infty}
\left\{
\limen (s-1/m)-\limen(t+1/m)+\int_{s-1/m}^{t+1/m}
\limp(r)\dd r\right\}
=
\limen_-(s)-\limen_+(t). \label{disug2}
\end{align}
From inequalities \eqref{disug1} and \eqref{disug2}
we obtain \eqref{identita-energia-limen}.
With  obvious modifications we can handle the cases $s=0$ and $t=T$. 
Identity \eqref{identity-between-measures} 
trivially follows from \eqref{identita-energia-limen}.

Finally, let us denote by $(\mathrm{d} \limen)_{\mathrm{jump}}$ the
jump part of the measure $\mathrm{d} \limen$: it follows from
\eqref{identity-between-measures} that
\begin{equation}
\label{interesting-observation}
\mathrm{supp}((\mathrm{d}
\limen)_{\mathrm{jump}}) = J.
\end{equation}
Then, \eqref{enhanced-prop-3_var} follows from recalling that
$\limen \in \BV([0,T])  $ has countably many jump  points.
\end{proof}

\begin{notation}
\label{not-J}\upshape
Hereafter, we will denote by $B$ the set
\begin{equation}
\label{set-B} B= \{t \in (0,T)\,:\,\minpartial \cE
{t}{u_{{\ep}_n}(t) } \to 0 \text{ as } n \to \infty\}
\end{equation}
where $(u_{\eps_n})_n$ is (a suitable subsequence of) 
the sequence for which convergences
\eqref{1converg}--\eqref{bis-2converg} hold.
Due to \eqref{bound-en-diss}, we have that
\[
\lim_{n \to\infty}\int_0^T
\minpartialq\cE{r} {u_{{\ep}_n}(r)} \dd r=0,
\]
hence the set $B$   (defined for a suitable subsequence),
has full Lebesgue measure.
\end{notation}
\par
While the compactness statements in Proposition \ref{prop:1compactness} only relied on assumptions \eqref{Ezero}--\eqref{P_t},
for the next result, which will play a key role in the compactness argument within the proof of Theorem \ref{mainth:1}, 
we additionally need condition \eqref{strong-critical} 
on the critical points of $\calE$. 
\begin{lemma}
\label{lemma:convergence-to-critical}
Assume   \eqref{Ezero}--\eqref{strong-critical}.
For every $t \in [0,T]$ and for all sequences
$(t_1^n)_n$,  $(t_2^n)_n$ fulfilling
$0\leq t_1^n \leq t_2^n \leq T$ for every $n\in\N$
and 
\begin{equation}
\label{hyp-lemma-conv}
t_1^n\to t, \quad t_2^n \to t, \quad
u_{\eps_n}(t_1^n) \to u_1, \quad  u_{\eps_n}(t_2^n) \to u_2 \text{ for some } u_1,\, u_2 \in \xfin,
\end{equation}
there holds
\begin{equation}\label{mu_costo}
\mu(\{t\})\geq \cost t{u_1}{u_2}\,.
\end{equation}
In particular, for every
$t \in [0,T] \setminus J$  we have that $u_1=u_2$.
\end{lemma}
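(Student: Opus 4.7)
The plan is to combine the Young-type pointwise lower bound of the measures $\mu_n$ by the energy-dissipation integrand with Proposition \ref{super-lemmone}, and then pass to the limit using the weak$^*$ convergence $\mu_n \weaksto \mu$.

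First I would observe that, by the Young inequality applied pointwise $\mathscr{L}^1$-a.e. on $(0,T)$, the density of $\mu_n$ in \eqref{not-mu_n} satisfies
\begin{equation*}
\tfrac{\eps_n}{2} \|u_{\eps_n}'(r)\|^2 + \tfrac{1}{2\eps_n} \minpartialq{\cE}{r}{u_{\eps_n}(r)} \geq \minpartial{\cE}{r}{u_{\eps_n}(r)} \|u_{\eps_n}'(r)\|,
\end{equation*}
so that for every Borel set $E \subset (0,T)$ one has $\int_E \minpartial{\cE}{r}{u_{\eps_n}(r)} \|u_{\eps_n}'(r)\| \dd r \leq \mu_n(E)$. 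In particular, for every $\delta>0$ and $n$ large enough (so that $[t_1^n, t_2^n] \subset [t-\delta, t+\delta] \cap [0,T]$),
\begin{equation*}
\int_{t_1^n}^{t_2^n} \minpartial{\cE}{r}{u_{\eps_n}(r)} \|u_{\eps_n}'(r)\| \dd r \leq \mu_n\big([t-\delta, t+\delta]\cap[0,T]\big).
\end{equation*}

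Next I would pass to the $\limsup$ in $n$: by the Portmanteau property for weak$^*$ convergence of positive Radon measures (applied on the closed set), together with convergence \eqref{1converg},
\begin{equation*}
\limsup_{n \to \infty} \mu_n\big([t-\delta, t+\delta]\cap[0,T]\big) \leq \mu\big([t-\delta, t+\delta]\cap[0,T]\big),
\end{equation*}
and then sending $\delta \downarrow 0$ gives $\mu([t-\delta,t+\delta]\cap[0,T]) \downarrow \mu(\{t\})$ by outer regularity. Combining these estimates,
\begin{equation*}
\liminf_{n\to\infty}\int_{t_1^n}^{t_2^n} \minpartial{\cE}{r}{u_{\eps_n}(r)} \|u_{\eps_n}'(r)\| \dd r \leq \mu(\{t\}).
\end{equation*}

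It remains to produce the lower bound by $\cost{t}{u_1}{u_2}$. If $u_1 = u_2$, then by definition $\cost{t}{u_1}{u_2}=0$ and \eqref{mu_costo} is trivial. If $u_1 \neq u_2$, the above bound together with hypothesis \eqref{hyp-lemma-conv} puts us exactly in the setting of Proposition \ref{super-lemmone}(2), applied to the sequence $(u_{\eps_n})_n$ (viewed as absolutely continuous curves on $[t_1^n, t_2^n]$). That proposition yields an admissible curve $\teta \in \admis{u_1}{u_2}{t}$ with
\begin{equation*}
\liminf_{n\to\infty}\int_{t_1^n}^{t_2^n} \minpartial{\cE}{r}{u_{\eps_n}(r)} \|u_{\eps_n}'(r)\| \dd r \geq \int_0^1 \minpartial{\cE}{t}{\teta(s)} \|\teta'(s)\| \dd s \geq \cost{t}{u_1}{u_2},
\end{equation*}
where the last inequality follows from the very definition \eqref{costo-simpler}. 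Chaining the two bounds proves \eqref{mu_costo}.

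The ``in particular'' clause follows at once: for $t \in [0,T] \setminus J$ we have $\mu(\{t\}) = 0$, whence $\cost{t}{u_1}{u_2} = 0$, and Theorem \ref{prop:cost}(1) forces $u_1 = u_2$. The only delicate point, and what I would treat carefully, is the Portmanteau-type step: one must be sure to use a \emph{closed} neighborhood of $t$ (so that the $\limsup$ inequality applies), and to handle the boundary cases $t=0$ and $t=T$ by using the intersected intervals $[0,\delta]$ and $[T-\delta,T]$, which is why the proof passes also when one or both of $t_i^n$ coincides with an endpoint of $[0,T]$.
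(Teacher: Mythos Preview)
Your proof is correct and follows essentially the same approach as the paper's: Young's inequality on the density of $\mu_n$, weak$^*$ upper semicontinuity on closed neighborhoods of $t$, then letting the neighborhood shrink to $\{t\}$. The only cosmetic difference is that the paper packages the lower bound on the energy-dissipation integrals via Theorem~\ref{prop:cost}(5) (inequality~\eqref{charact-cost_AC}), whereas you invoke Proposition~\ref{super-lemmone}(2) directly together with the definition of $\costname t$; since \eqref{charact-cost_AC} is itself obtained from Proposition~\ref{super-lemmone}, the two routes are equivalent.
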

\begin{proof}
Observe that for every $\eta>0$ there holds
\begin{equation}
\label{to-be-cited-later}
\begin{aligned}
\mu ([t-\eta, t+\eta])&\geq\limsup_{n \to \infty}\mu_n ([t_1^n, t_2^n])
\\ &
=
 \limsup_{n \to \infty}  \int_{t_1^n}^{t_2^n} 
 \left( \frac{\ep_n}2\|u_{\ep_n(s )}'\|^2
+\frac1{2\eps_n} \| \rmD \ene {s}{u_{{\ep}_n} (s ) }\|^2 \right)  \dd s
\\ &
\geq
\limsup_{n \to \infty}  \int_{t_1^n}^{t_2^n}  
\|u_{\ep_n(s )}'\| \| \rmD \ene {s}{u_{{\ep}_n} (s ) }\|  \dd s 
\geq \cost{t}{u_1}{u_2},
\end{aligned}
\end{equation}
where the first inequality is due to \eqref{1converg}, the second one to the definition \eqref{not-mu_n} of $\mu_n,$
the third one to the Young inequality, and the last one to
 \eqref{charact-cost_AC} in Proposition \ref{prop:cost}.
Since $\eta>0$ is arbitrary, we conclude \eqref{mu_costo}.
In particular, if  $\mu(\{t\})=0$
then by (1) in Proposition \ref{prop:cost} we  deduce that $u_1=u_2$.
\end{proof}
\par
We are now in the position to perform the \textbf{\underline{proof of Theorem \ref{mainth:1}}:} we will split the arguments in several points.
\par
\noindent
\textbf{Ad \eqref{3converg_var}:}
Let us  consider the set (cf.\ Notation \ref{not-J})
\begin{equation}\label{def-I_var}
I:= J \cup A \cup \{0\}\ \text{with }A\subset(B{\setminus}J)\text{ dense in } [0,T]
\text{ and consisting of countably many points}.
\end{equation}
From 
\eqref{bound-energie}   we gather 
that
\begin{equation}
\label{4-ease-later}
\exists\, C>0 \ \ 
 \forall\, n \in \N, \ \forall\, t \in [0,T]\, : \quad
u_{\ep_n}(t )\in \subl C, \text{ with } \subl C \Subset \xfin \text{ by \eqref{coercivita}.}
\end{equation}
Since $I$ has countably many points,
with a diagonal procedure it is possible to extract from $( u_{{\ep}_n})_n$ a (not relabeled) subsequence
such that there exists $\hat u : I \to X$ with
\begin{equation}\label{eq:convergence-I_var}
u_{\eps_n}(t) \to\hat u(t) \quad\text{for all } t \in I,
\end{equation}
with $\hat{u}(0)=u_0$ thanks to 
the convergence \eqref{cvg-initial-data} of the initial data. 
Moreover, since $A \subset B$ from \eqref{set-B}, we also have 
\begin{equation}\label{critici-1_var}
\hat u(t) \in \mathcal C(t)\qquad\mbox{for every }t \in  A.
\end{equation}
\par
We now extend $\hat u$ to a function defined on the whole interval $[0,T]$,
by showing that 
\begin{equation}\label{limite_unico}
\begin{gathered} \forall\, t\in(0,T]\setminus I \quad
\tilde u(t):=\lim_{k\to\infty}\hat u(t_k)   \text{ is uniquely defined for every $(t_k)_k\in \mathfrak{S}(t)$ and fulfills } \tilde{u}(t) \in \calC(t),
\\
\text{with}\qquad
\mathfrak{S}(t)= 
\left\{(t_k)_k \subset A\, : \  t_k\to t \text{ and } \exists\, \lim_{k\to\infty}\hat u(t_k)\right\}
\end{gathered}
\end{equation}
(in the case $t=T$, the sequence $(t_k)_k$ is to be understood as $t_k\uparrow t$). 
 Observe that $\mathfrak{S}(t) \neq \emptyset$  
since $\hat{u}(I)$ is contained in the compact set $K$ from 
\eqref{4-ease-later},
To  check \eqref{limite_unico}, let $(t_1^k)_k, \ (t_2^k)_k\in \mathfrak{S}(t) $ be such that
\begin{equation*}
\lim_{k\to\infty}\hat u(t_1^k)=:u_1\qquad\mbox{and}\qquad\lim_{k\to\infty}\hat u(t_2^k)=:u_2.
\end{equation*}
We want to show that $u_1=u_2$. Note that $\hat u(t_1^k)=\lim_{n\to\infty}u_{\eps_n}(t_1^k)$
and $\hat u(t_2^k)=\lim_{n\to\infty}u_{\eps_n}(t_2^k)$ for every $k\in\N$, because of \eqref{eq:convergence-I_var}.
Since $t_1^k$, $t_2^k\in A \subset B $ for every $k \in \N$,  there holds
$\rmD\ene{t_1^k}{\hat u(t_1^k)}=\rmD\ene{t_2^k}{\hat u(t_2^k)}=0$ for every $k \in
\N$. Therefore,  we get that  $u_1$, $u_2\in\mathcal C(t)$.
Furthermore,
with a diagonal procedure we can extract a subsequence $(n_k)_k$ such that
\begin{equation*}
u_1=\lim_{k\to\infty}u_{n_k}(t_1^k)\qquad\mbox{and}\qquad u_2=\lim_{k\to\infty}u_{n_k}(t_2^k).
\end{equation*}
Therefore, we are in the position to apply  Lemma \ref{lemma:convergence-to-critical}
to $u_1$ and $u_2 $. Since
$t \notin J$, we have that
$u_1= u_2$. This concludes the proof of \eqref{limite_unico}.
Therefore, we can define  the (candidate) limit function $u$  everywhere on $[0,T]$ by setting 
\begin{equation}\label{defi_u}
u(t):=
\left\{
\begin{array}{ll}
\hat u(t) & \mbox{if }t\in I\\
\tilde u(t) & \mbox{if }t\in(0,T]\setminus I.
\end{array}
\right.
\end{equation}
By construction, $u$ complies with \eqref{to-save-1}. 

We now address the pointwise convergence  \eqref{3converg_var}: in view of \eqref{eq:convergence-I_var},  we have to show it at 
 $t\in (0,T]\setminus I$. We  will prove that at any such point $t$, any subsequence of $(u_{\eps_n}(t))_n$ admits
a further subsequence converging to $u(t)$.
Let us fix a (not relabeled) subsequence $(u_{\eps_n}(t))_n$ and consider a sequence $(t_k)_k\subset A$
such that $t_k\uparrow t$ and $u(t)=\tilde u(t)=\lim_{k\to\infty}\hat u(t_k)$. 
With a diagonal procedure as in the above lines,
we find a subsequence $({\eps}_{n_k})_k$ such that
\begin{equation*}
u(t)=\lim_{k\to\infty}u_{{\eps}_{n_k}}(t_k),
\end{equation*}
whereas, again using that $u_{\eps_{n_k}}([0,T]) \subset \subl C$ for every $k\in \N$, we extract a further (not relabeled) subsequence 
from $(u_{{\eps}_{n_k}}(t))_k$, 
such that 
$$
u_{{\eps}_{n_k}}(t)\to \tilde u \quad \text{for some } \tilde u \in \xfin.
$$
 Since $t\notin J$, 
an application of Lemma \ref{lemma:convergence-to-critical} with $t_{k}$, $t$, $u_{\eps_{n_k}}$,
$u(t)$, and $\tilde u$ in place of $t_1^n$, $t_2^n$, $u_{\ep_n}$, $u_1$, and $u_2$,
respectively, gives that $\tilde u =u(t)$.
Therefore,  
convergence \eqref{3converg_var} holds at $t\in (0,T]\setminus J$,
and at $t\in J$ due to \eqref{eq:convergence-I_var}
and definition \eqref{defi_u}.
\par
\noindent \textbf{Ad \eqref{4converg_var}:}
Since the sequence $(u_{\eps_n})_n $ is bounded in $L^\infty (0,T; \xfin)$ by 
\eqref{4-ease-later}, 
 \eqref{4converg_var} follows from  \eqref{3converg_var}. 
 \par
 It now remains to verify that $u \in \mathrm{B}([0,T];\xfin)$ complies with the properties 
\eqref{to-save-eneq}----\eqref{to-save-complex} 
defining the notion of Dissipative Viscosity solution. 
\par
\noindent \textbf{Ad  \eqref{to-save-eneq}:}
To prove \eqref{to-save-eneq}, we first need to prove that
the left and the right limits of $u$ always exist.
We now show that for every $0\leq t<T$ the right
limit $u_+(t)$ exists. The same argument
can be trivially adapted to prove the existence of the left limit   
$u_-(s)$ for every $s\in (0,T].$ 
Consider $(t_1^k)_k$, $(t_2^k)_k\subset [0,T]$
such that $t_1^k\downarrow t$, $t_2^k\downarrow t$, and the
limits
\begin{equation}
\label{added-label}
\lim_{k\to\infty}u(t_1^k)=:u_1\qquad\qquad\lim_{k\to\infty}u(t_2^k)=:u_2
\end{equation}
exist.
Note that, up to subsequences, we have that either
$t_1^k\leq t_2^k$ or $t_2^k\leq t_1^k$  for every $k\in\N$.
Suppose for simplicity that we are in the first case.
Observe that from \eqref{Ezero} and from \eqref{2converg},
we have that $\ene t{u(t)}= \limen(t)$
for every $t\in[0,T]$, due to 
convergence \eqref{3converg_var}.
In particular, since $\limen\in\BV([0,T])$, 
there exist
\begin{equation}
\label{lim_da_dx}
\lim_{k\to\infty} \ene {t_1^k}{u(t_1^k)}
\,=\, 
\lim_{k\to\infty} \ene {t_2^k}{u(t_2^k)} = \limen_+(t).
\end{equation}

Now, \eqref{2converg}
gives that
  $\ene {t_i^k}{u(t_i^k)}=\lim_{n\to\infty}\ene{t_i^k}{u_{\eps_n}(t_i^k)}$ for every
$k \in \N$ and for $i=1,2$. Hence, there exists $({\eps_{n_k}})_k$ such that
\begin{equation*}
\left|\ene{t_1^k}{u_{\eps_{n_k}}(t_1^k)}-  \ene {t_1^k}{u(t_1^k)} \right|\leq\frac1 k,\qquad
\left| \ene{t_2^k}{u_{\eps_{n_k}}(t_2^k)}-  \ene {t_2^k}{u(t_2^k)}  \right|\leq\frac1 k
\end{equation*}
for every $k\in\N$, so that
\begin{equation}\label{limiti_uguali}
\lim_{k\to\infty} \ene{t_1^k}{u_{\eps_{n_k}}(t_1^k)}=\lim_{k\to\infty} \ene {t_1^k}{u(t_1^k)} ,\qquad
\lim_{k\to\infty} \ene{t_2^k}{u_{\eps_{n_k}}(t_2^k)}=\lim_{k\to\infty} \ene {t_2^k}{u(t_2^k)} \,.
\end{equation}
Arguing as previously done, we can also suppose that, up to a subsequence,
\begin{equation}
\label{u1u2-limiti}
u_1=\lim_{k\to\infty}u_{\eps_{n_k}}(t_1^k),
\qquad\qquad 
u_2=\lim_{k\to\infty}u_{\eps_{n_k}}(t_2^k).
\end{equation}
Now, recalling definition \eqref{not-mu_n} of $\mu_n$, the energy identity \eqref{eqn_lemma} with
$t_1^k$, $t_2^k$, $u_{\eps_{n_k}}$ in place
of $s$, $t$, and $u_{\ep}$, respectively, gives
\begin{equation*}
\mathcal E_{t_1^k}(u_{\eps_{n_k}}(t_1^k))-\mathcal E_{t_2^k}(u_{\eps_{n_k}}(t_2^k))
+\int_{t_1^k}^{t_2^k} \pt r{u_{\eps_{n_k}}(r)}\dd r=\mu_{{n_k}}([t_1^k,t_2^k]).
\end{equation*}
This equality, together with \eqref{limiti_uguali},  \eqref{u1u2-limiti},
and with \eqref{charact-cost_AC} in 
Theorem \ref{prop:cost}, 
implies that
\begin{equation}\label{disug-limen-costo_1}
0=
\lim_{k\to\infty} \ene {t_1^k}{u(t_1^k)} -\lim_{k\to\infty} \ene {t_2^k}{u(t_2^k)} 
\geq\liminf_{k\to\infty}\mu_{\eps_{n_k}}([t_1^k,t_2^k])\geq \cost t{u_1}{u_2}
\end{equation}
(note that we have also used \eqref{bis-2converg}
and \eqref{lim_da_dx}).
Hence, we have obtained that $\cost t{u_1}{u_2}=0$
and in turn that $u_1=u_2$, in view of Proposition \ref{prop:cost} (1), 
whence we conclude 
that the right limit
$u_+(t)$ exists.

Combining  
\eqref{3converg_var} with \eqref{2converg}, 
and taking into account that $\calE \in \rmC^1([0,T]\times \xfin)$,  
we gather that
 \[
 \mathscr{E}_+(t) = \ene t{u_+(t)}\quad\text{for all }
 0\leq t<T,
 \qquad
 \mathscr{E}_-(s) = \ene s{u_-(s)}\quad\text{for all }
 0< s\leq T, 
 \]
 and
\[
\pt t{u_{\eps_n}(t)} \to \pt t{u(t)} \qquad \text{for all } t \in [0,T].
 \]
In view of  \eqref{bound-energie} and the Lebesgue theorem, 
we then have $\pt t{u_{\eps_n}(t)} \to \pt t{u(t)} $ in $L^p(0,T)$ for every $1\leq p<\infty$. Therefore, 
 \[
 \mathscr{P}(t) = \pt t{u(t)} \quad \foraa\, t \in (0,T),
 \]
and  the energy balance \eqref{to-save-eneq} follows from \eqref{identita-energia-limen}.

\par
\noindent \textbf{Ad \eqref{to-save-2}:}
To prove that  $u_+(t) \in \calC(t)$ 
for every $t \in [0,T)$ 
(the argument for $u_-(t)$, with $t\in (0,T]$,  is perfectly analogous),
it is sufficient to observe that there always
exists $t^k\downarrow t$ such that $(t^k)_k\subset(0,T]\setminus J$,
so that in particular
\[
u_+(t)=\lim_{k\to\infty} u(t^k),
\]
and $u(t^k)\in\calC(t^k)$ for every $k\in\N$.
Therefore, by this limit and by \eqref{Ezero}, 
$u_+(t)\in\calC(t)$.  

\noindent 
\textbf{Ad \eqref{to-save-3}\&\eqref{to-save-4}:} 
preliminarily, we show that
\begin{equation}\label{disug-limen-costo_2}
\ene {t}{u_-(t)}- \ene {t}{u_+(t)} \geq\cost t{u_-(t)}{u_+(t)} \quad \text{for every } t \in (0,T)
\end{equation}
(suitable analogues hold at the points $t=0$ and $t=T$). Indeed, 
fix
$t_1^k\uparrow t$ and $t_2^k\downarrow t$,
so that  (cf.\ \eqref{added-label}) $u_1= \lim_{k\to\infty} u(t_1^k)=u_-(t)$ and  $u_2= \lim_{k\to\infty} u(t_2^k)=u_+(t)$. 
The very same arguments leading to  \eqref{disug-limen-costo_1} show that 
\[
\lim_{k\to\infty} \ene {t_1^k}{u(t_1^k)} -\lim_{k\to\infty} \ene {t_2^k}{u(t_2^k)} 
\geq\liminf_{k\to\infty}\mu_{\eps_{n_k}}([t_1^k,t_2^k])\geq \cost t{u_1}{u_2}\,.
\]
Then, \eqref{disug-limen-costo_2} ensues. 
On account of identity (\ref{left_right_limits}), we deduce 
\begin{equation}
\label{prelim-added}
\mu(\{t\})\geq \cost t{u_-(t)}{u_+(t)} \quad \text{for every } t \in [0,T]\,.
\end{equation}
In particular, if $t\notin J$, 
 we have
  $\cost t{u_-(t)}{u_+(t)}=0$, hence
$u_-(t)=u_+(t)$. Thus, we have proved the one-sided inclusion  $\supset$  in 
\eqref{to-save-3}.
%
\par
Let us now prove the converse of  inequality \eqref{disug-limen-costo_2},
namely
\[
\ene {t}{u_-(t)}- \ene {t}{u_+(t)} \leq\cost t{u_-(t)}{u_+(t)}.
\]
We may confine the discussion to the case $t\in J$ for, otherwise, 
we have  $u_-(t)=u_+(t)$ and the above inequality trivially holds.  
Let $\teta\in\admis{u_-(t)}{u_+(t)}{t}$
be a minimizing curve for the cost $\cost t{u_-(t)}{u_+(t)}$:
its existence is guaranteed by Theorem \ref{prop:cost} (3). 
Then, by the chain rule 
\begin{equation}
\label{converse}
\begin{aligned}
\cost t{u_-(t)}{u_+(t)}&=\int_{0}^1 \|\rmD\mathcal
E_t(\teta(s))\| \|\teta'(s)\| \dd s  \\
&\geq- \left(\ene t {\teta(1)} -\ene t {\teta(0)}\right)= \ene {t}{u_-(t)} - \ene t{u_+(t)}.
\end{aligned}
\end{equation}
All in all, again taking into account \eqref{left_right_limits},
we have proved that 
\[
\cost t{u_-(t)}{u_+(t)} = \mu(\{t\})=  \ene {t}{u_-(t)} - \ene t{u_+(t)} \quad \text{for all } t \in [0,T],
\]
whence \eqref{to-save-3}\&\eqref{to-save-4} also in view of Thm.\
 \ref{prop:cost}(1). 
\par
 This concludes the proof of Theorem \ref{mainth:1}.
\QED



\subsection{Proof of Theorem \ref{mainth:2}}
\label{ss:4.2}
Let us denote by $\mu_{\mathrm{AC}}$, $\mu_{\mathrm{J}}$, and $\mu_{\mathrm{CA}}$, the absolutely continuous, jump, and Cantor parts of the defect measure $\mu$. Recall that 
$
\mu_{\mathrm{J}}([s,t]) = \sum_{r\in J \cap [s,t]} \cost r{u_-(r)}{u_+(r)}
$
in view of \eqref{to-save-4}.   
It follows from \eqref{to-save-eneq} that, 
for every $0\leq t\leq T$, 
\[
\mu_{\mathrm{J}}([0,t])  +\ene t{u_+(t)}  
\leq 
\mu_{\mathrm{AC}}([0,t]) +\mu_{\mathrm{J}}([0,t]) +\mu_{\mathrm{CA}}([0,t]) 
+\ene t{u_+(t)}  
=
\ene 0{u(0)} + \int_0^t \pt r{u(r)} \dd r \,.
\]
We will now show that 
\begin{equation}
\label{reverse-eneq}
\mu_{\mathrm{J}}([0,t])  +\ene t{u_+(t)} 
\geq 
\ene 0{u(0)} + \int_0^t \pt r{u(r)} \dd r \quad 
\text{ for every }\ 0\leq t \leq T, 
\end{equation}
and therefore conclude that $\mu_{\mathrm{AC}}([0,t])= \mu_{\mathrm{CA}}([0,t])=0$ at every $t\in[0,T]$, whence the thesis.
\par
We will deduce \eqref{reverse-eneq} by applying the following result, which is a variant of \cite[Lemma 6.2]{Savare-Minotti}.
\begin{lemma}
\label{l:ftom_VE}
Let $g :[a,b]
\to \R$  be a strictly increasing function, and $f:[a,b]\to\R$ be right continuous and such that its restriction  to the set $[a,b]\setminus J_g $ is lower semicontinuous.
Suppose that 
\begin{align}
&
\label{liminf-cond}
\liminf_{r\uparrow t} f(r) - f(t) \geq g_-(t) - g_+(t) && \quad \text{for all } t \in J_g,
\\
& 
\label{limsup-cond}
\limsup_{s\downarrow t} \frac{f(t)-f(s)}{g_+(s) - g_+(t)} \geq -1 && \quad \text{for all } t \in [a,b]\,.
\end{align}
Then, the map $f - g$ is non-increasing on $[a,b]$.
\end{lemma}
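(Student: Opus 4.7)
The plan is to proceed by contradiction via a monotone-perturbation (sliding) argument. Fix $s_0 < t_0$ in $[a,b]$ and suppose toward a contradiction that $(f-g)(t_0) > (f-g)(s_0)$. Using the strict monotonicity of $g$, I pick $\ep>0$ so small that, setting
\[
\psi(t) := f(t) - (1+\ep) g(t) - \bigl( f(s_0) - (1+\ep) g(s_0)\bigr),
\]
one still has $\psi(t_0) > 0$, while $\psi(s_0) = 0$. Define
\[
t^* := \sup\{\, t \in [s_0, t_0] : \psi(t) \le 0\,\}.
\]
A contradiction at $t^*$, together with letting $\ep\downarrow 0$, will yield $(f-g)(t_0) \le (f-g)(s_0)$, which is the desired monotonicity of $f-g$.

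To derive the contradiction I would first establish $\psi(t^*)\le 0$ by splitting into two cases. If $t^*\notin J_g$, then $g$ is continuous at $t^*$ and, exploiting the countability of $J_g$, one can pick $\tau_n\uparrow t^*$ with $\tau_n \in\{\psi\le 0\}\setminus J_g$; the restricted lower semicontinuity of $f$ then yields $\psi(t^*)\le \liminf_n \psi(\tau_n)\le 0$. If instead $t^*\in J_g$, then (adopting the convention $g(t^*)=g_+(t^*)$, a harmless modification of $g$ at countably many jumps) for any $\tau_n\uparrow t^*$ with $\psi(\tau_n)\le 0$ hypothesis \eqref{liminf-cond} gives
\[
0 \,\ge\, \liminf_{n\to\infty}\psi(\tau_n) \,\ge\, \bigl[f(t^*)+g_-(t^*)-g_+(t^*)\bigr] - (1+\ep)g_-(t^*) - c \,=\, \psi(t^*) + \ep\bigl(g_+(t^*)-g_-(t^*)\bigr),
\]
where $c := f(s_0)-(1+\ep)g(s_0)$; hence $\psi(t^*) \le -\ep(g_+(t^*)-g_-(t^*))<0$.

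Once $\psi(t^*)\le 0$ is in hand, the second step applies condition \eqref{limsup-cond} at $t^*$: it furnishes a sequence $s_n\downarrow t^*$ with
\[
f(s_n) - f(t^*) \,\le\, \bigl(1+\tfrac{\ep}{2}\bigr)\bigl(g_+(s_n) - g_+(t^*)\bigr).
\]
Since $J_g$ is countable I may take $s_n\notin J_g$, so that $g(s_n)=g_+(s_n)$; a short computation then yields
\[
\psi(s_n) - \psi(t^*) \,\le\, -\tfrac{\ep}{2}\bigl(g_+(s_n)-g_+(t^*)\bigr) \,<\, 0,
\]
forcing $\psi(s_n)<0$ for $s_n>t^*$ arbitrarily close to $t^*$, which contradicts the definition of $t^*$ unless $t^*=t_0$---an eventuality excluded by $\psi(t_0)>0$. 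I expect the technically delicate step to be the jump case $t^*\in J_g$: the hypothesis \eqref{liminf-cond} is tailor-made to offset the downward jump of $g$ at $t^*$ by a compensating left drop of $f$, and the gain $\ep(g_+(t^*)-g_-(t^*))>0$ it produces is precisely what upgrades $\psi(t^*)\le 0$ to a strict inequality; tracking carefully whether $g$ is evaluated at its left limit, right limit, or point value will be the main bookkeeping subtlety.
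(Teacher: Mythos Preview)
Your approach is genuinely different from the paper's. The paper does not argue directly at all: it simply observes that the lemma is the mirror image of \cite[Lemma~6.2]{Savare-Minotti} and obtains it by the reflection $t\mapsto a+b-t$, applying that cited result to $-f^{\#}$ and $-g^{\#}$ with $f^{\#}(t):=f(a{+}b{-}t)$, $g^{\#}(t):=g(a{+}b{-}t)$. Your sliding/supremum argument is, by contrast, a self-contained proof---essentially the kind of argument that underlies the cited lemma itself. The gain of your route is that the paper would become independent of \cite{Savare-Minotti} at this point; the paper's route is of course much shorter. Two caveats on your execution: (i) the ``harmless modification'' $g\leadsto g_+$ is not entirely innocent in general, since the conclusion concerns $f-g$ rather than $f-g_+$, and these need not coincide at jump points (in the paper's application $g$ is right-continuous by construction, so the issue disappears there); (ii) in the case $t^*\notin J_g$, arranging $\tau_n\in\{\psi\le 0\}\setminus J_g$ is not automatic from the countability of $J_g$ alone---you should use the right-continuity of $f$ at each $\tau_n$ (together with $g$ increasing) to slide to nearby non-jump points while keeping $\psi(\tau_n')\le \delta$, and then invoke the restricted lower semicontinuity. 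Both are precisely the bookkeeping issues you already anticipate, and neither is a structural gap.
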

\noindent In fact, \cite[Lemma 6.2]{Savare-Minotti} has the same thesis as the result above, but  `specular' conditions on $f$ and $g$,
involving left continuity of $f$,    the $\limsup$ from the right, 
in place of the $\liminf$ from the left, in \eqref{liminf-cond}, 
and analogously for \eqref{limsup-cond}, etc.
For our purposes, though, it is more convenient to apply the present version of the result. Its proof can be deduced from that of  \cite[Lemma 6.2]{Savare-Minotti} by observing that, for $f - g$ to be non-increasing on $[a,b]$, it is sufficient to have 
\[
f(b) -g(b) \leq f(a) - g(a)  \ \Leftrightarrow \  f^{\#}(a) - g^{\#}(a) \leq  f^{\#}(b) - g^{\#}(b) \text{ with } f^{\#}(t): = f(b+a-t), \, g^{\#}(t): = g(b+a-t)\,.
\]
Therefore, we  are led to prove  that the function $f^{\#}-g^{\#}$ is non-decreasing, which follows from applying  \cite[Lemma 6.2]{Savare-Minotti} to the functions $-f^{\#}$ and $-g^{\#}$. Rewriting the conditions on  $-f^{\#}$ and $-g^{\#}$ from   \cite[Lemma 6.2]{Savare-Minotti}  in terms of the original functions $f$ and $g$, one obtains the statement of Lemma \ref{l:ftom_VE}.
\par
We are now in the position to conclude the \textbf{\underline{proof of Theorem \ref{mainth:2}}}. Mimicking the argument from the proof of \cite[Thm.\ 6.5]{Savare-Minotti}, in order to conclude the lower energy estimate 
 \eqref{reverse-eneq} we shall apply Lemma \ref{l:ftom_VE} to the functions $f,\, g: [0,T]\to \R$ defined by 
\begin{equation}
\label{f&g}
f(t): = \int_0^t \pt r{u(r)} \dd r - \ene t{u_+(t)}  \quad\text{ and } \quad g(t) := \sum_{r\in J \cap [0,t]} \cost r{u_-(r)}{u_+(r)} + \eta t 
\end{equation}
with $\eta>0$ arbitrary, so that $g$ is strictly increasing. By construction $f$ is right continuous. Since $u$ is continuous on $[0,T]\setminus J_g$, and since $\calE \in \rmC^1([0,T]\times X)$, we deduce that $f$ is even continuous on $[0,T]\setminus J_g$.
To check \eqref{liminf-cond}, we observe that 
\[
\begin{aligned}
\liminf_{r\uparrow t} f(r) - f(t)  &  = \lim_{r\uparrow t } \int_t^r \pt{\tau}{u(\tau)} \dd \tau + \liminf_{r\uparrow t}  \ene t{u_+(t)}  - \ene r{u_+(r)}  
\\ & 
= \ene t{u_+(t)} - \ene t{u_-(t)} = - \cost{t}{u_-(t)}{u_+(t)} 
=  g_-(t) - g_+(t) \,.
\end{aligned}
\]
Note that in the last equality we have used the 
fact that
\[
g_+(t) - g_-(t)
=
\lim_{\tau\downarrow t}
\lim_{s\uparrow t}
\sum_{r\in J \cap [s,\tau]} \cost r{u_-(r)}{u_+(r)}
 = 
\cost t{u_-(t)}{u_+(t)}.
\]
Finally, in order to verify \eqref{limsup-cond}, we 
preliminarily
calculate 
\[
\begin{aligned}
f(t) -f(s)  & = \int_s^t \pt r{u(r)} \dd r+ \ene s{u_+(s)}  - \ene t{u_+(t) } \\ & = \int_s^t \left( \pt r{u(r)} {-} \pt r{u_+(s)} \right) \dd r+   \ene t{u_+(s)} - \ene t{u_+(t)} \doteq I_1+I_2\,.
\end{aligned}
\]
Observing  that 
\begin{equation}
\label{to-be-used}
g_+(s) - g_+(t) \geq \eta(s-t),
\end{equation}
we find that 
\[
\left| \frac{I_1}{g_+(s) - g_+(t)} \right| = \frac{|I_1|} {g_+(s) - g_+(t)} \leq \frac1\eta \sup_{r\in [s,t]} |  \pt r{u(r)} {-} \pt r{u_+(s)} | 
\to 0 \quad  \text{ as } s\downarrow t\,,
\]
due to the continuity of the map $(t,u) \mapsto \pt tu$. 
Therefore,
\[
\begin{aligned}
\limsup_{s\downarrow t} \frac{f(t)-f(s)}{g_+(s) - g_+(t)} = \limsup_{s\downarrow t} \frac{ \ene t{u_+(s)} - \ene t{u_+(t)} }{g_+(s) - g_+(t)}  
=  \limsup_{s\downarrow t} \frac{\ene t{u_+(s)} - \ene t{u_+(t)}} {\| \rmD \ene t{u_+(s)} \|} \frac{\| \rmD \ene t{u_+(s)} \|}{g_+(s) - g_+(t)} \geq 0\,,
\end{aligned}
\]
which follows from condition \eqref{Loja}, and from the fact that $\frac{\| \rmD \ene t{u_+(s)} \|}{g_+(s) - g_+(t)} \geq 0$ for all $s\geq t$ since $g_+$ is strictly increasing.
\par
All in all, 
\eqref{reverse-eneq}  ensues from writing $f(t) - g(t) \leq f(0)- g(0)$ with $f$ and $g$ from \eqref{f&g}, and letting $\eta \downarrow 0$. 
\QED


\section{Examples and applications}
\label{s:5}

In this section, we discuss two classes of conditions which
guarantee the validity for $\ene t{\cdot}$, $t\in [0,T]$,
of hypothesis \eqref{strong-critical}
on the set of its critical points,
and of the {\L}ojasiewicz inequality
\eqref{true-Loja}, respectively.
\par
We start by introducing the 
\emph{transversality conditions},
concerning the properties of the energy $\cE$ at 
points $(t,u)$ where $u$ is a \emph{degenerate} critical point, 
i.e. on the set
\begin{equation}
\label{singular-points}
\mathcal{S} := \big\{ (t,u) \in [0,T]\times\Hilbert:\ u\in\calC(t)\
\mbox{ and }\ \gder 2{\cE} tu 
\ \text{is non-invertible}\big\}
\end{equation}

\begin{definition}
\label{def:transv}
We say that the functional $\cE$ satisfies the 
transversality conditions if each point
$(t_0,u_0) \in \mathcal{S}$ fulfills 
\begin{enumerate}[\rm (T1)]
\item
  $\mathrm{dim}(\NN (\gder 2\cE {t_0}{u_0}))=1$;
\item
  If $0\neq v\in \NN(\gder 2\cE {t_0}{u_0})$
  then $\la\partial_t
  \gder{}{\cE}{t_0}{u_0}, v\ra  
  \neq 0$;
\item
  If $0\neq v\in \NN(\gder 2\cE {t_0}{u_0})$ then
  $
  \gder 3\cE {t_0}{u_0}[v,v,v]
    \neq 0$,
\end{enumerate}
where $\NN(\gder 2\cE {t_0}{u_0})$ denotes the kernel of the mapping 
$\gder 2\cE {t_0}{u_0}.$
\end{definition}

Under the transversality conditions we have the following result,
proved in \cite[Corollary 3.6]{genericity}, ensuring that 
the critical set $\calC(t)$ is discrete at every $t\in [0,T]$.

\begin{proposition}[\cite{genericity}]
\label{prop:7.3}
Let $\cE\in \rmC^3([0,T]\times\xfin)$  
comply with the transversality conditions.
Then, for every $t\in [0,T]$, 
the set $\calC(t)$ consists
of isolated points.
Hence, \eqref{strong-critical} holds.
\end{proposition}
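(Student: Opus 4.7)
The goal is, for a fixed $t_0\in [0,T]$, to show that every $u_0\in\calC(t_0)$ is isolated in $\calC(t_0)$. The plan is to split into the non-degenerate and degenerate cases and, in the second case, perform a Lyapunov--Schmidt reduction that exploits (T1) and (T3). Note that (T2) plays no role here: it is the condition that governs transitions of critical points in $t$, whereas for discreteness at a fixed time only (T1) and (T3) are needed.

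If $(t_0,u_0)\notin\calS$, the bilinear form $\rmD^2\calE_{t_0}(u_0)$ is invertible, so the implicit function theorem applied to the $\rmC^2$ map $u\mapsto \rmD\calE_{t_0}(u)$ immediately yields a neighborhood of $u_0$ containing no other critical point of $\calE_{t_0}(\cdot)$.

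Assume then $(t_0,u_0)\in\calS$. By (T1) we have $\NN:=\NN(\rmD^2\calE_{t_0}(u_0))=\R v$ with $\|v\|=1$, and since $\rmD^2\calE_{t_0}(u_0)$ is self-adjoint on the finite-dimensional Hilbert space $\xfin$, we get the orthogonal decomposition $\xfin=\R v\oplus \NN^\perp$, with $\rmD^2\calE_{t_0}(u_0)$ inducing an isomorphism of $\NN^\perp$ onto itself. Writing $u=u_0+s v+w$ with $s\in\R$ and $w\in\NN^\perp$, and letting $P$ be the orthogonal projection onto $\R v$, the equation $\rmD\calE_{t_0}(u)=0$ splits into
\begin{equation*}
(I-P)\rmD\calE_{t_0}(u_0+sv+w)=0,\qquad \varphi(s,w):=\langle \rmD\calE_{t_0}(u_0+sv+w),v\rangle=0.
\end{equation*}
The partial derivative of the first equation with respect to $w$ at $(0,0)$ equals $(I-P)\rmD^2\calE_{t_0}(u_0)|_{\NN^\perp}$, which is an isomorphism of $\NN^\perp$; thus the IFT gives a $\rmC^2$ solution $w=w(s)$ defined for $|s|$ small, with $w(0)=0$. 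Differentiating once and projecting shows $\rmD^2\calE_{t_0}(u_0)w'(0)\in\NN$, and since $w'(0)\in\NN^\perp$, this forces $w'(0)=0$.

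It then remains to analyze the scalar bifurcation function $\psi(s):=\varphi(s,w(s))$. Direct differentiation gives $\psi(0)=0$, and $\psi'(0)=\langle \rmD^2\calE_{t_0}(u_0)v,v\rangle=0$ since $v\in\NN$. For the second derivative one computes
\begin{equation*}
\psi''(0)=\rmD^3\calE_{t_0}(u_0)[v,v,v]+\langle \rmD^2\calE_{t_0}(u_0)w''(0),v\rangle,
\end{equation*}
and the second summand vanishes because $\rmD^2\calE_{t_0}(u_0)$ is self-adjoint and kills $v$. By (T3), $\psi''(0)\neq 0$, hence $\psi(s)=\tfrac12\psi''(0)s^2+o(s^2)$, which is nonzero for $0<|s|$ sufficiently small. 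Consequently, the only critical point of $\calE_{t_0}$ in a neighborhood of $u_0$ is $u_0$ itself, proving that $u_0$ is isolated in $\calC(t_0)$. The only delicate step is checking $w'(0)=0$ and the vanishing of the $w''(0)$-contribution to $\psi''(0)$: both follow cleanly from the orthogonal decomposition and self-adjointness of $\rmD^2\calE_{t_0}(u_0)$, so no serious obstacle is expected.
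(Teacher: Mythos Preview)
Your proof is correct. The paper itself does not prove this proposition: it is stated with a citation to \cite[Corollary~3.6]{genericity}, so there is no in-paper argument to compare against. Your Lyapunov--Schmidt reduction, using (T1) to get a one-dimensional kernel and (T3) to make the reduced bifurcation function $\psi$ satisfy $\psi''(0)\neq 0$, is the standard route to this conclusion and is presumably close to what appears in \cite{genericity}. Your observation that (T2) is irrelevant for discreteness at a \emph{fixed} time is also correct; (T2) governs how branches of critical points evolve in $t$. The regularity bookkeeping is consistent: $\calE\in\rmC^3$ makes $\rmD\calE_{t_0}$ of class $\rmC^2$, which is exactly what is needed to compute $\psi''(0)$ and to justify the implicit function step for $w(s)$.
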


With the following \emph{genericity} result, proved in 
\cite[Theorem 1.3. and Corollary 3.7]{genericity}, 
we see that suitable second-order perturbations of an arbitrary
energy functional lead to an energy fulfilling the transversality conditions.
In order to state it, we need to introduce 
the set ${\rm Sym}(\xfin)$ of the symmetric
bilinear forms on $\xfin\times\xfin$.
Moreover, for a further technical reason that we do not detail here, 
in the following theorem we have to require $\calE \in \rmC^4 ([0,T]\times\xfin)$.

\begin{theorem}[\cite{genericity}]
\label{thm:strong-perturb}
Let $\calE$ be in $\rmC^4 ([0,T]\times\xfin)$.
Then, every open neighborhood $U$ of the origin in 
$\xfin\times{\rm Sym}(\xfin)$ contains a set
$U_r$ of full Lebesgue measure such that, 
for every $(y, \mathscr{K})\in U_r$, the functionals
\begin{equation}
\label{lo-perturbed-2}
(t,u)
\,\longmapsto\,
\ene tu +  \pairing{}{}{y}{u}   + \frac12 \mathscr{K}(u,u)
\end{equation}
satisfy the transversality conditions.
\end{theorem}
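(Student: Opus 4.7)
The plan is to invoke a parametric transversality / Sard-type argument, exploiting the two-parameter family $(y, \mathscr{K}) \in \xfin \times {\rm Sym}(\xfin)$ as perturbation parameters sufficient to make each of the conditions (T1), (T2), (T3) a generic property. First, I would introduce the global map
\[
F : [0,T] \times \xfin \times \xfin \times {\rm Sym}(\xfin) \to \xfin, \qquad F(t, u, y, \mathscr{K}) := \rmD \ene tu + y + \mathscr{K}u,
\]
which is the differential with respect to $u$ of the perturbed energy $\tilde{\calE}_t(u) := \ene tu + \langle y, u\rangle + \frac{1}{2}\mathscr{K}(u,u)$. Since $\partial_y F$ is the identity on $\xfin$, $F$ is a submersion, so $Z := F^{-1}(0)$ is a smooth submanifold. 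The fibre of the natural projection $\pi : Z \to \xfin \times {\rm Sym}(\xfin)$ over $(y, \mathscr{K})$ is precisely the graph of the critical points of $\tilde{\calE}$, and a first application of Sard's theorem to $\pi$ shows that for parameters outside a null set this fibre is a smooth $1$-manifold in $[0,T]\times\xfin$.

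Next, I would handle (T1) by examining the stratum inside $Z$ where the Hessian $H(t,u,y,\mathscr{K}) := \rmD^2 \ene tu + \mathscr{K}$ has kernel of dimension at least two. Within ${\rm Sym}(\xfin)$, the set of forms of corank $\geq 2$ has codimension at least $3$; because $\mathscr{K}$ enters $H$ additively, the induced map $Z \to {\rm Sym}(\xfin)$ is a submersion in $\mathscr{K}$, so the parametric transversality theorem combined with Sard's theorem shows that only a Lebesgue-null set of $(y,\mathscr{K})$ admits points where the corank exceeds $1$. Having restricted to the corank-$1$ stratum, I would smoothly (and locally) select a unit kernel vector $v$ of $H$; then (T2) amounts to the zero set of the scalar function $(t,u,y,\mathscr{K}) \mapsto \langle \partial_t \rmD \ene tu, v\rangle$ meeting $\pi^{-1}(y,\mathscr{K})$ transversely, and (T3) to the analogous transversality of the cubic $\rmD^3 \ene tu[v,v,v]$. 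In both cases the perturbation parameters $y$ and $\mathscr{K}$ provide enough freedom to move the critical-point equation off the bad locus, and Sard delivers a full-measure set of good parameters for each. The desired $U_r$ is then the intersection of three sets of full measure inside $U$.

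The main obstacle is the verification of (T3), because the third-order derivative of $\calE$ is not touched by the additive and quadratic perturbations. One must argue that, after freezing (T1) and (T2) on suitable smooth submanifolds, moving $(y, \mathscr{K})$ still suffices to cut the zero set of the cubic form transversely — the key point being that $v$ depends in a $\rmC^1$ fashion on $(t,u,y,\mathscr{K})$, so that the composite scalar $\rmD^3\tilde{\calE}_t(u)[v,v,v]$ is differentiable and Sard applies to it. This smooth dependence of $v$ is precisely where the $\rmC^4$ hypothesis on $\calE$ enters: a control on the fourth-order jet of $\calE$ is needed both to differentiate $\rmD^3\tilde{\calE}_t(u)$ in $(t,u)$ and to guarantee smooth extraction of the kernel vector via the implicit function theorem along the corank-$1$ stratum. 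With these ingredients, an inductive stratification — (T1) first, (T2) on the resulting good set, then (T3) — and a final countable-intersection argument localised on a covering of $[0,T]\times\xfin$ by relatively compact charts (since our base is noncompact in $u$) yield the claim.
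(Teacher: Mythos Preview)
The paper does not contain a proof of this theorem: it is stated as a result imported from \cite{genericity} (Theorem~1.3 and Corollary~3.7 therein). The only indication the present paper gives about the argument is the remark following the statement, namely that in the infinite-dimensional setting of \cite{genericity} the exceptional set $N$ is merely meagre, whereas in the finite-dimensional situation considered here the classical Sard Theorem upgrades this to a set of zero Lebesgue measure.

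Your outline is therefore not comparable to a proof in the paper, but it is broadly consistent with what the paper's remark suggests: a parametric transversality/Sard-type argument in which the parameters $(y,\mathscr{K})$ act, respectively, on the first and second differential of the perturbed energy, and Sard's Theorem yields the full-measure conclusion. Your identification of the $\rmC^4$ hypothesis as the regularity needed to differentiate the map $(t,u,y,\mathscr{K})\mapsto \rmD^3\ene tu[v,v,v]$ (with $v$ the smoothly selected kernel direction) is on target and matches the ``technical reason'' alluded to just before the theorem.

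One point in your sketch deserves a caveat. For (T3) you rely on the fact that, although the quadratic perturbation does not affect $\rmD^3\tilde{\calE}$, the kernel direction $v$ and the location $(t,u)$ of the degenerate critical point do move with $(y,\mathscr{K})$, so that the composite scalar $\rmD^3\ene tu[v,v,v]$ varies. This is the right mechanism, but it is not automatic that this induced variation is \emph{surjective} enough to make the scalar map a submersion on the relevant stratum; one must actually compute the derivative of the composite with respect to $(y,\mathscr{K})$ along the corank-one locus and check it does not vanish identically. In \cite{genericity} this step is carried out explicitly (and is in fact the heart of the argument). Your sketch names the issue but stops short of verifying it, so as written it is a plausible plan rather than a proof.
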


Let us mention that in \cite{genericity}
a similar result (cf.\ \cite[Cor.\ 3.7]{genericity})   
is proved in a more general,
infinite-dimensional setting, with  perturbations
of the same form as \eqref{lo-perturbed-2},
fulfilling an infinite-dimensional version
of the transversality conditions. 
Such perturbations are constructed by means of elements 
$(y,\mathscr{K})\in(\xfin{\times}{\rm Sym}(\xfin))\setminus N$, where
$N$ is in general only a \emph{meagre} subset of 
$\xfin\times{\rm Sym}(\xfin)$. 
In the present finite-dimensional context,
$N$ meagre improves to an $N$ with zero Lebesgue measure,
due to the classical Sard's Theorem.


Concerning the {\L}ojasiewicz inequality, we are now
going to point out its connections with the 
concept of \emph{subanaliticity}. 
For the reader's convenience, let us first recall the definition of 
\emph{subanalytic} function, referring to 
\cite{Bierstone-Milman, Dries-Miller, Lojasiewicz1} for all details,
and to the recent  
\cite{BoDaLe} for the proof of the result that will be used in what follows.

\begin{definition}
\label{def-subanalitic}
\begin{enumerate}
\item A subset $A\subset \R^d$ is called \emph{semianalytic} if for every $x\in \R^d$ there exists a neighborhood $V$ such that
\begin{equation}
\label{semi-anal}
A\cap V = \cup_{i=1}^p \cap_{j=1}^q \{ x \in V \, : \ f_{ij}(x) =0, \ g_{ij}(x) >0 \},
\end{equation}
where for every $1\leq i \leq p$ and $1\leq j \leq q$ the functions $f_{ij},\, g_{ij} : V \to \R$ are analytic.
\item
We call  a set $A\subset \R^d$ \emph{subanalytic} if every $x\in \R^d$ admits  a neighborhood $V$ such that
there exists $B \subset \R^d \times \R^m$, for some $m\geq 1$, with
\begin{equation}
\label{sub-anal}
A\cap V = \pi_1 (B)  \text{ and }  B \text{ is a bounded semianalytic subset of $\R^d \times \R^m$},
\end{equation}
$\pi_1:  \R^d \times \R^m \to \R^d$ denoting the projection on the first component.
\item We say that a function $E: \R^d \to (-\infty,+\infty]$ is \emph{subanalytic}  if its graph is a subanalytic subset of $\R^d \times \R$.
\end{enumerate}
\end{definition}
As the above definition shows, the concept of \emph{subanalytic function} has a clear geometric character.
Without entering into details, let us recall that 
a fundamental example of subanalytic sets (hence of subanalytic functions)
is provided by \emph{semialgebraic sets}, i.e.\ sets $A \subset \R^d$ of the form
\begin{equation}
\label{semialgebraic}
A= \cup_{i=1}^p \cap_{j=1}^q \{ x \in V \, : \ f_{ij}(x) =0, \ g_{ij}(x) >0 \} \qquad \text{with }  f_{ij},\, g_{ij} : \R^d \to \R \text{ \emph{polynomial} functions} \end{equation}
for all $1\leq i \leq p$ and $1\leq j \leq q$.

We now consider for the functional $\calE$ 
the condition
\begin{equation}
\label{basic-hyp-last-section}
\text{for every\ \ $t\in [0,T]$\ \ the functional\ \ 
$u \mapsto \ene tu$\ \ is subanalytic}.
\end{equation}

To fix ideas, 
we may think of the case in which 
$\ene tu = E(u) - \langle \ell(t), u\rangle$, with $\ell \in \rmC^1([0,T];\xfin)$
and $E : \xfin\to \R$ of class $\rmC^1$ and subanalytic.
Thanks to \cite[Thm.\ 3.1]{BoDaLe}, 
for every $t\in [0,T]$, $\ene t{\cdot}$ complies with the 
{\L}ojasiewicz inequality \eqref{true-Loja}.
All in all, also in view of this result,
we can state the following theorem. 

\begin{theorem}
In the setting of \eqref{Ezero}--\eqref{P_t},
assume in addition the subanalyticity 
\eqref{basic-hyp-last-section},
and that $\cE\in \rmC^3([0,T]\times\xfin)$ fulfills the transversality conditions.
Consider a sequence 
$(u_{\eps_n}^0)_n$ of initial data for  \eqref{e:sing-perturb} such that 
\begin{equation*}
u_{\eps_n}^0 \to u_0 \quad \text{as } n\to\infty.
\end{equation*}
Then there exist a (not relabeled) subsequence and a curve 
$u \in \mathrm{B}([0,T];\xfin)$ such that 
convergences \eqref{3converg_var}--\eqref{4converg_var} hold,
$u(0)=u_0$, and $u$ is a Balanced Viscosity solution to \eqref{e:lim-eq}.
\end{theorem}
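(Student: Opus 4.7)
The plan is to verify that the hypotheses of this theorem imply all the assumptions \eqref{Ezero}--\eqref{strong-critical} and \eqref{Loja} of Theorems \ref{mainth:1} and \ref{mainth:2}, and then simply invoke those results. Since  \eqref{Ezero}--\eqref{P_t} are assumed directly, the only items to check are \eqref{strong-critical} and \eqref{Loja}.

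First, I would verify \eqref{strong-critical}. Since $\calE\in \rmC^3([0,T]\times \xfin)$ fulfills the transversality conditions of Definition \ref{def:transv}, Proposition \ref{prop:7.3} applies and yields that for every $t\in [0,T]$ the critical set $\calC(t)$ consists of isolated points, which is precisely \eqref{strong-critical}.

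Next, I would verify \eqref{Loja}. By assumption \eqref{basic-hyp-last-section}, the map $u\mapsto \ene tu$ is subanalytic for every $t\in [0,T]$. Hence \cite[Thm.~3.1]{BoDaLe} applies (pointwise in $t$) and yields, for each fixed $t$ and each $u\in\calC(t)$, an exponent $\theta\in (0,1)$, constants $C,R>0$ such that the {\L}ojasiewicz inequality \eqref{true-Loja} holds on $B_R(u)$. Then, as observed in Remark~\ref{rmk:Loja}, this implies condition \eqref{Loja}.

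With \eqref{Ezero}--\eqref{strong-critical} and \eqref{Loja} all in force, I would conclude in two steps. First, applying Theorem \ref{mainth:1} to the sequence $(u_{\eps_n})_n$ of solutions to \eqref{e:sing-perturb} with initial data $(u_{\eps_n}^0)_n$ converging to $u_0$, I obtain, up to a not-relabeled subsequence, a curve $u\in \mathrm{B}([0,T];\xfin)$ with $u(0)=u_0$ satisfying the convergences \eqref{3converg_var}--\eqref{4converg_var}, and such that $u$ is a Dissipative Viscosity solution of \eqref{e:lim-eq}. Second, applying Theorem \ref{mainth:2} (whose additional hypothesis \eqref{Loja} has just been verified) to this $u$, I conclude that its associated defect measure $\mu$ has zero absolutely continuous and Cantor parts, so that $u$ is in fact a Balanced Viscosity solution to \eqref{e:lim-eq}. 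There is no main obstacle: the entire content of the statement is the combination of the two previously established genericity/subanalyticity facts (Proposition \ref{prop:7.3} and \cite[Thm.~3.1]{BoDaLe}) with the two main theorems of the paper.
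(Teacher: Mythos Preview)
Your proposal is correct and follows exactly the same approach as the paper: verify \eqref{strong-critical} via Proposition \ref{prop:7.3} (transversality), verify \eqref{Loja} via the {\L}ojasiewicz inequality obtained from subanalyticity through \cite[Thm.\ 3.1]{BoDaLe} and Remark \ref{rmk:Loja}, then apply Theorems \ref{mainth:1} and \ref{mainth:2} in succession. There is nothing to add.
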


This result is a consequence of the fact that,
thanks to Proposition \ref{prop:7.3},
all the hypotheses of Theorem \ref{mainth:1} are in force,
and therefore the statement holds true
with $u$ being a Dissipative Viscosity solution to 
\eqref{e:lim-eq}.
Moreover, due to  the {\L}ojasiewicz inequality \eqref{true-Loja}, 
which is implied by \eqref{basic-hyp-last-section},
the Dissipative Viscosity solution $u$ improves
to a Balanced Viscosity solution 
in view of Theorem \ref{mainth:2}  
(cf.\ also Remark \ref{rmk:Loja}).


\end{document}